\definecolor{mygray}{gray}{0.85}
\renewcommand{\leq}{\leqslant}
\renewcommand{\geq}{\geqslant}
\newcommand\myrestriction{\mathord\restriction}
\newcommand{\Ascr}{\mathcal A}
\newcommand{\Bscr}{\mathcal B}
\newcommand{\Dscr}{\mathcal D}
\newcommand{\Gscr}{\mathcal G}
\newcommand{\Uscr}{\mathcal U}
\def\abar{\mbox{\boldmath $a$}}
\def\bbar{{\bf b}}
\def\dbar{{\bf d}}
\def\ebar{{\bf e}}
\def\hbar{{\bf h}}
\def\xbar{{\bf x}}
\def\aut{\rm aut}
\def\icl{\rm icl}
\def\cl{\rm cl}
\def\tp{\rm tp}
\def\acl{\rm acl}
\def\subsection{\@startsection{subsection}{3}%
  \z@{.5\linespacing\@plus.7\linespacing}{.3\linespacing}%
  {\bfseries\centering}}
\def\subsubsection{\@startsection{subsubsection}{3}%
  \z@{.5\linespacing\@plus.7\linespacing}{.3\linespacing}%
  {\centering}}
\def\myfnt{\ifx\protect\@typeset@protect\expandafter\footnote\else\expandafter\@gobble\fi}
\newtheorem{theorem}{Theorem}[section]
\newtheorem{corollary}[theorem]{Corollary}
\newtheorem{axiom}[theorem]{Axiom}
\newtheorem{definition}[theorem]{Definition}
\newtheorem{lemma}[theorem]{Lemma}
\newtheorem{example}[theorem]{Example}
\newtheorem{question}[theorem]{QUESTION}
\newtheorem{fact}[theorem]{Fact}
\newtheorem{remark}[theorem]{Remark}
\newtheorem{notation}[theorem]{Notation}
\newtheorem{assumption}[theorem]{Assumption}
\newtheorem{definition/fact}[theorem]{Definition/Fact}
\def\bU{\mbox{\boldmath $U$}}
\def\bF{\mbox{\boldmath $F$}}
\def\bF{\mbox{\boldmath $F$}}
\def\bJ{\mbox{\boldmath $J$}}
\def\bK{\mbox{\boldmath $K$}}
\def\bL{\mbox{\boldmath $L$}}
\def\bbL{\mbox{\boldmath $\hat{L}$}}
\def\b1L{\mbox{\boldmath ${L}_{-1}$}}
\def\Fscr{\mathcal F}
\newcommand{\pureindep}[1][]{%
  \mathrel{
    \mathop{
      \vcenter{
        \hbox{\oalign{\noalign{\kern-.3ex}\hfil$\vert$\hfil\cr
              \noalign{\kern-.7ex}
              $\smile$\cr\noalign{\kern-.3ex}}}
      }
    }\displaylimits_{#1}
  }
}
\newcommand{\indep}[2]{%
  \mathrel{
    \mathop{
      \vcenter{
        \hbox{%
\oalign{
\noalign{\kern-.3ex}\hfil$\vert$\hfil\cr
              \noalign{\kern-.7ex}
              $\smile$\cr\noalign{\kern-.3ex}
}
}
      }
}^{\!\!\!\!\!#2}_{\!\!\hspace{-0.1em}#1}
  }
}
\date{\today}
\begin{document}
\title{Strongly minimal Steiner Systems III: Path graphs and sparse configurations}

\author{John T. Baldwin
\\University of Illinois at Chicago\\
}
\thanks{Research partially supported by Simons
travel grant  G3535.}

\renewcommand{\shorttitle}{Steiner Systems: Path graphs and Sparse Configurations}

\begin{abstract} We introduce  a uniform method
of proof for the following results.   For {\em each} of the following
conditions, there are $2^{\aleph_0}$ families of Steiner systems,
satisfying that condition:  i) Theorem~\ref{getsparse}: (extending
\cite{Chicoetal}) each Steiner triple system is $\infty$-sparse and has a
uniform but not perfect path graph; ii) (Theorem~\ref{restrictk0}:
(extending \cite{CameronWebb}) each Steiner $k$-system (for $k=p^n$) is
$2$-transitive and has a uniform path graph (infinite cycles only); iii)
Theorem~\ref{omitmitre}: (extending \cite{Fujiwaramitre}, each is
anti-Pasch (anti-mitre); iv) Theorem~\ref{getsmquasigrp} has an explicit
quasi-group structure. In each case  all members of the family satisfy the
same complete strongly minimal theory and it has $\aleph_0$ countable
models and one model of each uncountable cardinal.
\end{abstract}

\keywords{{\bf Keywords}: Steiner $k$-system; quasi-group;  strongly minimal;
cycle (path) graph; anti-Pasch; 2-transitive }
%
%

\maketitle

We extend various properties of finite Steiner systems to infinite systems.
We take from model theory  the practice of constructing families of
structures (models of a particular theory) satisfying the goal property and
take properties of interest from both model theory and combinatorics. We are
able to both obtain examples of known phenomena of infinite Steiner systems
that satisfy additional very strong model theoretic constraints and adapt
certain concepts from Steiner triple systems to Steiner $q$-systems for prime
power $q$.

We extend recent work by Barbina and Casanovas in model theory
\cite{BarbinaCasa} and by Horsley and Webb in
 combinatorics \cite{HorsleyWebb}
 that centers on the
 constructions of Steiner triple systems (Section~\ref{homconfus})   by
giving some applications  of the Hrushovski construction of strongly minimal
sets. Our subject  differs from the Fra{\" \i}ss\'{e} case because the finite
structures must be `strong' substructures of the generic (alias, limit) model
and we can vary the meaning of strong as discussed in Notation~\ref{hruclass}
and Section~\ref{homconfus}.   In contrast to the model theoretically complex
locally finite generics that arise from Fra{\" \i}ss\'{e} construction
\cite{BarbinaCasa}, the construction techniques here give theories that are
strongly minimal, the geometric building blocks of model theoretically tame
structures.

More significantly from a combinatorial standpoint, we systematically extend
these model theoretic methods to the study  of infinite Steiner $p^k$-systems
with such properties considered for finite Steiner {\em triple} systems (STS)
as anti-Pasch, sparseness, quasigroup structure, and cycle graphs.
Section~\ref{bg} sketches background information on the Hrushovski
construction and its relation to other generalization of the
 Fra{\" \i}ss\'{e} method. We reformulate (Section~\ref{omcon}) the
Cameron-Webb notion of sparse configurations \cite{Chicoetal,Fujiwaramitre}
in terms of the $\delta$-function fundamental to the Hrushovski construction
and give uniform accounts of the existence  in every infinite cardinality of
anti-Pasch, anti-mitre and indeed $\infty$-sparse Steiner triple systems
 (Corollaries \ref{omitPasch} and \ref{omitmitre} and
Theorem~\ref{getsparse}).
 While the examples of strongly
minimal pure Steiner $k$-systems, $(M,R)$,  with $k >3$ admit no definable
`truly binary' operation with infinite domain
\cite{BaldwinVer,BaldwinsmssII}, we construct (Theorem~\ref{getsmquasigrp})
strongly minimal quasigroups which induce $q$-Steiner systems (line length
$q$)  for $q$ a prime power. This is our first extension from $3$ to $p^n$.
In contrast to \cite{HorsleyWebb} rather than omitting appropriate sets $\bF$
of finite systems, we require the quasigroup determining the Steiner system
to be in a fixed variety of quasigroups \cite{GanWer2}.

Our second extension (Section~\ref{smba}) from $3$ to $p^n$  moves the notion
of an $(a,b)$-cycle graph $G_M(a,b)$ of an infinite STS \cite{CameronWebb} to
{\em path-graphs} of $q$-Steiner systems induced by quasigroups.
Section~\ref{pg} gives the rather complicated definition of a path graph. We
then lift properties of chains in infinite STS \cite{CameronWebb} to
$q$-Steiner systems. We give examples where all the path graphs over
algebraically closed sets are infinite (Lemma~\ref{disjointness}) and the
systems are decomposed as unions of `fans', Lemma~\ref{fancover} which
generalizes the decomposition by chains in the triple system case.

Rather than {\em ad hoc} examples, we provide a method to construct first
order theories and thus infinite families of countable models exhibiting
designated combinatorial properties. The countable models of these strongly
minimal theories are arranged in a {\em tower}, a countable increasing
sequence $\langle M_i: i<\omega\rangle$ with $M_n \prec M_{n+1}$. The
structure of $G_{M_0}(a,b)$ depends heavily on whether $\acl_{M_0}(\emptyset)
=\emptyset$. In various cases $G_{M_0}(a,b)$ may have only finite cycles,
only infinite cycles or a mixture. Lemma~\ref{apmu2} constructs a class of
$q$-Steiner
systems where all paths  are infinite. 

In Section~\ref{unif}, we generalize the construction by Cameron and Webb of
uniform cycle graphs in STS in \cite{CameronWebb} to $q$-Steiner graphs.  For
this we need to find $2$-transitive Steiner systems. Although the only finite
2-transitive STS are $PG(d, 2)$ and $AG(d, 3)$ \cite{KeyShult},  in every
infinite cardinality we point to $2$-transitive and so uniform $3$-Steiner
systems (Fact~\ref{Huniform}) and construct $2$-transitive $p^n$-Steiner
systems for every prime power (Theorem~\ref{restrictk0}).
 For this we must alter  the parameters
for a Hrushovski construction that are described in Notation~\ref{hruclass}.
We ask questions that depend on our construction at various places in the
text. But we conclude in Section~\ref{questions} by raising several questions
which should require more combinatorial methods.



In the remainder of the introduction we give further context and background.
The Barbina-Casanovoas examples  \cite{BarbinaCasa} are extremely
complex\footnote{While it is extremely likely that the theories of
\cite{HorsleyWebb} have similar complexity, that has not been worked out.}
from the viewpoint of the stability hierarchy ($TP_2$ and $NSOP_1$), while
strongly minimal are the simplest; algebraic closure imposes a matroid
structure on each model. A first order theory $T$ is strongly minimal if
every definable subset of every model of $T$ is finite or co-finite. Three
prototypical examples are the theories of: the integers with successor,
rational addition, and the complex field.
 Zilber conjectured these examples were canonical; each
such geometry was discrete, vector space like, or field like.  Hrushovski
refuted this conjecture by an intricate extension of Fra{\" \i}ss\'{e}'s
construction of countable homogeneous universal models. The resulting
`generic' model is `less' homogeneous that those built by  Fra{\" \i}ss\'{e}.
By use of a function $\delta$, a class of finite {\em strong} substructures
is obtained and only isomorphisms among them are required to extend.


 A
{\em linear space} is collection of points and lines such that two points
determine a line, a minimal condition to call a structure a geometry. A
linear space is a Steiner $k$-system if every line (block) has cardinality
$k$. We showed in Section 2 of \cite{BaldwinPao} that linear spaces can be
naturally formulated in a one-sorted logic with single ternary `collinearity'
predicate and proved the existence of strongly minimal Steiner $q$-systems
for every prime power. These theories are model complete and satisfy the
usual properties of
 counterexamples to Zilber's trichotomy conjecture: Their $\acl$-geometries are
 non-trivial, not locally modular, and the theory cannot interpret an infinite group.

%
Much of the history of Steiner systems interacts with the general study of
non-associative algebraic systems such as quasigroups. A quasigroup is a
structure with a single binary operation whose multiplication table is a
Latin Square (each row or column is a permutation of the universe)
\cite{Steinpnas}.
 Drawing on universal algebra and combinatorics,
 we \cite{BaldwinsmssII} found that the restriction  to prime power
 cardinality of the universe
 that is essential
 for the
existence of quasigroups coordinatizing finite $q$-Steiner systems is
replaced by prime power block length  for (necessarily infinite) strongly
minimal Steiner systems.

The III in the title indicates  the heavy reliance for details on
\cite{BaldwinPao,BaldwinVer,BaldwinsmssII}.
 The novelty here is that applying
 these methods to combinatorial issues requires new
  changes in the parameters of the construction.
We acknowledge helpful discussions with Joel Berman, Omer Mermelstein,
Gianluca Paolini, and Viktor Verbovskiy.

    \section{Background}\label{bg}
     \numberwithin{theorem}{subsection}

Hrushovski's `flat geometries' \cite[Definition 6.2]{BaldwinPao}
have generally been regarded from the standpoint of their creation:  as an
undifferentiated class of pathological structures designed as
counterexamples. However, there are ternary fields, Steiner systems and
quasigroups are among them. \cite{Baldwinasmpp, BaldwinPao, BaldwinsmssII,
BaldwinVer} shows that structural distinctions arise by fixing a class $\bU$
 of permissible choices for  the function $\mu$.  In fact, the family of `Hrushovski constructions of
strongly minimal sets' depend  on five parameters. We list them here for
reference; we demonstrate below that modifying  these parameters can produce
strikingly different behavior.
%


\begin{notation}\label{hruclass}{\rm
A quintuple $(\sigma,\bL^*_0, \bL_0,\epsilon,{\bf U})$ determines a {\em
Hrushovski sm-class}. $\bL^*_0$ is a collection of finite structures in a
vocabulary $\sigma$, not necessarily closed under
substructure\footnote{$\bL^*$ contains $\sigma$ structures of arbitrarily
cardinality.
$\bL^*_0$  was closed under substructure in \cite{Hrustrongmin} but not
here.}. $\epsilon$ is a function from a specified collection of finite
$\sigma$-structures to natural numbers satisfying the conditions imposed on
$\delta$ in Definition~\ref{defdelrank}. $\bL_0$ is a subset of $\bL^*_0$
defined using $\epsilon$. From such an $\epsilon$, one defines notions of
$\leq$, primitive extension, and good pair. Hrushovski gave one technical
condition on the function $\mu$ counting the number of realizations of a good
pair that ensured the theory is strongly minimal rather than $\omega$-stable
of rank $\omega$.  Fixing a class ${\bf U}$ as the collection of functions
$\mu$ satisfying a specific condition provides a way to index a rich group of
distinct constructions. As  explained in Definition~\ref{yax}, from
$\bL_0,\epsilon$ and $\mu \in {\bf U}$, one defines an amalgamation class
$(\bL_\mu,\leq)$  of finite structures and an associated class of infinite
structures $\hat L_\mu$
(For any collection $\bL$ of finite structures, we write
$\hat \bL$  for the
collection of direct limits of structures in $\bL$.).   
Thus, one obtains a strongly minimal theory  $T_\mu$ of a generic structure
$\Gscr_\mu$, that describes the `existentially closed' members of
 ${\hat \bL_\mu}$.}
\end{notation}

We show how modifications of the most basic Hrushovski construction provides
examples of Steiner systems.
 \cite[2.1, 2.2]{BaldwinPao} summarises the role of strongly
minimal sets in model theory and the bi-interpretability of a one-sorted
(used here) and two-sorted approach to Steiner system. \cite{Baldwinfg}
provides a somewhat outdated survey of vastly wider study of modifications of
the construction to study e.g. fusions, `bad' fields, Spencer-Shelah random
graphs and higher levels of stability classification.
 In
Section~\ref{homconfus} we sketch the relation between the combinatorial and
model theoretic literature. Section~\ref{hrucon} outlines the general setting
of the so-called {\em ab initio} Hrushovski construction, generated from a
collection of finite structure, emphasizing the parameters that can be varied
to get specific behaviors. Remark~\ref{hrusp} reminds us of the original
context; Section~\ref{linsp} lays out the notation for studying linear
spaces.

For convenience, one usually specifies in $\bL^*$ that the relations are
symmetric; but to reach important cases such as linear spaces, quasigroups,
and Steiner systems one adds the relevant axioms to this starting point. We
axiomatize $\bL^*$ with $\forall\exists$ sentences to create quasigroups.
Working in linear spaces with a `geometric' $\epsilon$ in \cite{Paoliniwstb}
is vital to obtain Steiner systems. In this paper, to obtain Steiner systems
which are (e.g. anti-Pasch, $\infty$-sparse, $2$-transitive) we both vary the
class $U$ of admissible $\mu$-functions and change the way that the class of
finite structures $\bL_0$ is determined by the relevant $\delta$ playing the
role of $\epsilon$.

\subsection{Constructing `Generic' models}\label{homconfus}

The constructions in \cite{BarbinaCasa,HorsleyWebb} and in this paper are
related generalizations of Fra{\" \i}ss\'{e}'s construction of a generic
model $M$ from a collection $\bL_0$   of finite structures in a {\em finite
relational vocabulary}  that is closed under substructure. A {\em generic
model} for a class $\bJ$ of finite structures is one that is homogeneous and
embeds all members of $\bJ$. Both `homogeneous' and `embed' change with the
author. In the Fra{\" \i}ss\'{e} setting $\bJ$ is the set of substructures
(as in the next paragraph) of $M$.

For ease in following references, I use the following model theoretic
terminology.
 A
{\bf substructure} $B$ of a structure $A$ in a vocabulary $\sigma$ (list of
function and relation symbols)  is a $B\subseteq A$ that is i) closed under
the function symbols in $\sigma$ and ii) each $n$-ary relation $R^B$ is $R^A
\cap B^n$.


 A structure $A$ is (finitely) {\em ultra-homogeneous\footnote{This model
  theoretic usage dates from \cite{Woodrow4ultra}.}} if
every isomorphism between finitely generated substructures of $A$ extends to
an automorphism of $A$.   This term corresponds to `homogenous' in \cite[p.
2]{HorsleyWebb} (as noted there). I use homogenous in the usual model
theoretic sense:  a structure $A$ is (finitely) {\em homogeneous} if any two
sequences $\abar$ and $\bbar$ of length $n<\omega$ that satisfy the same
$n$-ary first order formulas are automorphic in $A$. This notion appears in
an essential way in the proof of Lemma~\ref{gettrans}.

 Fra{\" \i}ss{\'e}
constructed  ultrahomogeneous, quantifier eliminable, and
$\aleph_0$-categorical structures in {\em finite relational vocabularies}.
His crucial hypotheses were joint embedding, amalgamation, and closure under
substructure.  As the construction was generalized to other notions of
substructure and possibly infinite vocabularies or with function symbols, two
hypotheses that were hidden by the finite relational vocabulary hypothesis
became evident: uniform local finiteness \footnote{In model theoretic terms,
a structure is locally finite if every finitely generated substructure is
finite (uniformly if there is a function $f$ such that an $n$-generated
structure has less than $f(n)$ elements). In \cite{HorsleyWebb} a structure
$M$ is called finitely generated with respect to a class $\bK$ of finite
structures if every finite subset of $M$ is contained in a member of $\bK$.
Our generic is locally finite in that sense but not in the model theoretic
sense.  While the generic in \cite[\S 4]{BarbinaCasa} is locally finite in
both senses.
Consider the infinite $3$-generated chains
in Section~\ref{pg} and the proof that $T^*_{\rm sq}$ is not small in
\cite[Theorem 3.3]{BarbinaCasa}.} (for $\aleph_0$-categoricity) and only
countably many finite structures (for the generic to be countable).

The three amalgamation constructions discussed here can best be compared in a
more abstract framework. Consider a countable collection $(\bL^*_0,\leq)$ of
finite structures where $\leq$ is a partial order refining substructure and
$\bL^*_0$ is defined by a collection of first order (usually universal)
sentences in the vocabulary of $\bL^*_0$. $M$ is $\leq$-homogeneous if $A,B
\leq $ implies they are automorphic. And $M$ is $\bL_0$-universal if there is
an isomorphism $F:A\rightarrow M$ with $f(A) \leq M$.  Sufficient conditions
are given so there is a structure $M$ which is $\leq$-homogeneous and
universal for $\bL^*_0$. There is no requirement that the language is
relational.  Each of the three constructions discussed here interpret $\bJ$
and $\leq$ in a different way.

\cite[\S 4]{BarbinaCasa} takes $\bL^*_0$ as the class of finite STS and
$\leq$ as substructure.
They construct a `generic' (a Fra{\" \i}ss{\'e} limit), which is a prime
model of their separately constructed $T^*_{\rm sq}$, the model completion of
the theory of all Steiner quasigroups.

\cite[Theorem 1]{HorsleyWebb}  generalizes this situation by taking $\bL^*_0$
as the class of `good $\bF$-free structures' (omit a collection $\bF$ of
finite nontrivial STS) finite triple systems.
The key distinction from our work is that those authors restrict their
amalgamation class to Steiner triple systems and use  or prove the
combinatorial fact that finite partial Steiner triple systems extend to
finite Steiner triple systems in the fixed $\bL^*_0$.

 In contrast,
we prove amalgamation by applying a general procedure due to Hrushovski to an
ambient class of finite linear spaces, bound line length by the
$\mu$-function (so partial Steiner systems), and obtain uniform line length
by  the `everything that can happen does'
mantra of amalgamation constructions. It is routine \cite[Section
2.1]{BaldwinPao} that strongly minimal linear spaces have bounded line-length
and cofinitely many lines have the same length; the existential completeness
of the generic model implies all lines have this maximal length
%

  We introduce the class $\bL_0
\subseteq \bL^*_0$ which is defined by properties of a pre-dimension function
$\delta$ and then further restrict with an `algebrizing function' to
$\bL_\mu$. Both $\delta$ and $\mu$ limit membership in $\bL_\mu$ to obtain
strongly minimal Steiner $q$-systems and in Section~\ref{consquasi}
$\bK^q_{\mu',V}$ for strongly minimal quasigroups that induce such systems.
Crucially, in that section we drop the requirement that $\bL^*_0$ is closed
under substructure.

For \cite{BarbinaCasa}, the generic model is prime but there is no countable
saturated model and the theory is not stable.  Our generic is saturated (so
model theoretically homogeneous) and the theory is strongly minimal (in
particular, $\omega$-stable).  But our generics are not ultrahomogeneous but
only $\leq$-homogeneous. In \cite{BarbinaCasa}, $T^*_{Sq}$ is the model
completion of theory of all Steiner quasigroups. Our
  theories $T_\mu$ are model complete. But they are not the model
completion of the $\bL_0$ in \cite{BarbinaCasa}; our $\bK^q_V$
(Section~\ref{consquasi}) is a much restricted class of finite quasigroups.
The generics of \cite{HorsleyWebb,BarbinaCasa} are locally finite; ours are
not.

%
Thus, from a model theoretic standpoint the strong minimality\footnote{The
easier $\omega$-stable step of the Hrushovski construction does not yield
Steiner systems.} distinguishes our example; while {\em from the
combinatorial standpoint the extension from $3$ to $p^n$-Steiner systems is
central} is the main novelty.

\subsection{The Hrushovski framework}\label{hrucon}

The basic ideas of the Hrushovski construction are i) to modify the Fra{\"
\i}ss\'{e} construction by replacing substructure by a notion of strong
substructure, defined using a predimension $\delta$
(Definition~\ref{defdelrank}) so that independence with respect to the
dimension induced by $\delta$ is a combinatorial geometry\footnote{The
requirement that the range of this function is well-ordered is essential to
get the exchange property in the geometry; using rational or real
coefficients yields a stable theory and the dependence relation of forking
\cite{BaldwinShiJapan}.} and ii) to employ an {\em algebrizing function}
$\mu$ to bound the number $0$-primitive extensions of each finite structure
so that closure in this geometry is algebraic closure\footnote{In model
theory $a \in \acl_M(B)$ if there is a formula  and $\bbar \in B$ with
$\phi(a,\bbar)$ and $\phi(x,\bbar)$ has only finitely many solutions in
$M$.}.

 A Steiner $(t,k,v)$-system is a pair $(P,B)$ such that $|P| = v$, $B$ is a
     collection of $k$ element subsets of $P$ and every $t$ element subset
     of $P$ is contained in exactly one block.  Since we are primarily
     interested in infinite structures, we omit the $v$ unless it is
     crucial and so, by Steiner $k$-system  I mean Steiner $(2,k)$ system of arbitrary cardinality.
A {\em groupoid}
    (also called a {\em magma})  is a structure
$(A,*)$ with one binary function $*$.


Unfortunately, while the extensive literature on Hrushovski constructions
contains the same fundamental notions related in a fairly standard way, the
notation is not standard. So we quickly list our terminology.

We give an abstract formulation of the construction of generic model due to
\cite{KuekerLas}.  This provides a common framework for the Fra{\" \i}ss\'{e}
and Hrushovski constructions which does {\em not} require the class $\bL$ to
be closed under substructure and is essential in Section~\ref{consquasi}. For
the general discussion in this section we work in a finite relational
vocabulary $\sigma$.

\begin{notation}

\label{not0}

\begin{enumerate}
\item For any class $\bL$ of finite structures, $\hat \bL$ denotes the
    collection of structures of arbitrary cardinality that are direct
    limits\footnote{If $\bL_0$ is closed under substructure so is $\hat
    \bL_0$ and  $\hat \bL_0$ is axiomatized by a universal sentence in
    $L_{\omega_1,\omega}$ ($L_{\omega,\omega}$ if the vocabulary is
    relational.).} of models in $\bL$.
\item Let $\sigma$ be a finite relational vocabulary. A class $(\bL_0,
    \leq)$ of finite structures, with a transitive relation $\leq$ on
    $\bL_0 \times \bL_0$ is called {\em smooth} if $B\leq C$  implies
    $B\subseteq C$ and for all $B\in \bL$ there is a collection
    $p^B(\xbar)$ of universal formulas with $|\xbar| = |B|$ and for any
    $C\in \bL_0$ with $B\subseteq C$,

$$B\leq C \leftrightarrow C \models \phi(\bbar)$$
for every $\phi \in P^B$ and $\bbar$ enumerates $B$.

We write $B$ is strongly embedded in $C$ if an isomorphic image $B'$ of $C$
satisfies $B'\leq C$.

\item A structure $A$ is a $(\bL_0, \leq)$-union if $A = \bigcup_{n<
    \omega} C_n$ where each $ C_n\in \bL_0$    and $C_n \leq C_{n+1}$ for
    all $n< \omega$. If $A$ is a $(\bL_0, \leq)$-union, $B \subseteq A$, $B
    \in \bL_0$, we say $B\leq A$ if $B \leq C_n$ for all sufficiently large
    $n$.
    \item A structure $A$ is an $(\bL_0, \leq)$-{\em generic} or
        $\leq$-{\em homogeneous} if $A$ is a $(\bL_0, \leq)$-union and for
        any $B\leq C$ each in $\bL_0$ and $B\leq A$ there is a
        $\leq$-embedding of $C$ into $A$.
        \end{enumerate}
\end{notation}

$\leq$ is read `strongly embedded'. The crucial fact is:

\begin{fact}[\cite{KuekerLas}]\label{KL} If $(\bL_0,\leq)$ is a smooth class of countably
many finite structures  that satisfies $\leq$-amalgamation and $\leq$-joint
embedding there is a unique countable generic $\Gscr_{\bL_0}$ for
$(\bL_0,\leq)$
\end{fact}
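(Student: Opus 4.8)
The plan is to mimic the classical Fra{\"\i}ss\'e back-and-forth construction, but carried out through the partial order $\leq$ rather than through substructure, exactly as in \cite{KuekerLas}. First I would enumerate, up to isomorphism, the countably many pairs $(B,C)$ with $B\leq C$ both in $\bL_0$; call this list $\langle (B_i,C_i):i<\omega\rangle$ with each pair occurring infinitely often. I would then build an increasing $\leq$-chain $\langle A_n:n<\omega\rangle$ of members of $\bL_0$ whose union is the desired $\Gscr_{\bL_0}$. Start with any $A_0\in\bL_0$ (using $\leq$-joint embedding to guarantee $\bL_0\neq\emptyset$, or just pick one). At stage $n+1$, examine the $n$-th task: if there is a $\leq$-embedding $f:B_n\to A_n$ with $f(B_n)\leq A_n$, use $\leq$-amalgamation over $B_n$ of the two $\leq$-extensions $B_n\leq A_n$ and $B_n\leq C_n$ to obtain $A_{n+1}\in\bL_0$ with $A_n\leq A_{n+1}$ into which $C_n$ $\leq$-embeds extending $f$; otherwise set $A_{n+1}=A_n$. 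Here one must check that the amalgam can be taken finite and inside $\bL_0$ — this is precisely what $\leq$-amalgamation for the smooth class $(\bL_0,\leq)$ delivers — and that $A_n\leq A_{n+1}$, so that $A_n\leq\Gscr_{\bL_0}$ by the definition of $\leq$ on a $(\bL_0,\leq)$-union (Notation~\ref{not0}(3)).

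Next I would verify that $\Gscr_{\bL_0}:=\bigcup_n A_n$ is a $(\bL_0,\leq)$-generic in the sense of Notation~\ref{not0}(4). It is a $(\bL_0,\leq)$-union by construction. For the extension property, suppose $B\leq C$ in $\bL_0$ and $B\leq \Gscr_{\bL_0}$; by the union definition $B\leq A_m$ for some $m$, and since the pair $(B,C)$ (up to isomorphism) recurs on our task list at some stage $n\geq m$, and $B\leq A_m\leq A_n$ gives $B\leq A_n$ via transitivity of $\leq$ together with the smoothness clause (the universal formulas $p^B$ witnessing $B\leq A_n$ are inherited), stage $n+1$ installs the required $\leq$-embedding of $C$ into $A_{n+1}\subseteq\Gscr_{\bL_0}$. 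A small point to be careful about: the embedding $f:B\to A_n$ produced by ``$B\leq A_n$'' must be matched with the enumerated copy of the pair $(B_n,C_n)$, so one should phrase the tasks as ``for every $\leq$-embedded copy of some $B_i$ currently sitting in the chain, realize the corresponding $C_i$,'' and a standard bookkeeping argument (each isomorphism type of pair, together with each of the finitely many $\leq$-embeddings available at a finite stage, is eventually handled) closes this gap.

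Finally, uniqueness. Given two countable generics $\Gscr$ and $\Gscr'$, I would run a back-and-forth argument whose approximations are $\leq$-embedded finite substructures: maintain a finite partial isomorphism $g:A\to A'$ with $A\leq\Gscr$, $A'\leq\Gscr'$. To extend $g$ so as to cover a new point $a\in\Gscr$, choose $A\leq B\leq\Gscr$ with $a\in B$ and $B\in\bL_0$ (such a $B$ exists because $\Gscr$ is a $(\bL_0,\leq)$-union and, by transitivity/smoothness, the closure of $A\cup\{a\}$ inside some $A_n$ of the defining chain is $\leq\Gscr$); then transport $A\leq B$ across $g$ to an abstract extension of $A'$, and use the genericity of $\Gscr'$ — applied to $A'\leq\Gscr'$ and this abstract $\leq$-extension — to find a $\leq$-embedding into $\Gscr'$ agreeing with $g$. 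Symmetrically going back, the union of the chain of partial isomorphisms is an isomorphism $\Gscr\cong\Gscr'$. I expect the main obstacle to be bureaucratic rather than conceptual: ensuring at every stage that the structure one lands in is \emph{finite}, lies in $\bL_0$, and is reached by a \emph{$\leq$}-extension (not merely a substructure extension), and that transitivity of $\leq$ plus the smoothness characterization via the universal formula sets $p^B$ really do let one compose these extensions along the chain. Once that bookkeeping is set up, both existence and uniqueness follow the classical Fra{\"\i}ss\'e template verbatim, which is why \cite{KuekerLas} can state it as a clean black box.
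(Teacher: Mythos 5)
The paper states this as a Fact imported from \cite{KuekerLas} and gives no proof of its own, and your argument is exactly the standard Kueker--Laskowski adaptation of the Fra\"{\i}ss\'e back-and-forth, with the genuine delicate points (iterating each pair over every $\leq$-embedded copy, using transitivity of $\leq$ and the smoothness formulas $p^B$ to propagate strength along the chain, and taking $B=A_n$ for large $n$ in the uniqueness step) correctly identified. The only small adjustment I would make is to the role of joint embedding: it is not needed to see $\bL_0\neq\emptyset$, but rather to start the back-and-forth between two generics (and to get universality), since one must first place a strong copy of $A_0\leq\Gscr$ inside $\Gscr'$ before the extension property can take over.
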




%


\begin{axiom} \label{yax}
Let $\delta$ be a map from a collection of finite $\sigma$-structures
 into $\omega$.
Let ${\bL^*_0}$ be a collection of such structures closed  
under isomorphism.  We write $A\leq B$ if for every $C$ with $A \subseteq C
\subseteq B$, $\delta(C/A) \geq 0$.
  We require that $\bL^*, \hat \bL^*, \delta$
satisfy the following requirements.
First, ${\bf L_{0}} \subsetneq \bf L^*_{0} $ is the collection of finite $B$
such that:

\begin{enumerate}
\item $\delta(\emptyset) = 0$
\item If $B \in \bL_0$ and  $A \subseteq B$ then $\delta(A) \geq 0$.
\item If $A$, $B$, and $C$ are disjoint then $\delta(C/A) \geq
    \delta(C/AB)$.

\item $(\bL_0),\delta)$ admits canonical amalgamations in the following
    sense.

\end{enumerate}

\end{axiom}

\begin{definition}{\bf Canonical Amalgamation}\label{canamdef} For any class $(\bL_0,\epsilon)$,
    if $A\cap B = C$, $C\leq A$ and $A,B, C
    \in \bL^*$,  $G$ is a {\em free (or canonical) amalgamation},  $G = B \oplus_C  A$
    if
      $ G \in
    \bL^*$, $\epsilon(A/BC) = \epsilon(A/C)$ and $\epsilon(B/AC) = \epsilon(B/C)$
     Moreover, $\epsilon( A \oplus_{C} B)= \epsilon(A) +\epsilon(B) -
    \epsilon(C)$ and any $D$ with $C \subseteq D \subseteq A \oplus_{C} B$ is
    also free.  Thus, $B \leq G$.
    \end{definition}

Disjoint union is the canonical amalgamation for the basic Hrushovski
construction and Definition~\ref{defcanam}
  gives the appropriate
notion satisfying Axiom~\ref{yax}.5 for linear spaces. Axiom~\ref{yax}.2 can
be rephrased as: $B\subseteq C$ and $A \cap C = \emptyset$ implies
$\epsilon(A/B) \geq \epsilon(A/C)$; so we can make the following definition.

\begin{definition}\label{defd} Extend $\epsilon$ to $d: {\bbL}_0 \times
{\bL_0}
 \rightarrow \omega$ by for each $N \in {\bbL}_0$ and $A \subset_{<\omega} N$,
$d(N,A) = \inf \{\epsilon(B): A \subseteq B \subseteq_{\omega} N\}$, $d_N(A/B)
= d_M(A \cup B) -d_M(B)$. We usually write $d(N,A)$ as $d_N(A)$ and omit the
subscript $N$ when clear.
\end{definition}


%
%
%
%
%
%
%

What Hrushovski called {\em self-sufficient} closure is in the background.


\begin{definition}\label{strongdef}

	\begin{enumerate}[(1)]
\item For $N \in {\bbL}_0$ and $A \in {\bL}_0$, we say $A\subseteq N$ is
    strong in $N$ and write $A\le N$  if $d(N/A) \geq 0$.

  \item For any $A \subseteq B \in {\bL}^*$, the intrinisic
      (self-sufficient) closure of $A$, denoted $\icl_B(A)$ is the smallest
      superset of $A$ that is strong in $B$.
\end{enumerate}
\end{definition}
Note that in the current situation $\icl(B)$ is finite if $B$ is.
The following definition describes the pairs $B \subseteq C$ such that
eventually $\tp(C/B)$ will be an algebraic set (realized only finitely often).

\begin{definition}\label{prealgebraic} Let $A, B \in {\bL}_0$ with
 $A \cap B = \emptyset$ and $A \neq \emptyset$.
	\begin{enumerate}[(1)]
\item $B$ is a {\em  primitive extension} of $A$ if $A \leq B$ and there is
    no $A \subsetneq B_0 \subsetneq B$ such that $A \leq B_0 \leq B$. $B$
is a {\em  $k$-primitive extension} if, in addition, $\epsilon(B/A) =k$.

 We stress that in this definition, while $B$ may be empty, $A$ cannot be.
	\item  We say that the $0$-primitive pair $A/B$ is {\em good} if there
is no $B' \subsetneq B$ such that  $(A/B')$ is $0$-primitive. (This notion
was originally called a minimal simply algebraic or m.s.a. extension.)
	\item If $A$ is $0$-primitive over $B$ and $B' \subseteq B$ is such that we have that $A/B'$ is good, then we say that $B'$ is a {\em base} for $A$ (or sometimes for $AB$).  
	\item If the pair $A/B$ is good, then we also write $(B,A)$ is a {\em good pair}.
\end{enumerate}
\end{definition}

	\begin{definition}     \label{Kmu}	
	\begin{enumerate}[(1)]
	\item  Let $\Uscr$ be the collection of functions $\mu$ assigning to
every isomorphism type $\boldsymbol{\beta}$ of a good pair $C/B$ in $\bL_0$
a natural number
$\mu(\boldsymbol{\beta}) = \mu(B,C) \geq \epsilon(B)$. 

\item For any good pair $(B,C)$ with $B \subseteq M$ and $M \in \hat
    \bL_0$, $\chi_M(B,C)$ denotes the number of disjoint copies of $C$ over
 $B$ in $M$. A priori,  $\chi_M(B,C)$ may be $0$.

	\item\label{Kmuitem} Let $\bL_\mu$ be the class of structures $M$ in
$\bL_0$ such that if $(B,C)$ is a good pair  $\chi_M(B,C)  \leq
\mu((B,C))$.
\end{enumerate}	
\end{definition}

Up to this point, we have denoted the rank function by $\epsilon$ to indicate
it is being treated entirely axiomatically. {\em We switch to} $\delta$ to
emphasize that (Hrushovski's definition (Definition~\ref{hrusp}) or Paolini's
(Definition~\ref{defdelrank}) may be used but trust to context for the reader
to know which.

\begin{remark} [The basic  Hrushovski construction]     \label{hrusp}
In the original context \cite{Hrustrongmin}, $\sigma$ contains a single
ternary relation $R$ and  $\delta(A) =|A| - r(A)$ where $r(A)$ is the number
of triples $\abar$ from $A$ satisfying $R(\abar)$. $\bL^*$ is all finite
$\sigma$-structures and $\bL_0$ is  those $A\in \bL^*$ with  $\emptyset \leq
A$ and $\Uscr$ is as in Definition~\ref{Kmu} with that $\delta$.
\end{remark}

We have recalled the Hrushovski notion for context but this paper is entirely
about linear spaces.

\subsection{Linear Spaces}\label{linsp}
In this section we outline the adaptation of Remark~\ref{hrusp} that
generates most of the examples in this paper. For the remainder of the paper
we will deal at various times with two vocabularies $\tau$, with a single
ternary relation symbol, $R$, and $\tau' =\{H,R\}$  with a ternary relation
$H$,
which will be the graph of a binary function $*$. 

\begin{definition}\label{defls}
A $\tau$-structure $(M,R)$ is

\begin{enumerate}
\item a {\em $3$-hypergraph} if $R$ holds only of distinct triples and in
    any order.
\item a {\em linear space} if it is a $3$-hypergraph in which two points
    determine a unique line. That is, each pair of distinct points in
    contained in unique maximal $R$-clique (line). That is, all triples
    from the line satisfy $R$.
    \item A  linear space is a {\em $k$-Steiner system} if all lines have
        the same length $k$.
        \end{enumerate}
\end{definition}

Thus our finite structures will in general be partial $k$-Steiner systems
(lines may not have full length) for some $k$. We use the words `block' and
`line' interchangeably and often fail to distinguish when the line has full
length. When this is important, we may write clique to denote a subset of a
line, i.e.,  a maximal clique.

 \begin{definition}\label{lines}
 \begin{enumerate}
 \item  For $\ell \subseteq A$, we denote the cardinality of a clique
     $\ell$ by $|\ell|$, and, for $B \subseteq A$, we denote by $|\ell|_B$
     the cardinality of $\ell \cap B$.
   \item We say that a non-trivial line $\ell$ contained in $A$ is {\em
       based in} $B \subseteq A$ if $|\ell\cap B| \geq 2$, in this case we
       write $\ell \in L(B)$.
\item	\label{nullity} The {\em nullity of a line} $\ell$ contained in a
    structure $A \in \mathbf{K}^*$ is:
	$$\mathbf{n}_A(\ell) = |\ell| - 2.$$
\end{enumerate}
\end{definition}

Now we define our geometrically based pre-dimension function
\cite{Paoliniwstb}.

\begin{definition}\label{defdelrank} We define the appropriate $\bK^*$ and $\bK_0$.
\begin{enumerate}
 \item Every $(A,R) \in  \bK^*$ is a finite linear spaces.

 \item For $(A,R) \in  \bK^*$ let:
	$$\delta(A) = |A| - \sum_{\ell \in L(A)} \mathbf{n}_A(\ell).$$
\item  Moreover $(A,R) \in  \bK_0$ if
for any $ A' \subseteq A, \delta(A') \geq 0\}$. \item $(\bK_0,\delta)$
satisfies the conditions on $\epsilon$  given in  Section~\ref{hrucon}
\end{enumerate}
\end{definition}

The explicit definition of the free amalgamation {\em in this context} is:

\begin{definition}\cite[Lemma 3.14]{BaldwinPao}\label{defcanam} Let $A \cap B =C$
 with $A,B,C \in \bK_0$.
We define $D := A \oplus_{C} B$ as follows:

\begin{enumerate}[(1)]
	\item the domain of $D$ is $A \cup B$;
	\item
a pair of points  $a \in A - C$  and $b \in B - C$ are on a non-trivial
line $\ell'$ in $D$ if and only if there is line $\ell$ based in $C$ such
that $a \in \ell$ (in $A$) and $b \in \ell$ (in $B$). Thus $\ell'=\ell$ (in
$D$).
\end{enumerate}
\end{definition}

%
%
%
%
%

We single out a type of good pair that provides the line-length
invariant for the Steiner systems.

\begin{notation}[Line length]\label{linelength} We write $\boldsymbol{\alpha}$ for the isomorphism
type of the good pair $(\{b_1,b_2\},a)$ with
 $R(b_1,b_2,a)$. 
Lemma 5.18
of \cite{BaldwinPao}) implies lines in models of $T_\mu$ have length $k$ if
and only if $\mu(\boldsymbol{\alpha}) = k-2$.
 \end{notation}

If one restricts the counting functions to $\Uscr$ (Definition~\ref{Kmu}),
Steiner triple systems are excluded. Since they are a key topic, the
$\Uscr$ is slightly altered from Definition~\ref{Kmu} to admit them.

\begin{definition}[$\Uscr^{\rm ls}$] \label{Kmu'}  Let $\Uscr^{\rm ls}$ be the
 collection of functions $\mu$ assigning to every isomorphism type $\boldsymbol{\beta}$
of a good pair $C/B$ in $\bK_{0}$ a number
$\mu(\boldsymbol{\beta}) = \mu(B,C) \geq \delta(B)$. 
	\begin{enumerate}[(i)]
	\item a number $\mu(\boldsymbol{\beta}) = \mu(B,C) \geq \delta(B)$, if $|C-B|\geq 2$;
	\item a number $\mu(\boldsymbol{\beta}) \geq 1$ (rather than $2$), if $\boldsymbol{\beta}) = \boldsymbol{\alpha}$.
\end{enumerate}
\end{definition}




%


\section{Omitting configurations in Steiner triple systems}\label{omcon}
 \numberwithin{theorem}{subsection}

There is a long history of studying finite Steiner triple systems that omit
specific configurations, e.g. Pasch.  The concept is formalized as follows
(\cite{Fujiwaramitre}).

\begin{definition} Let $X$ be finite partial Steiner system.  A Steiner
system $(M,R$) is {\em anti-X} if there no embedding of $X$ into $M$.
\end{definition}

 The notion of an $\infty$-sparse
system uniformizes these anti-$x$ constructions \cite{Fujiwara,Chicoetal}. We
derive such results for infinite Steiner triple systems by variants on our
general construction. We first find specific amalgamation constructions that
give the strongly minimal Steiner systems omitting target configurations  by
varying the class $U$ of acceptable bounds on algebraicity. We next obtain
$\infty$-sparseness in Section~\ref{spconfig}, by enforcing the uniformity
with $\delta$ and with more drastic restrictions on the class $\bK_{0}$.

\subsection{Anti-Pasch and Anti Mitre}\label{antipa}

 \begin{figure}[ht]\label{Paschdia}
 \begin{center}
\includegraphics[height=2in]{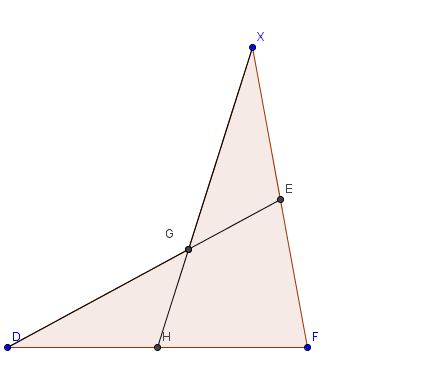}
\caption{Pasch configuration}
\end{center}
\end{figure}

We begin by examining the connection between the Pasch configuration
\cite{Fujiwara} and the group configuration from model theory.
Diagram~\ref{Paschdia} is known in the study of Steiner triple systems as the
{\em Pasch configuration}. This same diagram, interpreting the lines as
representing algebraic closure, is known to model theorists as  the {\em
group configuration}: in that context the $\acl$-dimension of the set of 6
points is 3; any triple of non-collinear points are independent; each point
has $\acl$-dimension 1, and each line has $\acl$-dimension 2. Hrushovski's
proof, described for the Steiner system case in \cite[Corollary
6.3]{BaldwinPao}, that no $T_\mu$ interprets an infinite group originated the
model theoretic argument that the group configuration in the algebraic
closure geometry implies the existence of a definable infinite group.
We give a more direct argument for:

\begin{fact}\label{noinfsubgr} The strongly minimal quaigroups
whose existence is proven in Section~\ref{consquasi}, have no infinite
definable {\em associative} subquasigroup.
\end{fact}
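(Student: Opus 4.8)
The plan is to argue by contradiction, extracting from a hypothetical infinite definable associative subquasigroup a copy of the group (= Pasch) configuration of Diagram~\ref{Paschdia} inside a model, and then to kill that configuration by a one-line computation with $\delta$. This bypasses the general theorem that no $T_\mu$ interprets an infinite group, and it is in this sense the promised ``more direct'' argument.

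So suppose $M$ is one of the strongly minimal quasigroups built in Section~\ref{consquasi}, let $S$ be its reduct to the induced $q$-Steiner system (a member of $\hat\bK_0$), and suppose toward a contradiction that $H=\phi(M,\bar e)$ is an infinite definable subquasigroup on which $*$ is associative --- so $(H,*)$ is in fact a group. By strong minimality $H$ is cofinite in $M$, so, passing to a sufficiently large elementary extension (still called $M$), I would pick $a,b,x\in H$ mutually independent in the strongly minimal geometry, so that $d_S(\{a,b,x\})=3$. Then set $c=a*b$, $y=b*x$, $z=a*y$; closure of $H$ keeps $c,y,z\in H$, and associativity --- used here and only here --- gives the extra identity $z=a*(b*x)=(a*b)*x=c*x$. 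Since the quasigroup coordinatizes the $q$-Steiner system, the product of two points lies on the line joining them, so $c$ lies on the line $\ell_{ab}$, $y$ on $\ell_{bx}$, $z$ on $\ell_{ay}$, and --- by the identity just displayed --- $z$ also on $\ell_{cx}$. A routine genericity argument (each new point avoids the finitely many proper $\acl_S$-conditions over the earlier parameters) then shows that for a sufficiently generic choice the six points $a,b,c,x,y,z$ are pairwise distinct, the four lines $\ell_{ab},\ell_{bx},\ell_{ay},\ell_{cx}$ are distinct, and no other triple among the six is collinear; for $q=3$ this configuration is literally the Pasch configuration.

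The contradiction is then immediate. In the finite partial linear space $X=\{a,b,c,x,y,z\}\subseteq S$ each of the four lines above meets $X$ in exactly three points and so contributes nullity $1$ to $\sum_{\ell\in L(X)}\mathbf{n}_X(\ell)$, while every other line based in $X$ meets $X$ in two points and contributes $0$; hence $\delta(X)=6-4=2$ by Definition~\ref{defdelrank}. On the other hand $\{a,b,x\}\subseteq X$ is independent, so $d_S(X)\ge d_S(\{a,b,x\})=3$ by monotonicity of $d$, whereas $d_S(X)\le\delta(X)$ straight from Definition~\ref{defd} (take $B=X$; note $X\in\bK_0$ as a finite substructure of $S\in\hat\bK_0$). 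Thus $3\le d_S(X)\le\delta(X)=2$, which is absurd, so no such $H$ exists.

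I expect the one genuinely delicate step to be the genericity bookkeeping in the second paragraph --- verifying that the six points are really distinct and that the configuration acquires no accidental extra incidence, so that the count $\delta(X)=6-4$ is exact; everything else is forced by Definitions~\ref{defdelrank}--\ref{defd}. I also note that associativity is used only to produce the identity $z=c*x$, hence the fourth line: without it $X$ would carry just three lines and $\delta(X)=3=d_S(X)$, so the argument correctly leaves room for infinite non-associative subquasigroups.
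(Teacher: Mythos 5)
Your proposal is correct and follows essentially the same route as the paper's own proof: both take three algebraically independent elements of the putative associative subquasigroup, use associativity exactly once to force the fourth line (the intersection point closing up the Pasch/group configuration), and then derive the contradiction $3 = d(X) \leq \delta(X) = 2$ from the flatness computation. Your version is somewhat more explicit about the genericity bookkeeping and about the inequality $d \leq \delta$, but the underlying argument is the same.
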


Proof. Let $G$ be a definable  infinite subquasigroup of $\Gscr_\mu$ with
associative multiplication that is generated by three algebraically
independent elements, say $D,G,H$ as in Figure~\ref{Paschdia}. Now $DG =E$
so, by associativity, $F(DG) = FE =X$. Similarly, $H=FD$ implies $(FD)G = HG
=X$ so the lines $HG$ and $FE$ intersect in $X$. In any $T_\mu$ the algebraic
closure dimension of a closed subset $A$ is $d(A) =\delta(A)$. So if $A$ is
the six points of the configuration we should have $\delta(A) \geq d(A) =3$.
But the actual calculation\footnote{Section 4.2 of \cite{Hrustrongmin}
describes combinatorial geometries that calculate the dimension of a union of
closed subsets by exclusion-inclusion principle as `flat'.} gives $\delta(A)
=2$. So the Pasch configuration
 is omitted. $\qed_{\ref{noinfsubgr}}$



%

In particular, a strongly minimal quasigroup constructed in this way can
never be a group. The associative law forces the intrinsic closure $\icl(P)$
of three algebraically independent elements (which should have $d(\icl(P)
\geq 3$) to have dimension $2$. Nevertheless, in general there will be many
realizations of a Pasch configuration $P$ in a strongly minimal Steiner
triple system constructed as in \cite{BaldwinPao}, since $\delta(P) = 2 \geq
0$. Indeed any pair of points extends to a Pasch configuration in the generic
model. Fact~\ref{noinfsubgr} shows in general that configuration cannot
extend to an infinite subquasigroup.

\cite{HorsleyWebb} suggested that anti-Paschian STS might be an amalgamation
class.  We have't shown that;  we construct smaller amalgamation classes of
anti-Paschian STS. We need the following notion.


\begin{definition}[$R$-closure]\label{Rcl} Let $(M,R)$ be a
$\tau$-structure. We define the $R$-closure, $\cl_R(X)$, for $X \subset M$.
Define inductively $X= X_0$ and for each $n$,  $c \in X_{n+1}$ if
 $a,b \in X_n$ and $R(a,b,c)$. Now $\cl_R(X) = X_N$,  where $N$ (possibly
$\omega$) is where the inductively defined sequence $X_n$ terminates.  A
set $X$ is $R$ independent if no element is in the $R$-closure of the
others.
\end{definition}

 \begin{lemma}\label{omitPascham} The subclass of $\bK^P_0$  of those finite
 structures with $3$-element lines that  omit the Pasch configuration
 satisfies amalgamation.
 \end{lemma}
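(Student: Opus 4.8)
The plan is to verify the three conditions required for Fact~\ref{KL} (or rather the Hrushovski-style amalgamation machinery): that $\bK^P_0$ is closed under the relevant operations, that free amalgamation stays inside the class, and that omitting the Pasch configuration is preserved under free amalgamation of strong extensions. Since $\bK^P_0 \subseteq \bK_0$ already satisfies $\leq$-amalgamation via the free amalgam $D = A \oplus_C B$ of Definition~\ref{defcanam}, the only new content is to check that if $A$ and $B$ are anti-Pasch (and contain the common strong substructure $C$), then $D = A \oplus_C B$ is anti-Pasch as well. So the heart of the proof is a configuration-chasing lemma: any copy of the Pasch configuration inside $D$ must already live in $A$ or in $B$.

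First I would recall the structure of lines in $D$: by Definition~\ref{defcanam}, a non-trivial line of $D$ is either a line of $A$, a line of $B$, or the union (amalgam) of a line $\ell_A$ of $A$ and a line $\ell_B$ of $B$ that share a common line $\ell = \ell_A \cap \ell_B$ based in $C$ (so $|\ell \cap C| \geq 2$, hence with $3$-element lines $\ell \subseteq C$ entirely). Now suppose $P \subseteq D$ is a Pasch configuration: six points, four lines, each line a triple, arranged as in Diagram~\ref{Paschdia}, with every pair of the six points lying on at most the displayed lines. Consider the points of $P$ lying in $A - C$ and those in $B - C$. If all six lie in $A$ (i.e., none in $B - C$), then $P \subseteq A$ and we are done by hypothesis; symmetrically for $B$. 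So assume $P$ has a point $a \in A - C$ and a point $b \in B - C$. The key observation is that the only lines of $D$ meeting both $A - C$ and $B - C$ are the amalgamated lines through $C$, and such a line meets $C$ in (at least) two of its points; with $3$-element lines this forces the third point of every such line to lie in $C$ as well. Tracing the four lines of the Pasch figure, each point of $P$ has degree $2$ in the configuration, and one checks that if $a \in A-C$ and $b \in B-C$ both occur then the collinearity graph of $P$ forces a path between them through points of $C$; counting how many of the six points must then be forced into $C$, one derives either that $P \subseteq A$ or $P \subseteq B$ after all, or that $D$ would have to contain a line of length $\geq 4$ or two distinct lines through a common pair of points — contradicting that $D$ is a (partial) $3$-Steiner system. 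This is the main obstacle: the bookkeeping on which of the six points land in $C$, and showing the only consistent possibilities are the trivial ones.

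Having established the configuration-chasing lemma, the rest is routine. Joint embedding follows since the empty structure is anti-Pasch and strong in every member of $\bK^P_0$, and the free amalgam of two structures over $\emptyset$ is their disjoint union (Definition~\ref{canamdef}), which introduces no new lines and hence no new Pasch configuration. For $\leq$-amalgamation: given $C \leq A$ and $C \leq B$ in $\bK^P_0$ with $A \cap B = C$, form $D = A \oplus_C B \in \bK_0$ (this is where we use that $\bK_0$ already has canonical amalgamation, Axiom~\ref{yax}.5 / Definition~\ref{defcanam}), observe $B \leq D$ and $A \leq D$ by Definition~\ref{canamdef}, and invoke the lemma to conclude $D$ is anti-Pasch, so $D \in \bK^P_0$. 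One should also note $\bK^P_0$ is closed under isomorphism and under strong substructure (a substructure of an anti-Pasch system is trivially anti-Pasch, since a sub-configuration copy is a copy), so $(\bK^P_0, \leq)$ is a smooth class of countably many finite structures, and all hypotheses needed downstream are met.

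One subtlety worth flagging explicitly in the write-up: the amalgamation here is over a possibly non-full-length line situation, so the claim ``$D$ is a partial $3$-Steiner system'' must be checked — but this is already part of $D \in \bK_0$ together with the convention (after Definition~\ref{defdelrank}) that $\bK^*$ consists of finite linear spaces, and the free amalgam does not merge or lengthen lines beyond what Definition~\ref{defcanam} prescribes. With that in hand, the only genuinely new step is the combinatorial lemma that the Pasch configuration cannot be split across a free amalgam, and I expect essentially all the work of the proof to be concentrated there.
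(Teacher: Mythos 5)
Your overall strategy --- form the free amalgam $D = A \oplus_C B$ and show a Pasch configuration cannot be split between $A-C$ and $B-C$ --- is in the right spirit, and your observation that no $3$-element line of $D$ can meet both $A-C$ and $B-C$ is essentially right (such a line would be based in $C$ and hence have at least four points, so in fact no such line exists at all). But the configuration chase that you yourself flag as carrying all the weight is exactly where the argument breaks. Carry out the count: if $p \in P\cap(A-C)$, then the four points of $P$ collinear with $p$ all lie in $A$, so the only point of $P$ that can lie in $B-C$ is the unique point opposite $p$ (the Pasch collinearity graph is $K_6$ minus a perfect matching). The surviving case is therefore $a\in A-C$ and $b\in B-C$ an opposite pair with the remaining four points in $C$; the two lines through $a$ then each consist of $a$ plus two points of $C$, and likewise for $b$. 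This produces \emph{no} line of length $\geq 4$ and \emph{no} two lines through a common pair, so the contradiction you announce does not materialize. What actually kills this case is the strongness hypothesis, which you never invoke in the chase: $a$ lies on two lines based in $C$, so $\delta(a/C) = 1-2 = -1 < 0$, contradicting $C\leq A$. Without some such appeal to $\leq$ (or to primitivity), the claim is simply false: anti-Pasch-ness is not preserved under free amalgamation of arbitrary non-strong extensions (take $C$ four independent points, $A=C\cup\{a\}$ carrying two of the Pasch lines, $B=C\cup\{b\}$ carrying the other two).

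Two further points of comparison with the paper's proof. First, the paper performs the standard reduction to amalgamating a $0$-primitive good pair $E/D$ with $F\supseteq D$, and gets the needed dichotomy from primitivity: a line meeting $E-D$ has two points in $E-D$ and at most one in $D$, so the image of the Pasch base must fall into $D$, and since the Pasch configuration is $R$-generated by its base together with any one further point, the whole configuration lands in $F$ or in $E$. That generation argument is the clean substitute for your case count. Second, your assertion that the free amalgam ``does not merge or lengthen lines'' is at odds with Definition~\ref{defcanam}, which precisely joins $a\in A-C$ to $b\in B-C$ when both lie on a line based in $C$; keeping all lines of length $3$ is part of what the primitivity analysis (and the identification machinery of the amalgamation lemma in \cite{BaldwinPao}) is handling, and should not be waved through.
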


 \begin{proof}
We can reformulate the problem by setting $\boldsymbol{\rho}$ as the
isomorphism type of the good pair $(A/B)$ in Figure~\ref{Paschdia}, taking
$\{F,H\}$ as the base $B$  and $A =\{X,D,E,G\}$ as a good extension. We use
the standard $\bK_0$ for linear space. But we modify a $\mu \in \Uscr$ by
setting $\mu(A/B) =0$. We must show $\bK_\mu$ has amalgamation.

Fixing notation as in the proof of amalgamation in \cite[5.11]{BaldwinPao},
consider structures with $(E/D)$ a good pair, $D\subseteq F$ and all in
$\bK_0$; we want to amalgamate $F$ and $E$ over $D$. Note that every
 non-trivial line that intersects $E-D$ is contained in $E$ and has two
elements in $E-D$. This holds, as if the line intersects $F-D$ then it has
$4$ points  by Definition~\ref{defcanam}. But, if it intersects $D$ in $2$
points $E$ is not primitive over $D$.  Thus $F$ is $R$-closed in $G$.
The key property of the Pasch configuration is that each point not in the
base is on a $3$-element line that intersects the base.  This implies that
if there is an embedding of the Pasch configuration $P$ in $G$, the image
of the base  $EH$ is contained in $D$. (Otherwise there would be a line
from $F-D$ to $E-D$.) But since the Pasch configuration is $R$-generated by
the base along with any other point, we have $A \subseteq F$ if $A \cap F
\neq \emptyset$ and $A \subseteq E$ if not. Either violates the hypothesis
that $F$ and $E$ omit the Pasch configuration.
\end{proof}

Applying Lemma~\ref{omitPascham},
\begin{corollary}\label{omitPasch} Fix a $\mu \in \Uscr$
with $\mu({\boldsymbol{\alpha}})=1$ and
  $\bK_\mu
\subseteq \bK^P_0$.
 Each model of $T_\mu$ is a strongly minimal anti-Pasch Steiner triple
 system. As usual, varying  $\mu$ yields $2^{\aleph_0}$ distinct
 families.
\end{corollary}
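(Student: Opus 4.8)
The plan is to deduce Corollary~\ref{omitPasch} from Lemma~\ref{omitPascham} together with the standard machinery of the Hrushovski construction already set up in Notation~\ref{hruclass}, Fact~\ref{KL}, and Notation~\ref{linelength}. First I would observe that once we fix $\mu \in \Uscr$ with $\mu(\boldsymbol{\alpha}) = 1$ and, for the good pair $\boldsymbol{\rho}$ of Figure~\ref{Paschdia} (base $\{F,H\}$, extension $\{X,D,E,G\}$), set $\mu(\boldsymbol{\rho}) = 0$, the resulting class $\bK_\mu$ is contained in $\bK^P_0$: indeed $\mu(\boldsymbol{\rho}) = 0$ forbids any copy of the Pasch good pair over its base, and since (as noted in the discussion after Fact~\ref{noinfsubgr}) every occurrence of a Pasch configuration in a linear space arises as such a good pair over two of its points, this is exactly the anti-Pasch condition restricted to finite structures. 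So $\bK_\mu \subseteq \bK^P_0$.

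Next I would invoke the general theory: by Lemma~\ref{omitPascham} the class $\bK^P_0$ (finite $3$-line linear spaces omitting Pasch) has $\leq$-amalgamation, and the proof of that lemma in fact verifies amalgamation for $\bK_\mu$ with the modified $\mu$. Together with $\leq$-joint embedding (which holds as usual, amalgamating over $\emptyset$) and the fact that there are only countably many finite structures, Fact~\ref{KL} yields a unique countable generic $\Gscr_\mu$, and the standard argument (as in \cite{BaldwinPao}, using that $\mu$ bounds the number of realizations of each good pair so that $\icl$-closure is algebraic closure) shows $T_\mu = \th(\Gscr_\mu)$ is strongly minimal. Every model of $T_\mu$ lies in $\hat\bK_\mu \subseteq \hat\bK^P_0$; since omitting the Pasch configuration is a universal property preserved under direct limits, every model of $T_\mu$ is anti-Pasch. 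That the generic is a genuine Steiner triple system (all lines length $3$, not just a partial system) follows from Notation~\ref{linelength}: lines in models of $T_\mu$ have length $k$ iff $\mu(\boldsymbol{\alpha}) = k - 2$, so $\mu(\boldsymbol{\alpha}) = 1$ forces line length exactly $3$, using existential completeness of the generic to see every line attains the maximal length.

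Finally, for the $2^{\aleph_0}$ clause I would argue as in the analogous results elsewhere in the paper: there are infinitely many isomorphism types $\boldsymbol{\beta}$ of good pairs in $\bK_0$ (for instance good pairs of increasing size not equal to $\boldsymbol{\alpha}$ or $\boldsymbol{\rho}$), and varying the values $\mu(\boldsymbol{\beta})$ over the allowed range gives continuum-many functions $\mu$; a routine back-and-forth / Ramsey-style argument shows that distinct choices (differing on the $\chi$-count realized in the generic) give non-elementarily-equivalent $T_\mu$, hence $2^{\aleph_0}$ distinct families. The main obstacle — and the only genuinely new point — is checking that imposing $\mu(\boldsymbol{\rho}) = 0$ does not destroy amalgamation, i.e. that a Pasch good pair can always be "avoided" during amalgamation rather than being forced by the free amalgam; but that is precisely the content of Lemma~\ref{omitPascham}, whose proof exploits that $F$ is $R$-closed in the free amalgam $G$ and that Pasch is $R$-generated by its base plus any one further point, so no new copy of Pasch is created. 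Everything else is the standard Hrushovski bookkeeping, which I would simply cite.
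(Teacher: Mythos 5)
Your proposal is correct and follows essentially the same route as the paper, which proves the corollary simply by invoking Lemma~\ref{omitPascham} (setting $\mu(\boldsymbol{\rho})=0$ for the Pasch good pair over the base $\{F,H\}$) together with the standard Hrushovski machinery for strong minimality, line length via $\mu(\boldsymbol{\alpha})=1$, preservation of the universal anti-Pasch condition under direct limits, and the usual variation of $\mu$ on infinitely many mutually non-embeddable good pairs for the $2^{\aleph_0}$ count. The only cosmetic difference is that you prove $\bK_\mu\subseteq\bK^P_0$ from $\mu(\boldsymbol{\rho})=0$, whereas the corollary as stated simply takes that inclusion as a hypothesis.
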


Similar arguments construct anti-mitre and anti-mia configurations.
The two  configurations are shown in Figure~\ref{mitreconf}. Letting $abc$ be
the bottom line, $c'b'a'$ the middle, and $x$ the vertex, the diagram
represents the left self-distributive law:
$$x(ab)= (xa)(xb).$$

Namely the self distributive law implies naming $a'$ as $xc$ and $c'$ as
$xa$ the lines $ac', bb', ca'$ intersect at $x$.  
 This
$(5,7)$-configuration \cite{Fujiwara} is called a {\em mitre\footnote{In the
diagram, $x$ is the top point. Label the middle line $a,b,c$ and the bottom
line $c',b',a'$.

Diagram taken from \cite{CaFaPa}.}};  The only other $(5,7)$-configuration,
({\em mia}), is obtained by  adding a point between the two points on the
base of the Pasch configuration and creating a new line.  By constructing
$\infty$-sparse configurations below we simultaneously omit the  Pasch,
mitre, and mia configurations.


 \begin{figure}[ht]\label{mitreconf}
 \begin{center}
\includegraphics[height=2in]{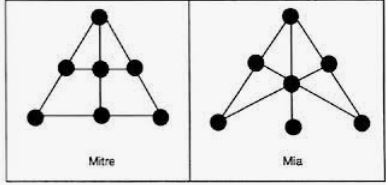}
\caption{Mitre and mia configurations}
\end{center}
\end{figure}

\begin{corollary}\label{omitmitre}
 There are anti-mitre and anti-mia Steiner triple systems in
every infinite cardinality. The examples are strongly minimal.
\end{corollary}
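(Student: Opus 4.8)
The plan is to mimic the proof of Lemma~\ref{omitPascham} and Corollary~\ref{omitPasch}, replacing the Pasch configuration by the mitre (resp.\ mia) $(5,7)$-configuration. First I would set $\boldsymbol{\rho}$ to be the isomorphism type of an appropriate good pair $(A/B)$ extracted from Figure~\ref{mitreconf}: with the mitre, take the base $B$ to be two of the seven points and $A$ the remaining five, choosing $B$ so that $A/B$ is in fact a good pair (i.e.\ $A\leq B$ fails for no proper subextension and $\delta(A/B)=0$). One checks $\delta$ on the seven points of the mitre is $7-\sum \mathbf{n}(\ell)=7-5=2$, so picking any two points as the base gives $\delta(A/B)=2-2=0$, a $0$-primitive extension; one then shrinks $B$ if necessary to a base making $A/B$ good in the sense of Definition~\ref{prealgebraic}. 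I work in the standard $\bK_0$ for linear spaces with line length $3$ (so $\mu(\boldsymbol{\alpha})=1$), and modify a given $\mu\in\Uscr$ by setting $\mu(\boldsymbol{\rho})=0$, and similarly $\mu(\boldsymbol{\rho}')=0$ for the mia type $\boldsymbol{\rho}'$ if one wants both omitted at once.

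The core is again an amalgamation argument exactly parallel to Lemma~\ref{omitPascham}. Following \cite[5.11]{BaldwinPao}, I consider $F$ and $E$ with $(E/D)$ a good pair, $D\subseteq F$, all in $\bK_\mu$, and form $G=F\oplus_D E$. As in the Pasch proof, every non-trivial line meeting $E-D$ lies inside $E$ with two points in $E-D$ (a line meeting $F-D$ would get $4$ points, contradicting $3$-element lines; a line meeting $D$ in two points would contradict primitivity of $E/D$). Hence $F$ is $R$-closed in $G$. The structural fact I need about the mitre (and the mia) is the analogue of the key property used for Pasch: every point of the configuration lies on a $3$-element line meeting the rest, so that from any one point plus enough others the configuration is $R$-generated, and in particular a point outside $D$ cannot appear in any embedded copy of the configuration that also uses a point of $F-D$ (that would force a forbidden $3$-line from $F-D$ to $E-D$). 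So any embedded copy of the mitre (resp.\ mia) in $G$ lies entirely inside $F$ or entirely inside $E$, contradicting that $F,E\in\bK_\mu$ already omit it. This gives amalgamation for the modified class, hence by Fact~\ref{KL} a generic $\Gscr_\mu$ and a strongly minimal $T_\mu$; since $\mu(\boldsymbol{\alpha})=1$, models of $T_\mu$ are Steiner triple systems, and they are anti-mitre (resp.\ anti-mia), with $2^{\aleph_0}$ families as $\mu$ varies and one model in each infinite cardinal by strong minimality.

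The step I expect to be the main obstacle is verifying that the mitre (and the mia) genuinely has the ``$R$-generation'' property in the precise form needed — namely that once the base $D$ (the chosen good-pair base, which may be smaller than two points of the picture) together with one extra point is fixed, the $R$-closure recaptures the whole configuration, so that no embedded copy can ``straddle'' $F-D$ and $E-D$. The mitre encodes left self-distributivity ($x(ab)=(xa)(xb)$), and one must check that the incidence pattern of its seven points is sufficiently rigid: reading off from Figure~\ref{mitreconf}, with middle line $abc$, bottom line $c'b'a'$, apex $x$, and the three lines $ac'x$, $bb'x$, $ca'x$, one sees that from $x$ and any two of $a,b,c$ one generates the third (via $abc$) and then $a',b',c'$ (via the three through-$x$ lines), so the base can be taken as (a subset of) $\{x\}$ plus one point, confirming goodness and the needed generation. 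The mia is handled the same way with its own incidence list. A secondary check is that setting $\mu(\boldsymbol{\rho})=0$ does not conflict with the constraint $\mu(\boldsymbol{\rho})\geq\delta(B)$ from Definition~\ref{Kmu'}; this is why $B$ must be chosen with $\delta(B)=0$ (e.g.\ a single point or the empty set after shrinking), so the modified $\mu$ still lies in $\Uscr^{\rm ls}$. Everything else is routine transfer from the Pasch case.
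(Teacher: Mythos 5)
Your overall strategy --- restrict to the subclass of finite $3$-line linear spaces omitting the mitre (resp.\ mia), prove $\leq$-amalgamation by showing no copy of the configuration can straddle the free amalgam $G=F\oplus_D E$, and conclude strong minimality and hence models in every infinite cardinality --- is exactly the paper's. But the step you yourself flag as the main obstacle is where your argument, as written, fails. You claim a point of $E-D$ and a point of $F-D$ cannot both lie in an embedded copy of the mitre because ``that would force a forbidden $3$-line from $F-D$ to $E-D$.'' That inference is valid only for pairs of points that are collinear \emph{in the configuration}, and the mitre (unlike Pasch, where every point off the base is collinear with the base) has many non-collinear pairs: with lines $abc$, $a'b'c'$, $ac'x$, $bb'x$, $ca'x$, the points $a$ and $a'$ share no line. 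One can write down placements --- say $a\in F-D$, $a',b'\in E-D$, and $x,b,c,c'\in D$ --- in which every line of the copy lies wholly in $F$ or wholly in $E$, so no line straddles, yet the copy does. Ruling these out requires the primitivity of $E$ over $D$ (every line meeting $E-D$ has exactly two points there and at most one in $D$) together with the $R$-closure of $F$, applied in a case analysis on where the pivot $x$ and the points $a',b',c'$ sit. This is precisely what the paper's proof does, noting explicitly that the Pasch argument here ``yields only that two points of the base are in $E-D$'' before running through the cases; your proposal replaces that analysis with a connectivity/``$R$-generation'' slogan that is not sufficient.

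A secondary problem: to reconcile $\mu(\boldsymbol{\rho})=0$ with the requirement $\mu(B,C)\geq\delta(B)$ of Definition~\ref{Kmu'} you propose shrinking the base $B$ to a single point or the empty set. But the mitre is not a $0$-primitive extension of such a $B$ (its relative $\delta$ is then $1$ or $2$), so $(B,A)$ is no longer a good pair and assigning it a $\mu$-value is meaningless. The cleaner formulation, used in Lemma~\ref{omitPascham} and in the paper's proof of Corollary~\ref{omitmitre}, is to restrict the class $\bK_0$ itself to structures omitting the configuration and prove amalgamation for that subclass directly.
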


\begin{proof} Using the first paragraph of the proof of Lemma~\ref{omitPasch},
we show the subclass $\bK^M_0$ of $\bK_0$  consisting of  those finite structures
 with $3$-element lines that  omit the mitre (or those omitting the mia)
configuration satisfies amalgamation.
 The argument there that the base is contained in
 $D$, here yields only that two points of the base are in $E-D$. Say $a \in
 F$ and $b,c \in E-D$.  We violate $F$ closure unless the point $F-E$ is
 $c'$. Now if the pivot $x$ is in $E-D$, we violate the $R$-closure of $F$.
 But if $x\in F$ and $a'$ or $b'$ is in $F$, $a'xa$ or $b'xb$ violates that
 $E$ is primitive over $D$. While if either is in $E-D$, $a'b'c'$ violates
  $R$-closure of $F$.  The proof of the mia case offers nothing new.
  \end{proof}

  Thus we construct structures which have no instances of associativity or
  self-distributivity anywhere and every left multiplication
  by an element not on a line  fails to
 preserve lines.

%

\subsection{Sparse Configurations in 3-Steiner systems}\label{spconfig}

 In, for example \cite[page 116]{Chicoetal}, an $(n,n+2)$ configuration in
a Steiner triple system (STS) is a substructure $(A,R)$ of $n+2$ points
with $n$ lines.
That is, $\delta(A) =2$.  They  say a system is {\em
$\infty$-sparse} if there are no $(n,n+2)$ configurations with $n\geq 4$.
We reformulate `sparse' in terms of $\delta$.

\begin{definition}\label{sparsedef} A Steiner triple system $(M,R)$ is $\infty$-sparse if
there is no $A\subseteq M$ with $|A|\geq 6$ and $\delta(A) =2$.
\end{definition}

Note that   the Pasch, mitre, and mia configurations are all forbidden
 in an $\infty$-sparse STS.
\cite{Chicoetal} construct by a four page inductive construction of finite
approximations, $2^{\aleph_0}$ non-isomorphic countable $\infty$-sparse
systems.  
We modify the construction in \cite{BaldwinPao} by restricting $\bK_0$ to
$\bK^{sp}_0$ to get $\infty$-sparse STS of every infinite cardinality.

\begin{definition}\label{sparac}
Let $\bK^{sp}_0$ be the subclass of $\bK^*$ (linear spaces) such that for
every $B\subseteq A$: $$(\#)\  |B| > 1\ \rightarrow\ \delta(B) > 1\  \&
    \   |B|
    > 3 \rightarrow \delta(B) > 2.$$

Take  ${\bf U}$ as  $\Uscr^{sp}$, those
 $\mu\in \Uscr$ which can be achieved in $\bK^{sp}_0$.

\end{definition}

Condition $\#$ implies there are no $4$ element lines in a member of
$\bK^{sp}_0$ so  if $\mu \in \Uscr^{sp}$,  $\mu(\boldsymbol{\alpha}) =1$ and
the generic model will be a Steiner triple system.

%
%
%

%
%
%

\begin{theorem}\label{Sap} The system $(\bK^{sp}_0,\leq)$ has $\leq$-amalgamation. And so
for any $\mu \in \Uscr$, $\bK^{sp}_\mu$ has $\leq$-amalgamation.
\end{theorem}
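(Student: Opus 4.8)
The plan is to verify $\leq$-amalgamation for $(\bK^{sp}_0,\leq)$ by checking that the free (canonical) amalgam from Definition~\ref{defcanam} stays inside the sparse class $\bK^{sp}_0$; the passage to $\bK^{sp}_\mu$ is then the usual Hrushovski argument and is immediate from the earlier machinery. So suppose $A\cap B = C$ with $A,B,C\in\bK^{sp}_0$ and $C\leq A$ (the case $C\leq B$ is symmetric), and form $D = A\oplus_C B$. We already know from \cite[Lemma 3.14]{BaldwinPao} (and Definition~\ref{defcanam}) that $D$ is a linear space, that $\delta(D/A)=\delta(B/C)\geq 0$, and more generally that $B\leq D$ and the amalgam interacts well with $\delta$; what remains is precisely the extra condition $(\#)$ defining $\bK^{sp}_0$, namely that every subset $E\subseteq D$ with $|E|>1$ has $\delta(E)>1$, and with $|E|>3$ has $\delta(E)>2$.

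The key step is a submodularity/monotonicity computation for an arbitrary $E\subseteq D$. Write $E_A = E\cap A$ and $E_B = E\cap B$, so $E = E_A\cup E_B$ and $E_A\cap E_B \subseteq C$. First I would observe that since $C\leq A$ we have $\delta(E_A/E_A\cap C)\geq 0$ hence more usefully $\delta(E_B)\leq \delta(E)$ fails in general, so instead I use the standard inequality coming from Axiom~\ref{yax}.3 (submodularity) together with the structure of the canonical amalgam: every non-trivial line of $D$ meeting $E$ is either a line of $A$, a line of $B$, or a line based in $C$ that has been extended, and each such extended line contributes its nullity additively in a way controlled by the $C$-part. This yields $\delta(E)\geq \delta(E_A) + \delta(E_B) - \delta(E_A\cap E_B)$, i.e. submodularity for the amalgam. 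Now I split into cases according to where the ``large'' part of $E$ lives. If $E\subseteq A$ or $E\subseteq B$ there is nothing to prove since $A,B\in\bK^{sp}_0$. Otherwise both $E_A - C$ and $E_B - C$ are nonempty; set $E_0 = E_A\cap E_B$, a subset of $C$.

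The heart of the matter is the case analysis on $|E_0|$. If $|E_0|\geq 4$ then $\delta(E_0)\geq 3$ (as $E_0\in\bK^{sp}_0$), and since $\delta(E_A)\geq 3$ and $\delta(E_B)\geq 3$ one gets $\delta(E)\geq 3+3-\delta(E_0)$; this is not automatically $>2$, so I need the sharper bound that adding the genuinely new points of $E_B - C$ to $E_A$ cannot drop $\delta$ below $\delta(E_A)$ by more than $\delta(E_B) - \delta(E_0)$, which is where $C\leq A$ (giving $d_A(E_A)\geq$ the contribution of $E_0$) is used to upgrade the inequality to $\delta(E)\geq \delta(E_A) + (\delta(E_B)-\delta(E_0))\geq 3 + 0$; and since $|E|>|E_A|\geq 4$... wait, more carefully: the point is that $\delta(E_B) > \delta(E_0)$ strictly when $|E_B|>|E_0|$, because $E_B\supsetneq E_0$ with $E_B\in\bK^{sp}_0$ forces $\delta$ to increase, and similarly on the $A$ side, so the strict inequalities in $(\#)$ for the pieces combine to a strict inequality for $E$. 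If $|E_0|\in\{2,3\}$ then $\delta(E_0)\geq 2$, $\delta(E_A)\geq 2$, $\delta(E_B)\geq 2$, and submodularity plus the strict gain from each side again gives $\delta(E)>2$ as soon as $|E|\geq 4$, and $\delta(E)>1$ in general. If $|E_0|\leq 1$ the amalgam is (near) disjoint over $E_0$, so $\delta(E)\geq \delta(E_A)+\delta(E_B) - 1 \geq 2+2-1 = 3$ whenever both sides are nontrivial, easily beating both thresholds.

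The main obstacle I anticipate is exactly the borderline bookkeeping in the case $|E_0|\geq 4$ (or $|E_0|=3$ with $|E|=4$): naive submodularity gives $\delta(E)\geq \delta(E_A)+\delta(E_B)-\delta(E_0)$, which only yields $\delta(E)\geq 3$ and not the needed strict inequality unless one uses that $C\leq A$ forces $d_A(E_0)=\delta(E_0)$ (no ``hidden'' nullity inside $A$ attached to $E_0$), so that the points of $E_A - E_0$ contribute a strictly positive amount $\delta(E_A)-\delta(E_0)$ and likewise on the $B$ side. Making this rigorous means re-deriving the line-counting identity for $D$ from Definition~\ref{defcanam} carefully — lines based in $C$ that acquire points on both sides are counted once in $D$ but their nullities in $A$ and $B$ overlap precisely on $C$ — and then invoking condition $(\#)$ for $E_A$, $E_B$, and $E_0$ separately. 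Once that identity is in hand, the case analysis is routine, and the final sentence ``and so for any $\mu\in\Uscr$, $\bK^{sp}_\mu$ has $\leq$-amalgamation'' follows by the standard argument (as in \cite[5.11]{BaldwinPao}): the free amalgam does not create new copies of any good pair over its base beyond those already in $A$ or $B$, so $\chi$-bounds are preserved, using here that the canonical amalgamation of Definition~\ref{canamdef}/\ref{defcanam} satisfies Axiom~\ref{yax}.5.
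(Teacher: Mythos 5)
Your overall strategy (push condition $(\#)$ through the canonical amalgam via the decomposition $\delta(E)=\delta(E_A)+\delta(E_B)-\delta(E_0)$) is in the right spirit, but it misses the single observation that the paper's proof actually turns on, and the borderline case you flag as your ``main obstacle'' is a genuine obstacle that your proposed fix does not resolve. The paper's argument is: condition $(\#)$ already forbids any line of length $\geq 4$ in a member of $\bK^{sp}_0$ (a four-point line $\ell$ has $|\ell|>3$ and $\delta(\ell)=2$), and therefore the canonical amalgamation of Definition~\ref{defcanam} cannot introduce \emph{any} new relation between $A-C$ and $B-C$: clause (2) of that definition fires only for a line based in $C$ that is extended on both sides, and merging those extensions would create a four-point line. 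Once no new relations appear the amalgam is a genuinely free join, $\delta$ is exactly additive over your decomposition, and $(\#)$ passes to the amalgam with essentially no bookkeeping.

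Your substitute for this observation is the claim that $E_B\supsetneq E_0$ with $E_B\in\bK^{sp}_0$ forces $\delta(E_B)>\delta(E_0)$ strictly; that is false. Take $E_0=\{c_1,c_2\}\subseteq C$ and $E_B=\{c_1,c_2,b\}$ a collinear triple: $\delta(E_0)=\delta(E_B)=2$. This is exactly the dangerous configuration: if the line through $c_1,c_2$ is extended by $a$ in $A$ and by $b$ in $B$, your identity gives $\delta(\{c_1,c_2,a,b\})=2+2-2=2$ for a four-element set, violating $(\#)$ --- and indeed the merged line has four points, so the free amalgam of two arbitrary members of $\bK^{sp}_0$ over a strong $C$ need not lie in $\bK^{sp}_0$ and your case analysis cannot close. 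The repair (implicit in the paper's appeal to \cite[5.11]{BaldwinPao}) is to run the amalgamation through the reduction to good pairs: for a $0$-primitive $E/D$ with $|E-D|\geq 2$ no line based in $D$ meets $E-D$, so no merging occurs, while the line-lengthening good pair $\boldsymbol{\alpha}$ is handled by \emph{identifying} the new point with an existing third point of the line (this is where $\mu(\boldsymbol{\alpha})=1$ enters), not by forming the free join. A small further point: amalgamation requires both $C\leq A$ and $C\leq B$ simultaneously; these are not symmetric alternative cases.
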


Proof. Let $A, B, C\in \bK^{sp}_0$ with  $C \leq  A$ and  $C \leq  B$. Linear
space amalgamation (Definition~\ref{defcanam}) cannot introduce any relation
between $A-C$ and $B-C$, as this would produce a $4$-element line. But then
it is clear that $\#$ is preserved in the amalgam. We use the first clause of
$\#$ to avoid $B$ with $\delta(B) =1$.  Now the proof from \cite{BaldwinPao}
applies to give   amalgamation for $\bK_\mu$ if $\mu \in \Uscr^{sp}$.
$\qed_{\ref{Sap}}$

\begin{theorem}\label{getsparse}  There are continuum many $\mu$ such that
\begin{enumerate}
\item $T_\mu$ is strongly minimal (so $\aleph_1$-categorical);
\item Every model of $T_\mu$ is an $\infty$-sparse Steiner triple system;
\item $T_\mu$ has countably many countable models. \end{enumerate}

    \end{theorem}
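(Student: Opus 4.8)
\textbf{Proof proposal for Theorem~\ref{getsparse}.}

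The plan is to assemble the three items from machinery that is by now standard once the relevant amalgamation class has been identified, namely $(\bK^{sp}_0,\leq)$ together with the class $\Uscr^{sp}$ of admissible $\mu$-functions. The key input is Theorem~\ref{Sap}, which gives $\leq$-amalgamation for $(\bK^{sp}_\mu,\leq)$ for every $\mu\in\Uscr^{sp}$; combined with $\leq$-joint embedding (which holds because the empty structure is a strong substructure of each member, so disjoint union serves, after noting that disjoint union preserves $\#$ and preserves the $\mu$-bounds since no new good pairs over nonempty bases are created) and the fact that there are only countably many finite structures in $\bK^{sp}_\mu$, Fact~\ref{KL} produces a unique countable generic $\Gscr_\mu$. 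Its theory $T_\mu$ is then the object of interest.

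First I would establish (1). This is exactly the point of the Hrushovski $\mu$-apparatus: because each $\mu\in\Uscr^{sp}\subseteq\Uscr$ bounds, for every good pair $(B,C)$, the number of disjoint copies of $C$ over $B$ by $\mu(B,C)$, every $0$-primitive extension is algebraic, so $\icl$-closure coincides with $\acl$ in models of $T_\mu$, and the dimension theory coming from $d$ (Definition~\ref{defd}) is that of a strongly minimal set. Concretely I would cite the corresponding strong-minimality proof in \cite{BaldwinPao} (the argument is insensitive to which subclass $\bK_0$ one starts from, provided $(\bK_0,\delta)$ satisfies the axioms of Section~\ref{hrucon}, which Definition~\ref{sparac} arranges, and provided $\mu\in\Uscr^{\mathrm{ls}}$ with $\mu(\boldsymbol\alpha)=1$ so that lines have length $3$ by Notation~\ref{linelength}). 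Strong minimality gives $\aleph_1$-categoricity by Baldwin--Lachlan. That every model of $T_\mu$ is a Steiner triple system follows because $\Gscr_\mu$ is a linear space (inherited from $\bK^*$), each line has length exactly $3$ in $\Gscr_\mu$ by the existential-completeness/genericity remark combined with $\mu(\boldsymbol\alpha)=1$, and being a Steiner triple system is a first-order property preserved in elementary extensions.

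Next, (2): every model of $T_\mu$ is $\infty$-sparse. Here the main obstacle, and the reason the restriction to $\bK^{sp}_0$ was made, is that $\infty$-sparseness is a universal statement ranging over \emph{all} finite subsets $A$ with $|A|\ge 6$, asserting $\delta(A)\ne 2$; one must see this passes from the finite approximations to every model, not just the generic. The argument I would give: if $M\models T_\mu$ and $A\subseteq M$ is finite with $|A|\ge 6$, then $\icl_M(A)$ is finite (the remark after Definition~\ref{strongdef}), strong in $M$, and lies in $\hat\bK^{sp}_\mu$; hence $\icl_M(A)$ — being a finite strong substructure of a model of $T_\mu$ — is isomorphic to a member of $\bK^{sp}_\mu\subseteq\bK^{sp}_0$, so condition $\#$ of Definition~\ref{sparac} applies to $B:=\icl_M(A)$, giving $\delta(B)>2$ since $|B|\ge|A|>3$. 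Because $A\subseteq B$ with $B$ strong, $d_M(A)=d_M(B)=\delta(B)$; and $\delta(A)\ge d_M(A)=\delta(B)>2$, so $\delta(A)>2$, in particular $\delta(A)\ne 2$. (The inequality $\delta(A)\ge d_M(A)$ is immediate from the definition of $d$ as an infimum of $\delta$-values over finite supersets inside $M$.) Thus $M$ has no $(n,n+2)$-configuration on $\ge 6$ points, i.e. $M$ is $\infty$-sparse in the sense of Definition~\ref{sparsedef}.

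Finally, (3): $T_\mu$ has only countably many countable models. For a strongly minimal theory the countable models are classified by the dimension of $\acl(\emptyset)$ together with (when $\acl(\emptyset)$ is infinite) nothing more; more precisely, by Baldwin--Lachlan a strongly minimal theory has either $1$ or $\aleph_0$ countable models according to whether it is $\aleph_0$-categorical, and in our situation $\acl_{\Gscr_\mu}(\emptyset)$ is empty (there are no $0$-primitive extensions of the empty set in $\bK^{sp}_0$, since $\delta(\emptyset)=0$ and any nonempty $B$ has $\delta(B)\ge 1$ by $\#$, so $\emptyset$ is never strong-below a proper extension in the required way) — hence the models are exactly $\{M_n:n\le\omega\}$ where $M_n$ has a $\leq$-independent set of dimension $n$, giving precisely $\aleph_0$ of them. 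I would phrase this by quoting the Baldwin--Lachlan dichotomy and checking non-$\aleph_0$-categoricity via the infinite strictly increasing tower of finite strong substructures (equivalently, unboundedly large $\leq$-independent sets), which is immediate from amalgamation. For the ``continuum many $\mu$'' clause, I would note as in Corollary~\ref{omitPasch} that distinct $\mu\in\Uscr^{sp}$ (which one sees is itself of size continuum, since one may freely choose the value $\mu(\boldsymbol\beta)\ge\delta(B)$ on infinitely many independent good-pair types without violating realizability in $\bK^{sp}_0$) give non-elementarily-equivalent $T_\mu$: two such theories differing on some good pair $\boldsymbol\beta$ disagree on the first-order sentence asserting the maximal number of disjoint copies of $C$ over a copy of $B$. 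The expected main obstacle is precisely step (2) — ensuring the \emph{universal} sparseness condition transfers from $\bK^{sp}_0$ to arbitrary models — and the $\icl$-is-finite-and-strong argument above is the mechanism that handles it.
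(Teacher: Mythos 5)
Your proposal is correct and follows essentially the same route as the paper, whose own proof is only a two-sentence sketch citing \cite{BaldwinPao} for strong minimality and noting that $\#$ forbids the $(n,n+2)$-configurations; your fleshing-out of why sparseness transfers to every model (every finite subset of a model of $T_\mu$ lies in some member of $\bK^{sp}_0$, so $\#$ applies) is the right mechanism, though the detour through $\icl_M(A)$ is unnecessary since $\#$ applies to $A$ itself. The one point deserving more care — in the paper as much as in your write-up — is the parenthetical claim that $\Uscr^{sp}$ has size continuum: the usual witnesses (mutually non-embeddable pseudo-cycles over $2$-element bases) have $\delta=2$ on $\geq 6$ points and are therefore excluded by $\#$, so one must instead exhibit infinitely many good pairs over bases of $\delta$-value at least $3$ (e.g., cycles threaded through a $3$-element independent set) on which $\mu$ can vary.
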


 Proof. As in \cite{BaldwinPao}, for any $\mu$ satisfying
 Definition~\ref{sparac}, the associated $T_\mu$ is a Steiner triple
 system. But by omitting $A$ with $\delta(A) =2$ and $|A|\geq 6$, the
 structure is $\infty$-sparse. $\qed_{\ref{getsparse}}$

\section{Constructing strongly minimal quasigroups}\label{consquasi}
  \numberwithin{theorem}{section}

  While \cite{BaldwinPao} shows there are strongly minimal $k$-Steiner
systems for every $k$, \cite{GanWer,BaldwinsmssII} imply that there can be
quasigroups only when $k$ is a prime power. Our strongly minimal
$k$-Steiner systems $(M,R)$ can admit a definable `truly' binary function
\cite{BaldwinVer} only under very strong additional hypotheses on $\mu$
(\cite[Theorem 0.2]{BaldwinVer}). Nevertheless, there are strongly minimal
quasigroups which induce $k$-Steiner systems when $k$ is a prime power. For
this result we need the generality of Fact~\ref{KL}, as we will axiomatize
$\bL^*_0$ ($\bK^q_V$ here)
 with $\forall\exists$-sentences. We sketch a different proof than that
detailed  in \cite{BaldwinsmssII} of the existence of strongly minimal
quasigroups.

 The coordinatizing result rests primarily on work of
\cite{GanWer,Steinpnas,Swierfreealg} and others who achieved a
`coordinatization' of such Steiner systems by quasigroups. The contribution
here is  that  although, for $k>3$, because the Steiner system never
interprets a quasigroup \cite{BaldwinsmssII}, this coordinatization is not
a bi-interpretation,
 we can in fact demand for  $k= q =p^n$  the existence of a Steiner
 $k$-system that is interpreted in a strongly minimal quasigroup. The key
 to this
  is the relationship of so-called $(2,k)$ varieties \cite{Padchar,GanWer}
to a two-transitive finite structure and thus eventually to the
reconstruction of a finite field.  Following \cite{GanWer2} we call the
quasigroups which arise when $k$ is a prime power $q$, {\em block algebras}.

A variety is a collection of algebras (structures in a vocabulary with only
function/constant symbols and no relation symbols) that is defined by a
family of equations. The essential characteristic of the equational theories
below is that each defining equation involves only two variables.  In
particular, none of the varieties are associative.


\begin{definition}\label{defrelvar}\cite{Smithlec}
 A 
quasigroup  $(Q,*)$ is a groupoid\footnote{In the background literature on
quasigroups, a {\em groupoid} is simply a set with a binary operation. So,
I use this notation although it is no longer common.} $(A,*)$ such that for
$a,b\in  Q$, there exist unique elements $x,y \in Q$ such that both
$$a * x = b, y * a = b.$$
 \end{definition}

 The general notion is a universal Horn class, not a variety. But an $(r,k)$
 variety of groupoids is a quasigroup \cite{Quackquac}.

 \begin{definition} \cite{Padchar} \label{rkdef} \begin{enumerate}
 \item The variety $V$ is an $(r,k)$ variety if every $r$-generated
     subalgebra of any $A \in V$ is isomorphic to the free $V$-algebra on
     $r$ elements and has cardinality $k$.

  \item A {\em Mikado} variety  \cite[128]{GanWer} is $(2,q)$-variety with
      all fundamental operations binary and with an equational base of
      $2$-variable equations.
\end{enumerate}
 \end{definition}

 Thus, `Mikado' picks out those $(r,k)$-varieties that are really determined by
 their free algebras on $2$-generators.

 \begin{fact} \cite{GanWer} \label{blalg}
 Given a (near)-field\footnote{A near-field is an algebraic structure
     satisfying the axioms for a division ring, except that it has only one
     of the two distributive laws.} $(F,+,\cdot,-,0,1)$ of cardinality $q$
     and a primitive element $a \in F$, define a multiplication $*$ on $F$
     by $x*y = y+ (x-y)a$. An algebra $(A,*)$ satisfying the 2-variable
     identities of $(F,*)$ is in a $(2,q)$-variety of  {\em block algebras}
     over $(F,*)$.\end{fact}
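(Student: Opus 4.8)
The plan is to recover the Ganter--Werner result \cite{GanWer} (the $(r,k)$-variety framework being Padmanabhan's \cite{Padchar}); I describe the steps I would carry out. Write $q=|F|$ and, to avoid the degenerate case, assume $q\ge 3$ (for $q=2$ the induced system has line length $2$). Let $V$ be the variety of groupoids axiomatised by the set of all $2$-variable identities holding in $(F,*)$, so that $(F,*)\in V$; by construction a $2$-variable identity holds in $V$ exactly when it holds in $(F,*)$ (one direction is the definition; the other because $(F,*)$ is a member). What must be shown is that $V$ is a $(2,q)$-variety, i.e.\ that the free $V$-algebra on two generators $F_V(2)$ has exactly $q$ elements and that every subalgebra of every $A\in V$ generated by two distinct elements is isomorphic to it. I will reduce this to two facts about the single algebra $(F,*)$: it is generated by any two of its elements, and it is simple.

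First I would check $(F,*)$ is a quasigroup. Over a field $x*y=xa+y(1-a)$, and both $x*z=y$ and $z*x=y$ are uniquely solvable for $z$ since $a$ is primitive (so $a\ne 0$) and $a\ne 1$ (as $q\ge 3$), so $a$ and $1-a$ are invertible; over a near-field one runs the same computation keeping the scalar on the correct side throughout. Next I would show $(F,*)=\langle u,v\rangle$ for any two distinct $u,v$. The translations $x\mapsto x+c$ and the dilations $x\mapsto\lambda x$ ($\lambda\ne 0$) are automorphisms of $(F,*)$ (a short computation using associativity of $\cdot$ and the available distributive law), so one may assume $\{u,v\}=\{0,1\}$; then the unary operation $z\mapsto z*0=za$ carries $1$ to $a,a^2,\dots$, which exhausts $F^\times$ because $a$ is primitive, and $0$ is already present, so $\langle 0,1\rangle=F$. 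In particular $(F,*)$ is $2$-generated of size $q$, hence a quotient of $F_V(2)$, giving $|F_V(2)|\ge q$.

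The crux is the reverse inequality, and it comes from identifying the $2$-variable term operations of $(F,*)$. An induction on terms shows every $2$-variable term $t(x,y)$ induces on $(F,*)$ the operation $(x,y)\mapsto xc+y(1-c)$ for a unique $c=c(t)\in F$ (the generators give $c=1,0$, and $s*t$ gives the combination $c(s)a+c(t)(1-a)$), and conversely every $c\in F$ arises: the set of achievable coefficients contains $0,1$ and is closed under $c\mapsto ca$, hence contains $F^\times$. For $c\ne c'$ the two operations already disagree at $(x,y)=(1,0)$, where they take the values $c$ and $c'$. Therefore, for each term $t$, fixing a term $t_c$ inducing the $c$-operation, the identity $t(x,y)=t_{c(t)}(x,y)$ holds in $(F,*)$ and so in $V$, while $t_c=t_{c'}$ fails in $(F,*)$ for $c\ne c'$ and so fails in $V$. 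Hence $F_V(2)$ is exactly $\{[t_c]:c\in F\}$, of size $q$, and the natural surjection $F_V(2)\to(F,*)$ is a bijection, hence an isomorphism.

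Finally I would use that $(F,*)$ is simple: a quasigroup congruence on this affine quasigroup over $F$ is controlled by a one-sided ideal of the near-field $F$ stable under multiplication by $a$, and a near-field has no such ideal except $0$ and $F$. Then for any $A\in V$ and distinct $b,c\in A$, the subalgebra $\langle b,c\rangle$ is a homomorphic image of $F_V(2)\cong(F,*)$ under a map that is not constant (its domain is generated by two distinct elements), hence, by simplicity, an isomorphism; so $\langle b,c\rangle\cong(F,*)$, which has $q$ elements. This exhibits $V$ as a $(2,q)$-variety --- indeed a Mikado variety, its equational base consisting of $2$-variable equations in the one binary operation --- and its members are the block algebras over $(F,*)$. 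I expect the only real friction to be bookkeeping the single distributive law of a near-field in the quasigroup verification, the automorphism computation, and the simplicity step; over an honest field these are all routine linear algebra.
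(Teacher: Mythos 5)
The paper does not prove this statement: it is quoted as a \emph{Fact} with a citation to Ganter--Werner, so there is no internal proof to compare against. Your reconstruction is, as far as I can check, a correct proof of the cited result, and it follows the standard Ganter--Werner/Stein line: $(F,*)$ is an idempotent ``affine'' quasigroup over the (near-)field; primitivity of $a$ makes it $2$-generated (your translation/dilation reduction to $\langle 0,1\rangle$ and the orbit of $1$ under $z\mapsto za$); the classification of binary term operations as $(x,y)\mapsto xc+y(1-c)$ with the achievable $c$ forming all of $F$ pins down $|F_V(2)|=q$ and identifies $F_V(2)$ with $(F,*)$; and simplicity of $(F,*)$ forces every $2$-generated subalgebra of any member of $V$ to be a faithful copy. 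That last step is the same mechanism the paper itself gestures at when it invokes Padmanabhan's equivalent characterization via sharp $2$-transitivity of the automorphism group of the free $2$-generated algebra. Two places deserve the care you already flag: (i) in a genuine near-field only one distributive law is available, so the rewriting $y+(x-y)a=xa+y(1-a)$ and the dilation-automorphism computation must keep scalars consistently on one side (this is exactly how Ganter--Werner set it up); and (ii) the simplicity claim rests on the fact that congruence classes of this affine quasigroup are cosets of an additive subgroup stable under multiplication by $a$, which is standard but is the one non-formal input --- once granted, primitivity of $a$ makes the subgroup an ideal and kills it. With those points made explicit, your argument establishes the Fact.
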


This is one of 5 equivalent characterizations of an $(r,k)$  variety in
\cite{Padchar}. Obviously, the collection of $r$-generated subalgebras $A \in
V$ form a Steiner $(r,k)$-system; we need a third characterization: the
automorphism group of any $r$-generated algebra is strictly (i.e. sharply)
$r$-transitive.

Fix two vocabularies $\tau = \{R\}$ and  $\tau'$ with two ternary relations
symbols $R,H$. For each Mikado $(2,q)$-variety $V$ of quasigroups,   we
construct a strongly minimal theory of quasigroups (in $V$) that induce
$q$-Steiner systems. We use $H$ as the graph of the quasigroup operation in
$V$, $*$, to make our amalgamation class contain only finite structures (as
in \cite{BarbinaCasa}).  But $R$ is the ternary relation of collinearity.
Considering the general context of Notation~\ref{not0}, there are two
innovations in taking the $\bL_0$ as $\bK^q_V$: i) in each finite structure
in $\bK^q_V$ every line has $q$ points; ii) and $\bL_0 = \bK^q_{0,V}$
 for a $\tau'$-structure $A$, $\delta_{\tau'}(A') = \delta_{\tau}(A'\myrestriction \tau) $
so the amalgamation problem reduces to the known solution for
$\tau$-structures. We define the base class $\bK^q_{0,V}$ of finite
structures as follows.

\begin{definition}\label{K'mu}[$\bK^q$]
Fix a prime power $q$ and a Mikado variety $V$ of quasigroups (e.g. a block
algebra from Fact~\ref{blalg}) such that $F_2$, the free algebra in $V$ on
$2$ generators has $q$ elements. Let $\bK^q_V$ be the
collection\footnote{Clearly $V$ determines but there are distinct $V$ with
 $|F_2(V)| =q$.} of finite $(H,R)$-structures $A$ such that
\begin{enumerate}
\item $(A,R)$ is a linear space;

\item $(\forall a_1,a_2,a_3) H(a_1,a_2,a_3) \rightarrow R(a_1,a_2,a_3)$;

\item $(\forall a_1,a_2,a_3) [H(a_1,a_2,a_3) \wedge H(a_1,a_2,a'_3)]
    \rightarrow a_3 =a_3'$;
\item $(\forall a_1,a_2,a_3)(\exists b_1, \ldots b_{q-2}) [R(a_1,a_2,a_3)
    \rightarrow \bigwedge_{i\leq q-3}R(a_1,a_2,b_i)$;

\item $$(\forall x_1,x_2, \ldots x_{q})[ (R(x_1,x_2,x_i) \rightarrow
    \bigvee_{1 \leq i,j\leq q} x_i = x_j].$$
%
%
    \item If $A'\myrestriction R$ is a  maximal clique (line) with
        respect to $R$ (necessarily $|A'| =q$), $A'\myrestriction H$ is
        the graph of the free algebra $F_2\in V$.
    \end{enumerate}
    \end{definition}

    Note that Definition ~\ref{K'mu} implies that any {\em triple}
    satisfying $R$ in $A'\in \bK^q_V$ extends to a line in
    $A'$ of exactly length $q$. Since $V$ is axiomatized by $2$-variable
    equations,  if $A' \in \bK^q_V$, $A'\myrestriction H$ is
    the graph of an algebra in $V$. In the generic model {\em each pair} is
    included in a $q$-element line; but not in the finite structures.

\begin{definition}\label{defdelmu} Primitives, good extensions, and the permissible $\mu'$:
\begin{enumerate}
\item For a $\tau$-structure $(A,R)$ $\delta_{\tau} (A)$ is defined as for
    linear spaces in Definition~\ref{defdelrank}. Now for any $q$ and each
    $A' \in \bK^q_V$,
    let $A  = A'\myrestriction R $ and $\delta_{\tau'}(A' ) =
    \delta_\tau(A)$ and induce $\leq'$ from $\delta_{\tau'}$.

\item $\bK^q_{0,V} = \{A'\in \bK^q_V: \delta_{\tau'}(A')\geq 0\}$.
\item Define primitive extensions and good pairs in $\tau$ as usual using
    $\delta'_{\tau'}    $.

Let $\boldsymbol{\alpha}_q$ denote the isomorphism type of  $(\{c_1, c_2,
\ldots c_{q-2} \}/ab)$, where all the $c_i$ satisfy $R(a,b,c_i)$.

\item A $\mu'$ mapping $\bK^q_{0,V}$ into $Z$ is in\footnote{For
    simplicity, we write $\Uscr_{\tau'}$ to suppress the (uniform)
    dependence on the choice of $q$ and $V$.} $\Uscr_{\tau'}$ if it
    satisfies i) $\mu'(A'/B')\geq \delta_{\tau'}(B)$ and\footnote{Each of
    the $ |\aut(F_2)|$ of the renumberings the primitive extension that fix
    $ab$ yield isomorphic quasigroups since $V$ is $(2,q)$-variety. Setting
    $\mu'(\boldsymbol{\alpha}_q) = 1$ guarantees the result is a
    $q$-Steiner
   system.} ii) $\mu'(\boldsymbol{\alpha}_q) \geq 1$.

\item     Let $D' \in (\bK^{q}_{\mu',V},\leq')$ if and only if
    $\chi_{D'}(A'/B') \leq \mu'(A'/B')$.

    \end{enumerate}
\end{definition}

%

    Since both the restriction $\delta(A) \geq 0$ and the bound imposed
    by $\mu'$ are universally axiomatized it
is easy to check that $(\bK^{q}_{\mu',V},\leq')$ is smooth. However it is
$AE$-axiomatized because of clause \ref{K'mu}.2. Thus, the main difficulty in
proving Theorem~\ref{getsmquasigrp} is establishing amalgamation.

In \cite{BaldwinsmssII}, we gave a different construction which involves a
$\mu$ which counts good pairs in $\tau$ and a $\mu'$ which counts good pairs
in $\tau'$.  We write $\mu'$ here to emphasize that $\mu'$ counts good pairs
of $\tau'$-structures and for compatibility with the earlier notation. Unlike
\cite{BaldwinsmssII}, there is no dependence on a given $\mu$ defined on the
finite structures. 

\begin{theorem}\label{getsmquasigrp}
For each $q =  p^n$, each $\mu' \in \Uscr_{\tau'}$, and each Mikado-variety
of quasigroups $V$ with $|F_2(V) = q$,  there is a strongly minimal theory of
quasigroups, dubbed $T^q_{\mu',V}$, that interprets a strongly minimal
$q$-Steiner system.
\end{theorem}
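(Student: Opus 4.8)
The plan is to fit the class $(\bK^q_{\mu',V},\leq')$ into the abstract framework of Fact~\ref{KL}, so that the only real work is verifying $\leq'$-amalgamation; smoothness is already remarked to be routine (both $\delta\geq 0$ and the $\mu'$-bound are universally axiomatized), and joint embedding follows from amalgamation over the empty structure since $\emptyset\in\bK^q_{0,V}$ and $\emptyset\leq' A'$ for all $A'$ in the class. Once Fact~\ref{KL} produces the countable generic $\Gscr$, I would run the standard Hrushovski machinery: $d_{\Gscr}$ is a pre-dimension whose induced closure operator is algebraic closure (because $\mu'$ bounds the number of disjoint copies of each good pair — clause \ref{defdelmu}.5), exchange holds since the range of $\delta_{\tau'}$ lies in $\omega$, hence $T^q_{\mu',V}=\th(\Gscr)$ is strongly minimal. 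The key reduction is clause \ref{defdelmu}.1: $\delta_{\tau'}(A')=\delta_\tau(A'\myrestriction R)$, so the pre-dimension, the notion of strong extension, primitive and good pairs, and the counting function $\chi$ all factor through the $\tau$-reduct. This lets me transport the amalgamation argument of \cite[5.11]{BaldwinPao} for linear spaces almost verbatim, provided I can show the canonical linear-space amalgam of the $\tau$-reducts lifts to a $\tau'$-structure that is still in $\bK^q_V$.

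The steps, in order: (1) Given $A',B',C'\in\bK^q_{\mu',V}$ with $C'\leq' A'$ and $C'\leq' B'$, form $C=C'\myrestriction R$, etc., and take the free linear-space amalgam $D=A\oplus_C B$ of Definition~\ref{defcanam}; by the calculation in \cite{BaldwinPao} one has $\delta_\tau(D)=\delta_\tau(A)+\delta_\tau(B)-\delta_\tau(C)$ and $A,B\leq D$, and $D$ is a partial linear space. (2) Define $H^D$ on $D$: on the maximal $R$-cliques (lines) of $D$ that lie entirely inside $A$ (resp.\ inside $B$) copy $H^{A'}$ (resp.\ $H^{B'}$); the point is that a free linear-space amalgam introduces no new line meeting both $A-C$ and $B-C$ in two points (that would be a line of length $>q$, in fact it would already violate that lines have length $q$), so every maximal clique of $D$ is a maximal clique of $A$ or of $B$, and on $C$ the two definitions of $H$ agree because $H$ is determined by any two points on a full line and $A\cap B=C$ with $C'$ a common substructure. (3) Verify $D':=(D,H^D,R^D)\in\bK^q_V$: clauses \ref{K'mu}.1--\ref{K'mu}.5 are about the $\tau$-reduct or about $H$ being a functional subrelation of $R$, all inherited; clause \ref{K'mu}.6 holds because every maximal clique of $D$ already carried the $F_2$-graph in $A'$ or $B'$. (4) Then $\delta_{\tau'}(D')=\delta_\tau(D)$ gives $A'\leq' D'$ and $B'\leq' D'$, and freeness of the amalgam (every intermediate $E$ with $C\subseteq E\subseteq D$ is free) guarantees no good pair of $\tau'$ over a base inside $D'$ acquires a new realization beyond those already in $A'$ or $B'$ — here I invoke Definition~\ref{canamdef} and the fact that $\chi$ counts \emph{disjoint} copies over a base, so $\chi_{D'}(P'/Q')\leq\mu'(P'/Q')$ is preserved for every good pair, i.e.\ $D'\in\bK^q_{\mu',V}$. (5) Finally, interpret the Steiner system: $(\Gscr\myrestriction R)$ is a linear space whose lines have length $q$ because $\mu'(\boldsymbol{\alpha}_q)=1$ forces each pair $a,b$ to lie on exactly one full $q$-clique in the existentially closed generic (this is the $\tau'$-analogue of Notation~\ref{linelength}, and clause \ref{K'mu}.6 makes $H^{\Gscr}$ restricted to each line the graph of an $F_2$-copy, so $\Gscr$ is a quasigroup in $V$ inducing that $q$-Steiner system). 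Varying $\mu'$ over $\Uscr_{\tau'}$ yields $2^{\aleph_0}$ inequivalent theories by the usual argument counting which good pairs are bounded at which value.

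The main obstacle I expect is step (2)--(3): unlike the pure linear-space case, $\bK^q_V$ is only $AE$-axiomatized (clause \ref{K'mu}.2, and the implicit demand that $H$ be \emph{total} on each full line via clause \ref{K'mu}.6), so I must be careful that the amalgam $D'$ really is closed under the quasigroup operation where it needs to be — precisely, that every maximal $R$-clique of $D$ has exactly $q$ points and a fully-defined $F_2$-structure, with no "mixed" clique forced to length $q$ while only being populated from $C$. The freeness of the linear-space amalgam is exactly what rules this out: a line based in $C$ stays a line (of the length it had), and a new line between $A-C$ and $B-C$ never gets more than the two points that forced it, hence is \emph{not} maximal of length $q$ and carries no $H$-obligation; so the $AE$-axioms survive. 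A secondary point to check is that dropping closure under substructure (emphasized as the novelty of this section) causes no trouble — the KuekerLas framework of Fact~\ref{KL} and Notation~\ref{not0} was set up precisely not to require it, and $\delta_{\tau'}$-strong substructures of members of $\bK^q_{0,V}$ are again in $\bK^q_{0,V}$, which is all that is needed for the generic to be a $(\bK^q_{\mu',V},\leq')$-union.
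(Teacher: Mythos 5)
Your overall strategy --- reduce everything to the $\tau$-reduct via $\delta_{\tau'}(A')=\delta_\tau(A'\myrestriction\tau)$, import the linear-space amalgamation of \cite[5.11]{BaldwinPao}, and then define $H$ line-by-line on the amalgam --- is the same as the paper's. But there is a genuine gap in steps (2) and (4): the free amalgam of \emph{arbitrary} $A'\hookleftarrow C'\hookrightarrow B'$ is not always in the class, and your claim that it ``introduces no new line meeting both $A-C$ and $B-C$'' is false. Concretely, let $C'$ be a two-point structure $\{c_1,c_2\}$ carrying no nontrivial line, and let $A'$ and $B'$ each be a full $q$-line through $c_1,c_2$ equipped with the $F_2$-structure; then $C'\leq' A'$ and $C'\leq' B'$ (each extension has $\delta=0$ over $C$), but Definition~\ref{defcanam} places all of $A-C$ and $B-C$ on the single line based in $C$, producing a $(2q-2)$-clique that violates clause (5) of Definition~\ref{K'mu}. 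The parenthetical ``that would be a line of length $>q$'' is not a reason the configuration cannot occur; it is precisely the reason the free amalgam fails to lie in $\bK^q_V$. The same example shows that $\chi$ of the good pair $\boldsymbol{\alpha}_q$ over $c_1c_2$ can exceed $\mu'(\boldsymbol{\alpha}_q)$ in the free amalgam, so the $\mu'$-bound is not automatically ``preserved'' as you assert in step (4).

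The paper's proof avoids both problems by following the standard Hrushovski template: one amalgamates $F\supseteq D$ with a single $0$-\emph{primitive} extension $E/D$, and uses primitivity to show that any line meeting $E-D$ in two points meets $D$ in at most one point (while a line with three points in $D$ lies entirely inside $D$ by Definition~\ref{K'mu}), so that $H$ extends unambiguously to the disjoint amalgam; and when a good pair (in particular the line-completion $\boldsymbol{\alpha}_q$) would be over-realized, one \emph{identifies} the new copy with one already present in $F$ rather than amalgamating freely, exactly as in Lemmas 5.10--5.11 of \cite{BaldwinPao}. Your argument needs this decomposition into primitive steps and the identification case to go through; without them, step (3) already fails on the example above.
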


\begin{proof} We now show the amalgamation for the $(\bK^{q}_{\mu',V},\leq')$, as in Lemma 5.11
and Lemma 5.15
of \cite{BaldwinPao}. Consider a triple $D, E, F$ in $\bK^{q}_{\mu',V}$ as in
Lemma~\ref{omitPasch}. That is, $D\subseteq F$ and $E$ is $0$-primitive over
$D'$. Since $E$ is primitive over $D$, although there may be a line contained
in the disjoint amalgam $G$ with two points in each of $D$ and $F-D$, each
line that contains 2 points in $E-D$ can contain at most one from $D$. If a
line contains three points from $D$, since $D$ satisfies
Definition~\ref{K'mu}.2 it is contained in $D$. Thus, there is no issue with
defining the relation $H$ on the disjoint amalgamation.  If $\mu'$ requires
some identification for some $(B,C)$, just as in \cite{BaldwinPao}, it is
because the (relational) $\tau'$-structure $BC$ is $DE$ and  there is a copy
of $C$ over $B$ in $F$ (Note the `further' in \cite[Lemma
5.10]{BaldwinPao}.).

The blocks of the Steiner system are the $2$-generated $*$-subalgebras. Now
the strong minimality of the generic follows exactly as in Lemmas 5.21 and
5.23
of \cite{BaldwinPao} and we have proved Theorem~\ref{getsmquasigrp}.
\end{proof}

For $q>3$, let $G_{H,R}$  be the group of automorphism for the countable
generic  of our construction  with vocabulary $\tau'$. $R$ is set-wise
invariant under the action of $G_{H,R}$ which is exactly the group of
$H$-automorphisms  but $H$ is not preserved by a permutation which setwise
stabilizes $R$.

We denote the theory of the generic $\Gscr_{\mu'} \myrestriction *$ by
$T_{\mu',V}$. We often drop the superscript $q$ as the specific $q$ is
irrelevant in further considerations.

\begin{remark}\label{noncomm} {\rm \cite[p5]{GanWer2} that depending on the choice
of the primitive $a$ in Definition~\ref{blalg}, the resulting $(r,k)$-algebra
may or may not be commutative. \cite[\S 5]{BaldwinVer} show that the strongly
minimal Steiner systems of \cite{BaldwinPao} have no non-trivial commutative
binary functions and deduce the theories do not admit elimination of
imaginaries.  Applying Theorem~\ref{getsmquasigrp} with the commutative
$(r,k)$ variety of block algebras yields a commutative strongly minimal
quasigroup.  Thus, more effort is needed to show it fails to eliminate
imaginaries.}
\end{remark}

\section{Strongly minimal block algebras, towers, and path graphs}\label{smba}
 \numberwithin{theorem}{subsection}

 The notion of an {\em (a,b)-cycle graph} is widely studied for finite
Steiner
{\em triple systems}. 
\cite{CameronWebb, Chicoetal} consider the notion for infinite Steiner {\em
triple} systems and prove the existence of infinite perfect and uniform
Steiner triple systems. {\em We generalize this notion to consider infinite
$q$-Steiner systems that are induced from strongly minimal
$(2,q)$-quasigroups} with $q$ a prime power \cite{BaldwinVer, BaldwinsmssII}.

We make the following assumption for this section. That is, we crystalize the
properties of the result of the construction in Section~\ref{consquasi} but
do not rely on any details of the actual construction.
 We will write $*$ to denote multiplication (as opposed to its graph $H$
 which was used to preserve the finiteness of structures in Section~\ref{consquasi} ).

\begin{assumption}\label{assT} $T$ is a strongly minimal theory in the
vocabulary\footnote{Here we write the function symbol $*$ rather than the
graph $H$ because we going to use the function to trace out a path.} $\tau' =
\langle *,R\rangle$ such that if $M\models T$
\begin{enumerate}
\item $(M,*)$ is a quasigroup in a Mikado variety $V$; \item $R$ is the
    graph of $*$.
\item There are functions $\delta$ and $d$ on the domain of $M$ that
    satisfy the properties of $\epsilon$ and $d$ in Section~\ref{hrucon};
    \end{enumerate}
    \end{assumption}

This assumption yields immediately that $(M,R)$ is a Steiner $q$-system where
$q = |F_2(V)|$. We use the $*$ operation to inductively construct a path on
points.

%
%

\subsection{Path Graphs}\label{pg}
\numberwithin{theorem}{subsection} \setcounter{theorem}{0}

Finite Steiner $3$-systems $Q$ are often studied via the cycle graph over
various $ab$; the pairs $(c,d)$ from $Q- \overline{ab}$ ($overline{ab}$ is
the line through $ab$) are colored red or blue depending on whether $a$ or
$b$ lies on the line $\overline{cd}$. Then a path is generated by choosing a
point $d$ off $\overline{ab}$ and starting with $\overline{ad}$ and
inductively choosing the line of a different color through the third point on
the current line. We extend this idea to $q$-Steiner systems.    It is
immediate that paths in Steiner $3$-systems do not intersect; so for strongly
minimal $3$-Steiner systems the definitions below reduce to those in
\cite{CameronWebb}. However, such disjointness is no longer immediate when
$q>3$ leading to the more complicated description of
    paths in Definitions~\ref{pathgraph2} and \ref{pathgraph3}.  In order
    to carry out the analysis, we exclude\footnote{This guarantees that the
    generator $d_1$ satisfies $d(d_1/\icl(ab)) \geq 0$.} from the graph,
    not just $\overline{ab}$ but the larger {\em finite} set $\icl(a,b)$, the
    smallest subset containing $a,b$ that is strong in $M$.  We later get
    stronger results by restricting the domain even further to $M-\acl(a,b)$.

\begin{definition}\label{pathgraph} Consider a Steiner system $(M,*,R)$
 determined by a   $q$-block algebra $(M,*)$ (Definition~\ref{rkdef}). For
    any $a,b \in M$, we will write $G_M(a,b)$ for the graph determined by
    the pair $a,b\in M$.

\begin{enumerate}
\item  The domain of $G_M(a,b)$ is $M-\icl(ab)$.

 \item For $x,y \not \in \icl(a,b)$, there is an edge colored    $a$
     (resp., $b$) joining $x$ to $y$  if and only if $R(a,x,y)$ (resp.,
         $R(b,x,y$)).

  \end{enumerate}
                \end{definition}

\begin{remark}\label{defch}
There is an edge coloured\footnote{Note that if $q=3$, this is
    the same as collinearity and we return to the framework of
    \cite{CameronWebb}.} $a$ (resp., $b$) joining $x$ to $y$ if and only if
    $a*x = y$ (resp., $b*x =y$).
    \end{remark}

 We have partitioned the lines ($R$-cliques) that intersect $\{a,b\}$ into
 $a$ and $b$ lines. Two lines with distinct colors can intersect in at most
 one point.


 We introduce certain  {\em paths} and then  in Section~\ref{indepcase}
{\em fans} in the graph that under appropriate hypotheses cover (most of)
the domain of the graph.
%




%

%

\begin{definition}\label{pathgraph2} Let $M \models T$ with $\mu \in
\Uscr^{\rm ls}$ (Definition~\ref{Kmu'}). Consider a $q$-block algebra
$(M,*)$ with associated path graph $G_M(a,b)$.
\begin{enumerate}
\item  For any $a,b$, we write $\overline{ab}$ to denote the  line of
    length $q$ generated by $\{a,b\}$.

 \item For $d_1 \not \in \icl(\overline{a b})$ we define  a sequence,
     denoted $P_{ab\dbar}$
     generated by $d_1 \in M-\icl(ab)$ over $\{a,b\}$ as follows.

     The   path $P_{ab\dbar}$ is the
sequence  $\dbar
=  d_1,  \ldots d_m$ such that $a*d_{2i+1} = d_{2i+2}$ and $b*d_{2i+2}
      =d_{2i+3}$ for $0\leq i \leq m$.

%
%
%

\item The {\em envelope}, $P^e_{ab\dbar}$, of the  {\em path},
    ${P}_{ab\dbar}$, with $\dbar = d_1 \ldots d_m$,is the union of
   the lines\footnote{We may sometimes write ${P}_{ab\dbar}$ when
    ${P}_{ab\dbar}- \{a,b\}$ is more precise; this is the usual
    ambiguity in describing good pairs $C/B$; technically $B$ and $C$
    are disjoint.} $\overline{d_i,d_{i+1}}$ for $1\leq i < m$. Note
    that if $i$ is odd (even), $a$ ($b$) is on
    $\overline{d_i,d_{i+1}}$.
\end{enumerate}
\end{definition}

    Note that if $e$ is on   an $a$-edge, $a*e$ is on the same line (and
similarly for $b$). Thus, the lines of the Steiner system are cliques of the
path graph.  But, if $e$ with $e\neq a$ and $e\not \in \icl(ab)$ is on an
$a$-edge multiplying $e$ by $b$ begins the generation of a distinct path,
$P_{ba\ebar}$  in the graph. We will show such a path is either an infinite
chain
or `cycles' by generating a $0$-primitive extension of $ab$.

\begin{definition}\label{pathgraph3}
\begin{enumerate}
    \item There are two possibilities when the process of
        Definition~\ref{pathgraph2} is iterated forward $m$ times.
        \begin{enumerate}
 \item An {\em $(a,b)$-chain} of length $m$ is a path $P_{ab\dbar}$
     with $\dbar
=d_1, d_2, \ldots d_m$ such that $a*d_{2i+1} = d_{2i+2}$ and $b*d_{2i+2}
 =d_{2i+3}$ for $0\leq i \leq m$

  and: for $j>i+1$ the lines $\overline{d_{i}d_{i+1}}$,
$\overline{d_{j}d_{j+1}}$ do not intersect. Thus $\delta(P_{ab\dbar})
=\delta(P^e_{ab\dbar})$. Note that $m$ counts the number of lines in the
path. We write $\boldsymbol{\sigma_m  }$ for the isomorphism type  of an
$m$-chain.  Note that, as in the 3-Steiner system case, the length of an
$m$-chain must be divisible by $4$.

\item At some stage the new line generated by $a, d_{2i+1}$ or $b,
    d_{2i+2}$ intersects one of the earlier lines in the envelope of the
    path. {\em In this case, we stop} the construction   with the new line.
    The result is an $m$-{\em pseudo-cycle}, an envelope
$P_{ab\dbar}$, such that for exactly one pair $(i,j)$ with
    $0\leq i \leq m$ and  $j>i+1$ the lines $\overline{d_{i}d_{i+1}}$,
    $\overline{d_{j}d_{j+1}}$ intersect.

     We write $\boldsymbol{\gamma_s }$ for an isomorphism type of an
    $s$-pseudo-cycle $P_{ab\dbar}$ 
     and
    $P^e_{ab\dbar}$ for the  isomorphism type of its

\item If the process continues infinitely we call the result an infinite
    chain.
    \end{enumerate}

\item Note that the construction of path through $d_1$ could equally well
    begin with the first line a $b$-line. In this case, we introduce a
    finicky notation. The $P_{bad_1}$ path\footnote{Switch $a$ and $b$ in
    the subscript.} through $d_1$ starts with a $b$-line.
%
%

\end{enumerate}
                \end{definition}

%
%

%
%



 Recall the construction stops as soon as there is a loop
but may be infinite.
In the pseudo-cycle case  $P^e_{ab\dbar}$ contains a minimal pseudo-cycle,
 which is
                $0$-primitive over $ab$. Thus, each triple $a,b$
                and $d \not \in \icl(ab)$, determine a unique mimimal path
$P_{abd}$ beginning with an $a$-edge; it may be a pseudocycle (perhaps
starting with a different $d'$) of minimal length or an infinite chain. {\em
While formally we have defined {\em pseudo-cycles} to emphasize the return
need be back  to the initial point, we will often write {\em cycle} for
short.}

Within the algebraic closure of $ab$ analysis by the graph structure is
    more complicated.  As, since any two points
     determine a line implies there are  $c \in \acl(ab)$ such that
      $d(c/ab)$ remains $0$ even when $c$ is an intersection point of many
      lines. Thus, in Section~\ref{primecase} we study inside  the graph over
      $\icl(ab)$ $\acl(ab)$ and in Section~\ref{indepcase} work over
      $\acl(ab)$.

%
\subsection{Inside $\acl(ab)$: Many Finite  paths}\label{primecase}

\numberwithin{theorem}{subsection} \setcounter{theorem}{0}

This subsection analyzes the structure of $G_M(a,b)$ when $M$ is a  prime
 model that  is algebraic over the empty set and for arbitrary $M$ the
 structure of  $\acl_M(ab)-\icl_M(ab)$. Section~\ref{indepcase} describes
 the properties of $(a,b)$-path graph off $\acl(ab)$.


While the definition in Section~\ref{pg} was primarily combinatorial (except
for the use of $\icl(a,b)$ rather than $\overline{ab}$, we now use the model
theoretic machinery about strongly minimal sets more heavily.

\begin{remark}[Towers]\label{towers}{\rm
Two prototypical properties of a strongly minimal theory $T$ are: a) the
existence of a unique {\em generic type} over the model whose restriction to
any set has infinitely many solutions and, as a result,  if $T$ has at least
two non-isomorphic countable models, b) the arrangement of the countable
models into a tower. Let $\langle M_j\colon 0\leq j< \omega+1\rangle$ be the
tower (elementary chain: $M_n \prec M_{n+1}$) of countable models of $T$,
with $M_0$ the prime model\footnote{The prime model of $T$ is the unique
model that can be elementarily embedded in each model.}; then $M_\omega$ is
isomorphic to the generic structure $\Gscr_{\mu,V}$ \cite[Lemma
5.29]{BaldwinPao}.  One might think  each $M_n$ is prime with an $\acl$-basis
of cardinality $n$;  we now show this is true when $\acl(\emptyset)$ is
infinite.
However, in Section~\ref{techs} we  provide choices of $T^q_{\mu',V}$ where
$M_0$ has dimension $2$ and so $M_n$ has dimension $n+2$.}


\end{remark}

%
%

%

 The cycles (using only partial lines of length three) played an
 important role in \cite{BaldwinPao}. We constructed the $2^{\aleph_0}$
distinct theories $T_\mu$ in \cite[Lemma 4.11]{BaldwinPao}, by showing (in
the vocabulary $\tau = \{R\}$) there were a countable family of $4n$-cycles
(actually back to the same element) that are mutually non-embeddible and
$0$-primitive over $2$-element sets. The choice of $\mu$ determines which of
these cycles are realized. Varying the argument slightly shows
as $s$ increases the $\boldsymbol{ \gamma}_s$  
(Definition~\ref{pathgraph2}.(1b)) induce infinitely many mutually
non-imbeddible primitives in $\bK_\mu$ over a two element set that is
strongly embedded. We also noted
in \cite[Lemma 4.11]{BaldwinPao} that there are infinitely many mutually
non-embeddible primitives in $\bK_\mu$ over the empty set and similarly over
a $1$-element set.

 $\Uscr^{\rm ls}$ allows $\mu$ that forbid the realization of specific good pairs
$B/\emptyset$. In \cite{BaldwinPao}, we showed the algebraic closure of the
empty set was infinite if the generic contained a copy of the Fano plane, --
the unique $7$-element projective plane, $F$. So setting $\Fscr$ as the
collection of $\mu \in \Uscr$ with $\mu(F/\emptyset) >0$ guarantees
$\acl_M(\emptyset)$ is infinite for any $M\models T_\mu$.  We retain the name
$\Fscr$ but make it a much larger subset of $\Uscr^{\rm ls}$.

\begin{notation}\label{fanoset}
 Let
 $\Fscr$ be the set of $\mu'\in \Uscr^{\rm ls}  $ such
 that $\mu(C/\emptyset) > 0$ for some good pair $C/\emptyset$.
\end{notation}

\begin{lemma}\label{infacl} If $\mu' \in \Fscr$ and $M \models T_{\mu',V}$ then
$\acl_M(\emptyset)$ is infinite.
\end{lemma}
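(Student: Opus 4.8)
The plan is to leverage the defining property of $\Fscr$ together with the genericity of $M \models T_{\mu',V}$. Pick a good pair $C/\emptyset$ with $\mu'(C/\emptyset) > 0$; since $\emptyset \le M$ always holds and $M$ is a generic $(\bL_0,\le)$-union (in the sense of Notation~\ref{not0}), $M$ contains a $\le$-embedded copy of $C$ over the (strong) empty set. The key point is that for a $0$-primitive pair $C/\emptyset$, the copies of $C$ over $\emptyset$ realize a type that is isolated by a formula with only finitely many solutions in $M$: indeed $\chi_M(\emptyset, C) \le \mu'(C/\emptyset)$ is a finite bound, and every element of (a copy of) $C$ lies in $\icl_M(\emptyset) = \acl_M(\emptyset)$ because $C$ is primitive — there is no proper intermediate strong extension, so each element of $C$ fails to be strong over $\emptyset$ on its own and hence is intrinsically (thus algebraically) closed over $\emptyset$. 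So $\acl_M(\emptyset)$ is nonempty, containing a copy of $C$.

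First I would make precise the standard fact (used repeatedly in \cite{BaldwinPao}, e.g.\ in the analysis underlying Lemma~5.29) that in any $T_\mu$, $d$-closure and $\icl$ agree with $\acl$ on subsets of models: if $A \le M$ is finite then $\acl_M(A) = \icl_M(A)$ is finite, and conversely an element realizing a $0$-primitive type over a strong finite set is algebraic over it. Then I would invoke the observation in \cite[Lemma 4.11]{BaldwinPao}, recalled in the paragraph preceding Notation~\ref{fanoset}, that there are \emph{infinitely many} mutually non-embeddible $0$-primitive good pairs over the empty set. The strategy for infiniteness is then: given the finite set $\acl_M(\emptyset)$ has, say, been shown to contain the copy of $C$, I want to produce arbitrarily large finite strong subsets $A_n \le M$ with $A_n \subseteq \acl_M(\emptyset)$.

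The mechanism is a $\mu'$-independent pigeonhole: whatever the specific values $\mu'$ assigns, $\mu'$ is defined on \emph{all} good pairs and takes values $\ge \delta(B)$; in particular for the good pairs $C_n/\emptyset$ from the infinite mutually-non-embeddible family, genericity forces $M$ to contain $\min(\mu'(C_n/\emptyset),\ \text{something}\ge 1)$-many copies — and crucially at least one copy of each $C_n$ for which $\mu'(C_n/\emptyset) \ge 1$. Since $\mu' \in \Fscr$ only guarantees one such $C$, the honest route is different: from the single copy of $C$ in $\acl_M(\emptyset)$, I iterate. Set $A_1$ = that copy of $C$, a finite strong subset of $\acl_M(\emptyset)$. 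Now $A_1 \le M$ is a finite strong set, $M$ is generic, and over $A_1$ there exist infinitely many mutually non-embeddible $0$-primitive good pairs $D/A_1$ (the same argument as over $\emptyset$, shifting the base) with $\mu'$-values — but again $\mu'$ need not realize them. The clean fix: use instead that $M$ is \emph{infinite} and strongly minimal, so $\acl_M(\emptyset)$ is either finite or all of $M$; if it contains one point (the copy of $C$, which is nonempty), I must upgrade. I would argue that $\acl_M(\emptyset)$ is infinite by showing it is not contained in any finite strong subset: if $\acl_M(\emptyset) = A$ were finite, then $A \le M$, and genericity applied to some good pair $D/A$ with $\mu'(D/A) \ge 1$ (which exists since $\mu' \ge \delta$ forces the $\boldsymbol{\alpha}_q$-type, or one uses $\Uscr^{\rm ls}$ clause (ii)) produces a copy of $D$ over $A$ inside $\icl_M(A) = \acl_M(A) = \acl_M(\acl_M(\emptyset)) = \acl_M(\emptyset) = A$, contradicting $D \cap A = \emptyset$, $D \neq \emptyset$.

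\textbf{Main obstacle.} The delicate step is the last one: ensuring there is \emph{always} a good pair $D/A$ over the finite strong set $A$ that $\mu'$ actually realizes (value $\ge 1$), so that the copy is forced into $M$ by genericity yet is algebraic over $A$ — a value-$0$ assignment by $\mu'$ would block exactly the extension we want. I expect this to come down to the line-length good pair: $\boldsymbol{\alpha}_q$ (or $\boldsymbol{\alpha}$) always has $\mu' \ge 1$ by construction (Definition~\ref{Kmu'}(ii) / Definition~\ref{defdelmu}(4)(ii)), but the copy of $\boldsymbol\alpha_q$ over a pair in $A$ extends a \emph{line}, and a new point on a line through two points of $A$ need not be algebraic over $A$ — it may be intrinsic-neutral, $d(c/A) = 0$ but with $c \notin \icl(A)$ only if... in fact $\delta(\{c\}/A) < |\{c\}|$ forces $d(c/A) = 0$, hence $c \in \icl_M(A) = \acl_M(A)$. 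So that works, and the real subtlety is merely bookkeeping: each application genuinely enlarges $\acl_M(\emptyset)$ because the new point is not already in $A$. That, I would verify, follows since a line in $M$ has $q$ points while any copy of $C_n/\emptyset$ or a line-segment we have already placed in $A$ uses only finitely many; as $M$ is infinite we can always choose a pair in $A$ whose full $q$-line is not yet inside $A$. Wrapping up: this yields a strictly increasing chain $A_1 \subsetneq A_2 \subsetneq \cdots$ of finite subsets of $\acl_M(\emptyset)$, so $\acl_M(\emptyset)$ is infinite. $\qed_{\ref{infacl}}$
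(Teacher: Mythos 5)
Your overall architecture matches the paper's: use $\mu'(C/\emptyset)>0$ plus genericity to force a $0$-primitive $C$ over $\emptyset$ into $M$, observe that $d(C/\emptyset)=0$ puts $C$ inside $\acl_M(\emptyset)$, and then rule out finiteness of $\acl_M(\emptyset)$ by forcing ever more $0$-primitive extensions. (The paper does exactly this, noting that any such $C$ contains two intersecting lines, hence three non-collinear points, and then invoking the infinite tower of $0$-primitive extensions from \cite[Lemma 5.27]{BaldwinPao}, which only needs $\mu\in\Uscr^{\rm ls}$ once that seed is present.) But the concrete mechanism you choose for the iteration step fails. You propose to apply genericity to the line-length good pair $\boldsymbol{\alpha}_q$ over a pair $b_1,b_2$ in the putative finite $A=\acl_M(\emptyset)$, and you claim "we can always choose a pair in $A$ whose full $q$-line is not yet inside $A$." That is false: the remaining $q-2$ points of the line through $b_1,b_2$ form a finite set definable over $\{b_1,b_2\}$, hence lie in $\acl_M(b_1b_2)\subseteq\acl_M(A)=A$. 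So $A$ is line-closed, every line based in $A$ already carries its full $\mu'(\boldsymbol{\alpha}_q)$-permitted complement of points inside $A$, and the structure $A\cup\{c\}$ with a new point $c$ on such a line is not in $\bK^q_{\mu',V}$ at all (it violates the $\chi\leq\mu'$ bound). Genericity therefore cannot be applied to it, and no contradiction is produced. This is precisely the "value-$0$ blocking" worry you raise yourself as the main obstacle; your resolution of it does not go through.

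The repair is the one the paper uses: you must exhibit, over the finite strong set $A$, a $0$-primitive extension $D$ whose good pair $(B,D)$ has base $B\subseteq A$ with $\delta(B)\geq 1$ (so $\mu'(B,D)\geq\delta(B)\geq 1$ permits at least one copy) and whose isomorphism type is \emph{not yet realized} in $A$ (so adding one free copy keeps you inside $\bK^q_{\mu',V}$). This is where the three non-collinear points supplied by the seed $C$, and the infinite family of mutually non-embeddible $0$-primitive types over a two-element base (e.g.\ the pseudo-cycles $\boldsymbol{\gamma}_s$, per \cite[Lemma 4.11]{BaldwinPao}), enter: a finite $A$ can realize only finitely many of these types, so some unrealized one is available, genericity forces a copy disjoint from $A$, and that copy lands in $\acl_M(A)=A$ --- the contradiction you wanted. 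Without this selection argument your proof has a genuine gap; with it, it becomes the paper's proof. A minor additional quibble: your justification that $C\subseteq\acl_M(\emptyset)$ ("each element of $C$ fails to be strong over $\emptyset$ on its own") is not the right reason; what you need is $d(C/\emptyset)=\delta(C)=0$ together with the standard identification of $d$-dependence with algebraicity in these constructions.
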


\begin{proof}
It is easy to see that any $C$ that is $0$-primitive over $\emptyset$ must
contain two intersecting lines so three non-collinear points exist.  Noting
that the only use  in Lemma 5.27 of \cite{BaldwinPao} of the assumption that
the Fano plane is imbedded in $M$ is to guarantee that there are three
non-collinear point in a subset of $M$ that is $0$-primitive over
$\emptyset$, we get an infinite algebraic closure here.  The construction of
an infinite tower of $0$-primitive extensions uses only that $\mu \in
\Uscr^{\rm ls}$.
\end{proof}

  In Section~\ref{infcyc}, we give several examples of strongly minimal
quasigroups where the dimension of the prime model is $2$.


\begin{lemma}\label{primeinfcyc} If $M \models T$ with
$ \acl_{M_0}\neq (\emptyset)$ there are infinitely
many disjoint (over the finite $\icl_N(ab)$) finite cycles in $G_{N}(a,b)$,
where $N$ is a copy of the prime model of $T_\mu$ with $N \supseteq \{a,b\}$.

\end{lemma}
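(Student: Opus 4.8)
The plan is to combine two ingredients: in a prime model everything sits in $\acl(\emptyset)=\acl_N(ab)$, so the whole graph $G_N(a,b)$ lives inside an algebraically closed set; and that algebraically closed set is assembled, outside $\icl_N(ab)$, from $0$-primitive extensions, each of which (in a Steiner $\bK_\mu$-construction) is a finite pseudo-cycle. First I would fix the bookkeeping. Since $N$ is prime and $a,b\in N$, we have $N=\acl_N(\emptyset)=\acl_N(ab)$, infinite by hypothesis, while $\icl_N(ab)$ is finite (the remark after Definition~\ref{strongdef}). Hence $G_N(a,b)$ has infinite vertex set $N-\icl_N(ab)$, and every vertex $d$ of it satisfies $d\in\acl_N(ab)-\icl_N(ab)$, in particular $d(d/\icl_N(ab))=0$.

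The core step is to show that for each such $d$ the minimal path $P_{abd}$ of Definitions~\ref{pathgraph2}--\ref{pathgraph3} is a finite $m$-pseudo-cycle rather than an infinite chain. By the $\delta$-computation for chains, any $(a,b)$-chain has envelope satisfying $\delta\bigl(P^{e}_{ab\dbar}\cup\icl_N(ab)\bigr)=\delta(\icl_N(ab))+1$ for every initial segment, so an infinite $(a,b)$-chain would be an infinite strictly $\delta$-positive free extension of $\icl_N(ab)$. But all of its vertices lie in $\acl_N(ab)$, so each finite initial envelope $P^e$ has $d\bigl(P^e\cup\icl_N(ab)\bigr)=\delta(\icl_N(ab))$, witnessed by a finite $\le$-closed $Z\supseteq P^e\cup\icl_N(ab)$ with $\delta(Z)=\delta(\icl_N(ab))$; passing to the union over the envelopes of $P_{abd}$ exhibits the whole infinite chain as a $0$-extension of $\icl_N(ab)$ built entirely from $0$-primitive good pairs based inside it. Using the identification of these $0$-primitive extensions with the line-fillings $\boldsymbol{\alpha}_q$ and the pseudo-cycles $\boldsymbol{\gamma}_s$ (\cite[Lemma 4.11]{BaldwinPao} and its $q$-system analogue, the same $\delta$-computation) together with the minimality built into Definition~\ref{pathgraph3}, one concludes that once the envelope of $P_{abd}$ has been absorbed into such a finite $\le$-closed $Z$, the path through $d$ cannot avoid re-entering one of its own earlier lines, so it closes up; hence $P_{abd}$ is a finite pseudo-cycle.

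To assemble the conclusion, note that the finite pseudo-cycles $\{P_{abd}:d\in N-\icl_N(ab)\}$ cover $N-\icl_N(ab)$, since $d$ lies on $P_{abd}$; by uniqueness of the minimal path through a vertex (no vertex carries two $a$- or two $b$-coloured edges, and lines of distinct colours meet in at most one point), two of these minimal pseudo-cycles are either equal or meet only inside $\icl_N(ab)$; and each is finite. Since $N-\icl_N(ab)$ is infinite, no finite subfamily can cover it, so infinitely many of the $P_{abd}$ are pairwise disjoint over $\icl_N(ab)$, which is exactly the assertion.

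The step I expect to be the main obstacle is the core step: ruling out an infinite $(a,b)$-chain inside the algebraically closed set $\acl_N(ab)=N$. Because algebraic closure is strictly finer than $\delta$-closure in a Hrushovski construction, the bare fact that every vertex is algebraic over $\icl_N(ab)$ does not by itself forbid such a chain; the argument must use the specific structure of the prime model — that $\acl_N(ab)$ is generated over $\icl_N(ab)$ by $0$-primitive good pairs only, with no strictly $\delta$-positive step, as in the tower of Lemma~\ref{infacl} — together with the explicit classification of those good pairs from \cite{BaldwinPao}, to see that a path condemned to remain in this closure must eventually loop.
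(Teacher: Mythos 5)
Your argument hinges on the ``core step'': that every point $d\in N-\icl_N(ab)$, being algebraic over $ab$ in the prime model, generates a path $P_{abd}$ that must close up into a finite pseudo-cycle. This step is a genuine gap, and it is in fact essentially the open problem the paper poses immediately after this lemma (``Can the prime model contain an infinite chain?''). The difficulty is that $d(d/ab)=0$ is witnessed by \emph{some} finite $0$-extension $Z$ of $\icl_N(ab)$ containing $d$, but nothing forces that witness to be (or to contain) an initial segment of the envelope of $P_{abd}$. An infinite chain through $d$ can perfectly well have $\delta(P^e_m/\icl_N(ab))=1$ for every finite initial envelope $P^e_m$ while each of its points is separately pulled into algebraic closure by configurations \emph{external} to the path; absorbing $P^e_m$ into a $\le$-closed $Z_m$ with $\delta(Z_m)=\delta(\icl_N(ab))$ produces no self-intersection of the path's own lines. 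So ``the path cannot avoid re-entering one of its earlier lines'' does not follow, and your covering argument would, if valid, yield a decomposition of the prime model into finite cycles --- which the paper also explicitly leaves open. (The disjointness-or-equality claim for minimal pseudo-cycles through distinct vertices is also delicate for $q>3$, where lines of the two colours can intersect nontrivially, but that is secondary.)

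The paper's proof avoids all of this by running in the opposite direction: rather than analyzing an arbitrary vertex, it \emph{exhibits} the cycles. For each $i$ there is a pseudo-cycle $C_i$ of length $4i$ that is $0$-primitive over $\icl(ab)$ based on $ab$ (the mutually non-embeddable primitives of \cite[Lemma 4.11]{BaldwinPao}, adapted as in Section~\ref{primecase}). Since $\mu(C_i/ab)\geq\delta(ab)=2>0$, each $\Dscr\cup C_i$ embeds in the generic $M_\omega$; since $C_i$ is algebraic over $ab$ and $N\prec M_\omega$ is algebraically closed with $ab\subseteq N$, the image of each $C_i$ lies in $N$; and $0$-primitivity over $\icl_N(ab)\leq N$ gives disjointness over $\icl_N(ab)$. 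You should restructure your proof along these lines: produce infinitely many non-isomorphic finite pseudo-cycles that are forced into $N$ by algebraicity, instead of trying to show that every path in the prime model is finite.
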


\begin{proof} Such an $N$ exists by Lemma~\ref{infacl}. Fix $\Dscr$ as
$\icl_M(a,b)$. For each $i$ there is a pseudocycle $C_i$ that is a primitive
extension over $\icl(ab)$ based on $ab$ with length $4i$. The structure
 with domain   $\Dscr \cup C_i$ is denoted $\Ascr_i$.
Since $\mu(C_i/ab) \geq \delta(ab) =2$, there is an embedding of $\Ascr_i$
into the saturated (also generic) model $M_\omega$. But $N \prec M_\omega$
and is algebraically closed so the image of $C_i$ is in $N$. Now, since the
$C_i$ are $0$-primitive over $\icl_N(ab)\leq N$, the $\Ascr_i$ are disjoint
over $\Dscr$.

\end{proof}

\begin{question} Can the prime model contain an infinite chain?  Is there any
decomposition by chains of the prime model? Compare these questions with the
alternative decomposition of the prime model by taking the union of tree
decomposition by normal subsets in \cite{BaldwinVer}.
\end{question}

\begin{question}\label{allcycfin}
By using  the more radical alterations of the construction as in
Section~\ref{infcyc}, can we have all cycles in the prime model finite by
insisting exactly one
    isomorphism  type of a pseudocycle is consistent, say, a 4-pseudocycle?
\end{question}

\subsection{Over $\acl(ab)$ all paths are infinite}\label{indepcase}

\numberwithin{theorem}{subsection} \setcounter{theorem}{0}

We study those paths in $G_M(a,b)$ that are generated by $d_1\not \in
\acl_M(a,b)$.  We justify in Lemma~\ref{disjointness} the following notation:

%

\begin{notation}\label{pathnote} For $d_1\not \in \acl_M(a,b)$,
 $P_{abd_1}$  ($P^e_{abd_1}$)  denotes the (envelope of)
the longest path generated by  beginning with $ad_1$. This path may be
infinite.
\end{notation}

\begin{lemma}\label{disjointness} Suppose $d_1 \not \in \acl(a,b)$.

\begin{enumerate}
\item $d(d_1/ab) =1$; the path generated by $d_1$ is infinite.
\item  Distinct $a$-edges in the path  $P_{ab\dbar}$   cannot intersect;
    but each $a$-edge intersects $q-1$ $b$-edges.

\item If $P_{ab\dbar}$ is an infinite path then for every $X \subseteq P^e_{ab\dbar}$,
$d(X/ab) =1$.
\item If $P_{ab\dbar}$ an infinite path there is exactly one $e$ on
    $P^e_{ab\dbar}$ that is on an $a$-line and $P_{bae}$ is an infinite
    path (Recall Definition~\ref{pathgraph3}.2).
\end{enumerate}
\end{lemma}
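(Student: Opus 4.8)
The plan is to establish the four items in order, since each feeds the next. For (1), the key observation is that $d_1 \notin \acl_M(a,b)$ together with strong minimality forces $d(d_1/ab) = 1$ (it cannot be $0$, or $d_1$ would be in $\acl$; it cannot exceed $1$ since $\delta(\{a,b,d_1\}) = 3$ and the line $\overline{ab}$ has nullity $q-2$, giving $\delta \leq 1$ once $d_1$ is joined to $a$ or to $b$ — but in fact we must argue via $d$, not $\delta$: I would invoke that $d(d_1/\icl(ab)) \geq 0$, which is exactly the reason we excised $\icl(ab)$ from the graph, and then note that any proper extension would already lie in the intrinsic closure). Then I would argue that the path cannot terminate in a pseudo-cycle: a pseudo-cycle is $0$-primitive over $\icl(ab)$ (or over $\acl(ab)$), hence every point on it lies in $\acl(ab)$, contradicting $d_1 \notin \acl(a,b)$. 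So the construction never loops and the path is infinite.

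For (2), two distinct $a$-edges $\overline{d_i d_{i+1}}$ and $\overline{d_j d_{j+1}}$ (both $i,j$ odd) that met in a point would both pass through $a$ and through that common point, hence would be the same line of the Steiner system, which contradicts distinctness once one checks the $d$'s are genuinely distinct (this uses (1): everything has $d$-value $1$ over $ab$, so no collapsing can occur). The fact that each $a$-edge meets exactly $q-1$ $b$-edges is a counting statement: the $a$-line $\overline{d_i d_{i+1}}$ has $q$ points, one of which is $a$; through each of the other $q-1$ points runs a unique $b$-line, and these are pairwise distinct, giving $q-1$ distinct $b$-edges meeting it. For (3), I would use that $d$ is submodular and monotone (the properties of $\epsilon$, $d$ from Section~\ref{hrucon}, which hold by Assumption~\ref{assT}(3)): any finite $X \subseteq P^e_{ab\dbar}$ is contained in a finite sub-envelope, and along a genuine $(a,b)$-chain the lines are pairwise non-intersecting (beyond consecutive ones), so $\delta$ of the sub-envelope equals $\delta(\{a,b\}) + |\{d_i\}| - \sum(\text{nullities})$, a telescoping computation that gives $\delta = 1$; combined with $d(d_1/ab) = 1$ and monotonicity of $d$ one gets $d(X/ab) = 1$. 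The main subtlety here is ruling out that some line in the infinite envelope secretly intersects an earlier one — but that is precisely the ``chain'' hypothesis of Definition~\ref{pathgraph3}.(1a), and for the path generated off $\acl(ab)$ it follows from (1) since an intersection would produce a $0$-primitive pseudo-cycle inside $\acl(ab)$.

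For (4), the point $e$ in question is the ``other'' point of the first line: the path $P_{ab\dbar}$ starts with the $a$-line $\overline{d_1 d_2} = \overline{a d_1}$, and among its $q$ points, $a$ itself and the points $d_1, d_2, d_3, \dots$ already used are all on $b$-edges (each $d_i$ for $i$ even is produced by $b$); the uniqueness claim is that exactly one point $e$ of $P^e_{ab\dbar}$ lies on an $a$-line other than as an endpoint of an $a$-edge of the path itself — concretely $e$ is the point where we could have started multiplying by $b$ instead. I would show $P_{bae}$ (the path starting with a $b$-edge through $e$, per Definition~\ref{pathgraph3}.2) is infinite by the same argument as (1): $e \notin \acl(a,b)$ because $d(e/ab) = 1$ by (3) (since $e \in P^e_{ab\dbar}$), so its path cannot close into a pseudo-cycle. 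I expect the main obstacle to be bookkeeping in (4): pinning down exactly which point $e$ is and verifying it is \emph{unique}, which requires carefully tracking the color structure of the envelope and using (2) to see that no other point on $P^e_{ab\dbar}$ can host a fresh $a$-line without forcing two $a$-edges to intersect.
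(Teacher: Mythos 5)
Your proposal is correct and follows essentially the same route as the paper: item (1) via non-algebraicity forcing $d(d_1/ab)=1$ and a pseudo-cycle being $0$-primitive (hence algebraic) over $ab$; item (2) via the linear-space/subalgebra structure of the lines through $a$ and $b$; item (3) via monotonicity of $d$ and inter-algebraicity of envelope points with $d_1$; and item (4) by reducing to (1) and (2). The only cosmetic difference is that the paper argues (2) by closure of an $a$-line under the star terms while you invoke ``two points determine a line'' directly -- these are the same geometric fact.
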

\begin{proof}  1)
If $d_1 \in M - \acl(a,b)$,  $d(d_1/ab) =1$; otherwise $d_1\in \acl(ab)$. If
$P_{ab\dbar}$ is finite, it is because some $C \subseteq P_{ab\dbar}$ is
 a pseudocycle. But the $\delta(C/ab) = 0$ and $d_1 \in \acl(ab)$.

2) If $\overline{ad_{2i}}$ is a line in $P_{ab\dbar}$ then for
 any element $x \in  \overline{ad_{2i}}$, $a*x \in \overline{ad_{2i}}$. But for each
of the $q-1$ non-trivial  star
 terms, $t(x,y)$,
 $b * (t(a,d_1)$ generates a new line.

3) 
Suppose (without loss) that $d_1 \subseteq X \subseteq P^e_{ab\dbar}$ and
$d(X/\acl(ab)) =0$. Then $d_1 \in \acl(ab)$. Two paths generated by distinct
$d_i \not \in \acl(ab)$ can intersect in one point; $d(P_{abd_1} \cup
P_{abd'_0})= 1$. But if there are two points of intersection   $d_1 \in
\acl(ab)$.

4) For any such $e$ there is a line determined by $b,b*e$. But this line
generates an infinite path only if $d(e/ab) = 1$. Now apply 2).
\end{proof}

With these results in hand we see that actually $a,b, d_1$ generate a fan of
lines.

\begin{definition}\label{fan}
The {\em fan} generated by $abd_1$ is defined by induction.
\begin{enumerate}
\item $F^0_{abd_1}$ consists of all points on envelopes of paths generated
    by a line  $ae$  where  $e$ is on a $b$ edge of $P_{abd_1}$  or by a
    line  $be$  with  $e$ on an $a$-edge of $P^e_{bad_1}$;

\item $F^{n+1}_{abd_1}$ consists of all points on envelopes of paths
    generated by lines  $ae$   where  $e$ is on a $b$ edge of $P^n_{abd_1}$
    or by a line  $be$  with  $e$  on an $a$-edge of $P{^e}^n_{bad_1}$;

%
\item The fan $F_{abd_1} = \bigcup_{n< \omega} F^n_{abd_1}$.
\end{enumerate}
                \end{definition}

                Note that
                $F_{abe} =F_{baf}$ if $e$ and $f$ are both on the same line in $M-\acl(a,b)$
                through $a$ (or through $b$).

                As in  Lemma~\ref{disjointness}, we see  immediately that if two fans intersect in
                a single point their union is
                a larger (not definable) subset of rank $1$:

                \begin{lemma}\label{fanuni}
                Two fans can intersect
in at most one point.
\end{lemma}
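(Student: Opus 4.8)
\textbf{Proof proposal for Lemma~\ref{fanuni}.}

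The plan is to reduce the statement about fans to the statement about paths and their intersections, already controlled by Lemma~\ref{disjointness}, via the rank function $d$. First I would record the key global fact: any subset $X$ of $M-\acl(ab)$ that is built up from $a,b,d_1$ by the path/fan construction satisfies $d(X/ab)=1$, provided $d_1 \notin \acl(ab)$. For a single infinite path this is Lemma~\ref{disjointness}.3; for a fan $F_{abd_1}$ the same conclusion follows by the inductive definition (Definition~\ref{fan}), since at each stage $F^{n+1}_{abd_1}$ is obtained by adjoining paths generated by lines $ae$ (or $be$) where $e$ already lies in $F^n_{abd_1}$, hence $e\notin\acl(ab)$ with $d(e/ab)=1$, and a new path through such an $e$ contributes nothing new to the $d$-rank by Lemma~\ref{disjointness}.3 applied over $ab$ together with submodularity (Axiom~\ref{yax}.3). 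So $d(F_{abd_1}/ab)=1$ for every such fan.

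Now suppose for contradiction that two fans $F_{abd_1}$ and $F_{abd'_1}$ (with $d_1,d'_1 \notin \acl(ab)$) meet in two distinct points $p\neq p'$. Since $d$ is the predimension computing $\acl$-dimension in $T$ and $p,p'$ both lie in each fan, I would argue that $\{a,b,d_1,p,p'\}$ and $\{a,b,d'_1,p,p'\}$ both have $d$-closure of rank $1$ over $ab$ — more precisely that $d(pp'/ab)\le 1$ from membership in a single fan, hence $p'\in\acl(abp)$. But then, walking back along the path structure of $F_{abd'_1}$ that connects $p$ to $p'$ inside the second fan while $p,p'$ are pinned down as algebraic over $ab$, one forces $d'_1 \in \acl(ab)$, contradicting the choice of $d'_1$. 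The mechanism is exactly that of the proof of Lemma~\ref{disjointness}.3 and the remark preceding Lemma~\ref{fanuni}: a second point of intersection collapses the rank, i.e. $d(F_{abd_1}\cup F_{abd'_1}/ab)$ would have to be $<1+1$ in a way that is only consistent with one of the generators being algebraic. I would phrase this cleanly as: if $|F_{abd_1}\cap F_{abd'_1}|\ge 2$ then $d(F_{abd_1}\cup F_{abd'_1}/ab)\le 1$, whence every point of the union lies in $\acl(ab)$, so in particular $d_1,d'_1\in\acl(ab)$.

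The main obstacle I anticipate is bookkeeping the rank additivity across the inductive layers of the fan: I must make sure that adjoining the infinitely many paths that build up $F_{abd_1}$ never raises $d(\cdot/ab)$ above $1$, which requires that each newly generated path shares at least one point with what is already present and that the shared-point-forces-rank-$1$ principle of Lemma~\ref{disjointness} can be iterated. This is genuinely the content of Definition~\ref{fan} being well-behaved, and it is where I would be most careful; once it is in place, the two-point-intersection contradiction is the same one-line argument used repeatedly above. Everything else — finiteness of $\icl(ab)$, strongness of $ab$, submodularity — is quoted from Section~\ref{hrucon} and Lemma~\ref{disjointness}.
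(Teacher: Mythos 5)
Your proposal is correct in substance and follows exactly the route the paper intends: Lemma~\ref{fanuni} is stated with no displayed proof beyond the remark that it goes ``as in Lemma~\ref{disjointness}'', i.e.\ each fan has $d$-rank $1$ over $ab$ and a second intersection point collapses the predimension of a finite connected fragment of the union to $0$ over $\icl(ab)$, forcing the generators into $\acl(ab)$. The only slip is in your final ``clean'' phrasing: from two intersection points you should conclude $d(F_{abd_1}\cup F_{abd'_1}/ab)\le 0$ (not $\le 1$); the bound $\le 1$ is what one point of intersection already gives and does not imply algebraicity.
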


\begin{theorem}\label{fancover} If $M$ is countable and $\dim(M/N) =1$, then for any $a,b \in M$, $M$ is a union of fans
over $N$.  Inductively, the conclusion applies to any $M'\succ M$.
\end{theorem}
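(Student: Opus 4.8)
\textbf{Proof proposal for Theorem~\ref{fancover}.}

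The plan is to show that every point of $M$ lying outside $N$ belongs to some fan over $N$, and that the points of $N\cap M$ (equivalently, $\acl_M(N)$ restricted to $M$) are handled separately — but since $\dim(M/N)=1$ and $M$ is countable, $M-\acl(N)$ is exactly the ``new'' part, and the fans are defined to live off $\acl(ab)$. First I would fix $a,b\in M$. If $a,b$ are not both in $N$ the argument adapts, so assume for the main case $\{a,b\}\subseteq \acl(N)$ and note $\icl(a,b)\subseteq\acl(N)$ is finite. For any $d_1\in M-\acl(a,b)$ we have $d(d_1/ab)=1$ by Lemma~\ref{disjointness}(1), and since $\dim(M/N)=1$, any two such generators are interalgebraic over $N$; this is the key fact that will force the fans to exhaust $M-\acl(N)$. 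The point is that once we know $d_1$ and $d_1'$ are each of $d$-rank $1$ over $ab$ and $\dim(M/N)=1$, either they are already linked inside a common fan, or $d(d_1d_1'/ab)=2$, contradicting $\dim(M/N)=1$ once we also absorb $\acl(ab)\cap M\subseteq\acl(N)$.

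The key steps, in order: (1) Reduce to covering $M-\acl_M(ab)$, since $\acl_M(ab)\subseteq\acl_M(N)$ and $\dim(M/N)=1$ forces $\acl_M(N)\cap M$ to be the algebraic closure of $\{a,b\}$ together with the one generic generator — actually the cleaner statement is $M=\acl_M(N\cup\{d_1\})$ for any single $d_1\in M-\acl_M(N)$. (2) Take an enumeration $e_0,e_1,\dots$ of $M-\acl_M(ab)$; build fans greedily: put $d_1:=e_0$, form $F_{abd_1}$, and if some $e_i\notin F_{abd_1}$, claim it lies in $F_{abd_1}$ anyway — here is where the rank argument bites. (3) Show that for any $e\in M-\acl(ab)$, the point $e$ lies on some line through $a$ or through $b$ that is reachable from $d_1$ by the fan-generation process. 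This is the heart: one argues that because $d(ee_0/ab)\le\dim(M/N)=1$ while each of $d(e/ab)=d(e_0/ab)=1$, the configuration $\{a,b,e,e_0\}$ together with its $\icl$ has $\delta$-rank $1$, hence $e\in\acl(ab e_0)$, and then trace how $e$ enters the inductive construction of $F_{abd_1}$ using that two fans meeting in a point merge (Lemma~\ref{fanuni}) and that paths off $\acl(ab)$ are infinite and pairwise meet in at most one point (Lemma~\ref{disjointness}(2)–(4)). (4) Conclude $M-\acl_M(ab)\subseteq F_{abd_1}$, hence (adding finitely many fans, or absorbing $\acl_M(ab)\subseteq\acl_M(N)$ into the statement ``union of fans over $N$'') $M$ is a union of fans over $N$. (5) For the last sentence: if $M'\succ M$ then any $a,b\in M$ still have $\dim(M'/N)$ could exceed $1$, so instead pick a basis; the intended reading is that $M'$ is covered by fans over $N$ taken over the various elements of an $\acl$-basis of $M'$ over $N$ — apply the countable case along an elementary chain $M=M_0\prec M_1\prec\cdots$ with $\dim(M_{i+1}/M_i)=1$, covering each increment by fans, and take the union.

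The main obstacle I expect is step (3): showing that an \emph{arbitrary} $e\in M-\acl(ab)$ is actually produced by the inductive fan construction starting from a \emph{single} $d_1$, rather than merely that $e$ has $d$-rank $1$ over $ab$. Having rank $1$ over $ab$ and lying in $\acl(abd_1)$ does not visibly mean $e$ is on a line that the fan algorithm ever touches — one needs that the graph $G_M(a,b)$, restricted to $M-\acl(ab)$, is \emph{connected} in the relevant sense, i.e. that from any $b$-edge of $P_{abd_1}$ one can keep generating new $a$-lines, then new $b$-lines, and so on, until one reaches the line containing $e$. I would prove this connectivity by a rank/genericity argument: if $e$ were \emph{not} in $F_{abd_1}$, then $F_{abd_1}$ and $F_{abe}$ would be disjoint (by Lemma~\ref{fanuni}, since a single common point forces merging), but each has $d$-rank $1$ over $ab$ and they are not algebraically independent over $ab$ (both inside $M$ with $\dim(M/N)=1$ and $\{a,b\}\subseteq\acl(N)$), so $d(F_{abd_1}\cup F_{abe}/ab)\le 1$; combined with the structure of $\delta$ on the two fans (each contributes its own independent generator unless they share a point) this yields a contradiction — I'd make this precise using the submodularity axioms (Axiom~\ref{yax}) and the fact that $d(X/ab)=1$ for every finite $X$ in an infinite path (Lemma~\ref{disjointness}(3)). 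The bookkeeping in formalizing ``not algebraically independent $\Rightarrow$ fans share a point'' for these non-definable rank-$1$ sets is the delicate part, but it is exactly parallel to the disjointness arguments already carried out in Lemma~\ref{disjointness}.
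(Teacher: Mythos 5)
Your proposal diverges from the paper at the crucial point, and the divergence introduces a gap. The theorem only asserts that $M$ is a countable \emph{union} of fans over $N$, and the paper's proof is exactly the greedy enumeration you sketch in step (2) and then abandon: enumerate $M-N$ as $\langle e_i : i<\omega\rangle$, fix $a,b\in N$, let $F_0=F_{abd_1}$ with $d_1=e_0$, and at stage $n+1$ seed a new fan at the least $e_j\notin N\cup F_n$. Since $a,b\in N$ and $N$ is algebraically closed, every $e_j\in M-N$ lies outside $\acl(ab)$ and so is a legitimate seed; hence $\bigcup_{n<\omega}F_n\cup N=M$ by construction, and that is essentially the whole proof. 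The hypothesis $\dim(M/N)=1$ is used only to observe, as the paper does, that the resulting fans are algebraically entangled even though any two meet in at most one point.

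Your step (3) --- that a \emph{single} fan $F_{abd_1}$ already covers $M-\acl(ab)$ because any $e\in M-\acl(ab)$ is interalgebraic with $d_1$ over $ab$ --- is the gap. Interalgebraicity is witnessed by some finite configuration of $R$-instances of $d$-rank $1$ over $ab$, but nothing forces those instances to be lines through $a$ or through $b$; the fan-generation process of Definition~\ref{fan} only ever follows $a$- and $b$-coloured edges, and the paper explicitly remarks at the end of its proof that ``many instances of $R$ are not in the graph.'' So $e\in\acl(abd_1)$ does not yield $e\in F_{abd_1}$, and the connectivity of $G_M(a,b)$ restricted to $M-\acl(ab)$ that your rank argument would need is neither established nor, apparently, true in general; Lemma~\ref{fanuni} bounds how fans can intersect but does not make them merge. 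Dropping the single-fan claim and keeping the greedy enumeration (one new fan per uncovered point, countably many in all) gives the statement as intended; your treatment of the elementary-chain case in step (5) is then consistent with the paper's closing sentence.
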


\begin{proof} Let $\langle e_i: i<\omega\rangle$ enumerate $N-M$. Fix any $a,b \in N$,
 choose $e_0= d_1 \in M-N$ and let $F_0$ be the fan $F_{abd_1}$.
Now for each $n$, let $d_{n+1}$ be $e_j$ for the least $j$ such that $e_j
\not \in N \cup F_n$.  Clearly $\bigcup_{n< \omega} F_n \cup N = M$. Since
the dimension $N/M$ is $1$, there will be algebraic relations among the fans.
However, any two can intersect in at most one point and by construction there
graph edges ($a$ or $b$ lines) that are not in one of the listed fans.
However, many instance of $R$ are not in the graph.
%
%
\end{proof}

\subsection{No Perfect Path graphs}\label{noperf}
\numberwithin{theorem}{subsection} \setcounter{theorem}{0}

Cameron and Webb \cite{CameronWebb} extend to infinite structures the notion
of a perfect Steiner triple system as one in which each cycle graph $G(a, b)$
is a single cycle.  They find $2^{\aleph_0}$ countable such Steiner triple
systems.  In line with Definition~\ref{pathgraph},
 we can extend this definition to any $q$-block algebra.   However,
  we show none of the $q$-Steiner systems satisfying a $T$ obeying Assumption~\ref{assT}
  are perfect. Clearly there can be no uncountable perfect Steiner $k$
system in any
 reasonable sense since whatever replaces `cycle' will be countable. We will
 take the weakest plausible notion, which includes a single path or a fan;
 we show no such complex  covers
 $M-\acl({ab})$, when $M \models T$.  In Theorem~\ref{fancover},
 we covered $M-N$ by at most $|M-N|$
 fans, but not finitely many.

\begin{definition}[Perfect]\label{defperf} If $(M,*,R) \models T$
we say  $(M,*,R)$ is a {\em
perfect $q$-Steiner system} if for some finitely generated $R$-closed set
(Definition~\ref{Rcl}) $X = M-\acl({ab})$.
\end{definition}

Since every line in a Steiner system associated with a $q$-Steiner system is
two-generated as a quasigroup, we can think of
$R$-closure as finding the generated sub-quasigroup.  
Omer Mermelstein suggested the key idea for the  proof for the following
result.

\begin{lemma}\label{Rdiminf} If $M$ is a model of $T$,
  $A\leq M$, and  $|M-A|$ is infinite,
then $M$ has infinite $R$-dimension.
\end{lemma}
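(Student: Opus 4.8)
The plan is to show that a single line (or more generally any finite $R$-closed set) can absorb only boundedly many elements of $M-A$, and then iterate. First I would recall from the axioms that $A \le M$ means $d(M/A) \ge 0$, so that for any finite $B$ with $A \subseteq B \subseteq M$ we have $\delta(B) \ge \delta(A) = d(A)$; in particular the predimension is ``anchored'' along $A$. The key observation, which I would attribute as in the statement to Mermelstein, is that if $X \subseteq M$ is finite and $R$-closed (hence a union of full lines, so a sub-structure which is a partial Steiner system all of whose lines have length $q$), then each line of $X$ contributes exactly $q-2$ to $\sum_{\ell} \mathbf{n}_X(\ell)$, and a short count using $\delta(X) \ge 0$ bounds the number of lines, hence $|X|$, in terms of the number of points of $X$ that lie in $A$ together with $d$-data over $A$. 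More precisely: since two points determine a line, the lines of $X$ through a fixed point $p$ partition $X \setminus \{p\}$ into blocks of size $q-1$, so $|X| - 1$ is divisible by $q-1$ and the number of lines through $p$ is $(|X|-1)/(q-1)$; combining the inequality $\delta(X) \ge 0$, i.e. $|X| \ge \sum_{\ell \in L(X)} (q-2)$, with the double-count of flags gives a bound on $|L(X)|$ and hence on $|X|$ purely in terms of $|X \cap A|$ (the points of $X$ free over the rest).

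The main step is therefore: \emph{there is a function $g$ such that every finite $R$-closed $X \subseteq M$ satisfies $|X \setminus A| \le g(|X \cap A|)$.} I would prove this by the $\delta$-count just sketched, using that $X$, being a union of length-$q$ lines, has $\delta(X/X \cap A) \ge \delta(M/A) \cdot(\text{nothing negative})$ — more carefully, $X \cap A \le X$ would need checking, but in any case $\delta(X) \ge \delta(X \cap A) \ge 0$, and the flag count forces $|L(X)| \le |X|/(q-2)$ while simultaneously $|L(X)| \ge$ (something growing faster than linearly in $|X|$ once $|X \cap A|$ is fixed), because each point of $X \setminus A$ that is ``new'' must, by linearity of the space, lie on lines meeting the already-built part, and $\delta \ge 0$ caps how many genuinely new points each new line can carry. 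Chasing the arithmetic, a set $R$-generated by $m$ points over nothing has $\delta$ dropping below $0$ once it exceeds a size depending only on $m$ and $q$ — this is exactly the phenomenon behind ``finite $R$-closure of a finite set'' noted after Definition~\ref{strongdef} — so $R$-closure of a finite subset of $M$ is finite, with size bounded by $g(\text{number of generators in } A) + (\text{generators in } M\setminus A)$, and crucially the $M\setminus A$ contribution is controlled because $A \le M$ prevents $\delta$ from being ``subsidized'' by elements outside $A$.

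Granting that bound, the conclusion is immediate by contradiction: suppose $M$ had finite $R$-dimension, say $M = \cl_R(Y)$ for a finite $R$-independent $Y$. Write $Y = Y_A \sqcup Y'$ with $Y_A \subseteq A$ and $Y' \subseteq M \setminus A$ finite. Then $M = \cl_R(Y)$ is a finite $R$-closed set (it is its own $R$-closure), so by the bound $|M \setminus A| \le g(|Y_A| + |Y'|) < \infty$ — here I fold $Y'$ into the ``base'' by noting $|Y'|$ is finite — contradicting $|M - A|$ infinite. Hence $M$ has infinite $R$-dimension.

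The hard part will be pinning down the arithmetic of the second paragraph into a clean inequality: one must be careful that $X \cap A$ need not be $\le X$ and that the lines of $X$ may meet $A$ in various ways, so the honest statement is about $\delta(X) \ge 0$ plus the linear-space flag count, not about relative predimension over $A$. I expect the estimate to come out as follows — each line of $X$ has $q-2$ ``excess'' points beyond its first two, a point off the generating set lies on at least one line meeting the generated part and contributes at most $q-2$ truly new points per such line, and $\delta(X)\ge 0$ says total excess $\le |X|$; solving, $|X|$ is bounded by a polynomial in the number of generators and in $q$. The role of $A \le M$ is precisely to guarantee that when we enlarge a finite strong set by elements of $M$, the predimension cannot go negative, so the generated set never ``runs away'' — without this hypothesis the lemma is false (take $A = \emptyset$ in a system with infinite $R$-dimension). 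I would present the flag/predimension count as the single displayed computation and keep the rest prose.
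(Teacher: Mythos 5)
Your argument hinges on the claim that a finite $R$-closed subset of $M$ has size bounded in terms of its trace on $A$, and, more fundamentally, that the $R$-closure of a finite set is finite because ``$\delta$ drops below $0$ once the generated set gets too large.'' That claim is false, and the lemma you cite for it does not say what you need: the remark after Definition~\ref{strongdef} asserts finiteness of the \emph{intrinsic} (self-sufficient) closure $\icl(B)$, not of the $R$-closure $\cl_R(B)$ of Definition~\ref{Rcl}. These are very different closures. When a new point $c$ is added on a single line based in the current set, $|X|$ goes up by $1$ and the nullity sum also goes up by $1$, so $\delta$ is \emph{unchanged}; only a point lying on two or more lines based in the current set makes $\delta$ drop. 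Consequently a finite set can $R$-generate an infinite set while $\delta$ stays constant at every finite stage --- this is exactly the infinite $(a,b)$-chain $a*d_1=d_2$, $b*d_2=d_3,\dots$ of Section~\ref{pg}, each $d_{i+1}$ lying in $\cl_R(\{a,b,d_i\})$, and the paper explicitly notes that its generics are \emph{not} locally finite for this reason. So your concluding step (``$M=\cl_R(Y)$ is finite, contradicting $|M-A|$ infinite'') collapses: $\cl_R(Y)$ can be infinite, and no cardinality contradiction is available.

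The actual proof runs in the opposite direction: instead of bounding $R$-closed sets from above, one exhibits infinitely many $R$-independent elements. The two ingredients are (i) for any finite $E\subseteq \cl_R(Aa)-A$ one has $\delta(E/A)=0$ (by the computation above, each new point contributes $+1$ to the cardinality and at least $+1$ to the nullity), so if $C$ is $0$-primitive over $A$ and $a\notin C\cup A$, then $\cl_R(Aa)$ cannot meet $C$ --- a first point of intersection would add an extra edge and force $\delta((E\cup C)/A)<0$, contradicting $A\leq M$; and (ii) there are infinitely many pairwise incomparable $0$-primitive extensions $C_j$ of $A$ realized in $M$. Choosing a seed $a_j$ in each $C_j$, the closures $\cl_R(Aa_j)$ are pairwise disjoint over $A$, witnessing infinite $R$-dimension. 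Your instinct that $A\leq M$ is what prevents $\delta$ from being ``subsidized'' from outside $A$ is correct and is exactly what makes step (i) work, but it must be deployed to separate the generated sets from the primitives, not to bound their size.
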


\begin{proof} We first show that if $C$ is $0$-primitive  over $A$ and $a \not \in
C\cup A$, $A^*$, the $R$-closure of $Aa$, does not intersect $C$.  Note by
induction that
every finite $E \subseteq (A^*-A) $ satisfies $\delta(E/A) = 0$. Now, fix
an enumeration $A^*$ such that $e_j \in \cl_R(\{e_i:i<j\} = E_j$. Suppose
for contradiction $A^* \cap C \neq \emptyset$ and choose the least $k$ with
$e_k \in C \cap A^*$. But then $e_k$ witnesses  an edge between $C$ and
$E_{j+1}$; this implies $\delta((E_j \cup C)/A) < 0$, contrary to
hypothesis.

There  are infinitely many incomparable $0$-primitives $C_j$ over $A$
(\cite[Lemma 4.11]{BaldwinPao}; choose successively, a seed $a_j$ in each
$C_j$. Applying the first paragraph, we see the $\cl_R(Aa_j)$ are mutually
disjoint. By constructing $\langle A_j,A^*_j\rangle$ by the procedure of the
last paragraph, we witness infinite $R$-dimension.
\end{proof}

%
%
%

\medskip
Since a perfect Steiner system is the $R$-closure of finitely many elements,
we have immediately from Lemma~\ref{Rdiminf}:

\begin{corollary}\label{noperfthm}
If $(M,*,R) \models T$ satisfies Assumption~\ref{assT},
$(M,*,R)$ is not a Steiner perfect system.
\end{corollary}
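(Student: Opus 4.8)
The plan is to reduce Corollary~\ref{noperfthm} to Lemma~\ref{Rdiminf} via the definition of a perfect $q$-Steiner system (Definition~\ref{defperf}). First I would observe that if $(M,*,R)\models T$ satisfies Assumption~\ref{assT}, then $M$ is a model of a strongly minimal theory, so $M$ is infinite and, more to the point, $M-\acl(ab)$ is infinite for any $a,b\in M$: indeed $\acl_M(ab)$ is the algebraic closure of a finite set in a strongly minimal (hence $\aleph_0$-stable, uncountably categorical) structure, so it is at most countable, while if $M$ were countable with $M-\acl(ab)$ finite then $M$ would itself be algebraic over $ab$, contradicting that the generic type over $ab$ has infinitely many realizations in any model extending it; and in the uncountable case $|M-\acl(ab)|=|M|$ is certainly infinite. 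So in all cases $|M-\acl(ab)|$ is infinite.

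Next I would recall that $\acl_M(ab)$ is a strong subset: writing $A=\acl_M(ab)$, we have $A\leq M$ (this is the standard fact in the Hrushovski framework that algebraic closure of a finite set coincides with its intrinsic closure, which is strong; it is exactly the content of Definition~\ref{strongdef} together with the machinery of Section~\ref{hrucon}, and $\icl(ab)$ finite $\leq M$ is noted right after Definition~\ref{strongdef}, with $\acl=\icl$ in these constructions by the role of $\mu$ in bounding $0$-primitive extensions). Hence the hypotheses of Lemma~\ref{Rdiminf} are met with this $A$: $A\leq M$ and $|M-A|$ infinite. Therefore $M$ has infinite $R$-dimension, i.e.\ there is no finite subset $X_0\subseteq M$ with $\cl_R(X_0)=M$.

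Finally I would invoke Definition~\ref{defperf}: if $(M,*,R)$ were a perfect $q$-Steiner system, there would be a finitely generated $R$-closed set $X$ with $X=M-\acl(ab)$ for some $a,b$. But then $M=\acl_M(ab)\cup X$ where $\acl_M(ab)$ is finite-dimensional over $\emptyset$ (dimension $\leq 2$ more than $\dim\acl(\emptyset)$, and in any case $\acl(ab)$ is contained in the $R$-closure of a fixed finite set — namely $\icl(ab)$ together with generators of $\acl(\emptyset)$, or one simply notes $\acl(ab)\subseteq\cl_R(\text{finite})$ is not literally needed), so adjoining the finite generating set of $X$ to $a,b$ exhibits $M$ as $\cl_R$ of finitely many elements once one checks that $\acl_M(ab)\subseteq\cl_R(\{a,b\}\cup Y)$ is not required — rather, the cleanest route is: a perfect system is by definition $M-\acl(ab)=\cl_R(Z)$ for finite $Z$, and since every point of $\acl_M(ab)$ lies on a line meeting $M-\acl(ab)$ in $\geq 2$ points (two points determine a line, and lines are $q\geq 3$ long), $\acl_M(ab)\subseteq\cl_R(M-\acl(ab))=\cl_R(Z)$, whence $M=\cl_R(Z)$ has finite $R$-dimension. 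This contradicts Lemma~\ref{Rdiminf}. The main obstacle is the bookkeeping in this last step: making precise that the finitely-many points of a perfect complex together recover all of $\acl(ab)$ under $\cl_R$, which uses that every point of a Steiner system lies on plenty of lines hitting the complement of any coalgebraically-small set; once that is in hand the contradiction with Lemma~\ref{Rdiminf} is immediate.

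\begin{proof}
Let $(M,*,R)\models T$ satisfy Assumption~\ref{assT} and suppose, for contradiction, that $(M,*,R)$ is a perfect $q$-Steiner system. By Definition~\ref{defperf} there are $a,b\in M$ and a finitely generated $R$-closed set $X$ with $X=M-\acl_M(ab)$; fix a finite $Z$ with $\cl_R(Z)=X$.

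Since $T$ is strongly minimal, $\acl_M(ab)$ is finite-dimensional, and the generic type over $ab$ has infinitely many realizations in $M$ (as $M$ is infinite); hence $M-\acl_M(ab)$ is infinite. In the Hrushovski framework of Section~\ref{hrucon}, $\acl_M(ab)$ coincides with $\icl_M(ab)$-type self-sufficient closure and is strong in $M$; write $A=\acl_M(ab)$, so $A\leq M$ and $|M-A|$ is infinite.

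Every point $p\in A$ lies, together with any $c\in M-A$, on the unique line $\overline{pc}$, which has length $q\geq 3$; choosing $c\in M-A$ and a third point $c'$ on $\overline{pc}$ with $c'\neq p$, if $c'\in A$ replace $c$ by another point of $M-A$ on a different line through $p$ (there are $q-1\geq 2$ such lines, and only finitely many can meet the infinite set $M-A$ in fewer than two points—in fact all of them meet it in $q-1$ or $q-2$ points once $M-A$ is large). Thus $p\in\cl_R(M-A)=\cl_R(X)=\cl_R(Z)$. As $p\in A$ was arbitrary and $X=M-A\subseteq\cl_R(Z)$ trivially, we conclude $M=A\cup X\subseteq\cl_R(Z)$, so $M$ has finite $R$-dimension.

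This contradicts Lemma~\ref{Rdiminf} applied to $A\leq M$ with $|M-A|$ infinite, which gives that $M$ has infinite $R$-dimension. Hence $(M,*,R)$ is not a perfect Steiner system.
\end{proof}
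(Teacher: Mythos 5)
Your proof is correct and takes essentially the same route as the paper: both reduce the statement to Lemma~\ref{Rdiminf} applied to a strong algebraically closed set with infinite complement. The paper simply asserts that a perfect system is the $R$-closure of finitely many elements, whereas you additionally verify the small bridging step that $\acl_M(ab)\subseteq\cl_R(M-\acl_M(ab))$ (which holds because any line through $p\in\acl_M(ab)$ and a point of the complement meets the complement in $q-1\geq 2$ points, the complement's two-point lines being absorbed by $R$-closedness of $\acl_M(ab)$); this is a worthwhile detail the paper elides.
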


%
%
%

\begin{question} In \cite{BaldwinVer}, we show the definable closure of a
strongly minimal system $(M,R)$ is essentially unary if $T_\mu$ is triplable
(For any primitive $C/B)$, $\mu(C/B) \geq 3$.). In the expanded vocabulary
 $\tau'$, models of $T_{\mu',V}$ have $*$ as a non-trivial binary function. But,
assuming $\mu'$ is triplable, are there any binary functions that are not
polynomials in $*$?
\end{question}

\section{Uniform Path graphs and $2$-transitive structures }\label{techs}

In Section~\ref{smba}, we studied theories  $T$ which satisfied the
properties of $T^q_{\mu',V}$ of quasigroups built by a Hrushovski style
construction as in Section~\ref{consquasi} where $\mu' \in \Uscr_{\tau'}$ and
for any $M\models T_{\mu',V}$, $\acl_M(\emptyset) \neq \emptyset$.  Unlike
the previous  section, we now make major modifications to the construction to
consider subsets where algebraic closure has few pseudo-cycles and to find
$2$-transitive structures.  Thus, we return to the complicated notation
$T^q_{\mu',V}$ to clarify where the construction is changing.

In Section~\ref{indepcase} we found examples where all cycles were infinite
 when we took the domain of the path graph as $M-\acl(ab)$. But in
 Section~\ref{primecase} with domain $M_0-\icl(ab)$  we always had finite
 cycles and the existence of infinite cycles in the prime model is an open problem. In this
 section we restrict our attention to the domain, $M-\icl(ab)$. We first
 (Section~\ref{infcyc}) modify the construction to be able to specify
 which, if any, finite cycles occur. In Section~\ref{unif} we introduce the
 notion of a uniform (The isomorphism type of $G_M(a,b)$ does not depend
 on the choice of $a,b$.) $q$-Steiner system (generalizing \cite{CameronWebb,
 Chicoetal}). Then by different methods in Sections \ref{uniformity} and
 \ref{finfincyc} we construct families of  $2$-transitive and hence uniform $q$-Steiner
  systems.

%
%
%

We use two model theoretic methods    to solve some problems suggested from
the study of cycle graphs in \cite{CameronWebb}. These methods modify the
theory $T_{\mu',V}$ either by changing $\mu$ or, more drastically,
restricting the class $\bK_0$ of finite structures.  And then we combine
the two in Section~\ref{finfincyc}.

\subsection{ All paths are infinite }

\label{infcyc}

\numberwithin{theorem}{subsection} \setcounter{theorem}{0}

 In this section, we find $T_{\mu'',V}$ whose models have no finite cycles.
  It is then easy
 to allow certain specified finite lengths of cycles.
%
%
The key point here is to vary the class $\Uscr_{\tau'}$ from
Definition~\ref{defdelmu}.4 maintaining the amalgamation  so the resulting
generic model is strongly minimal but preventing finite cycles. As in
Section~\ref{consquasi}, we work in a vocabulary $\{H,R\}$, where $R$ is
collinearity in a linear space and model $H$ is the graph of a quasigroup
operation $*$.  We introduce a set $\Bscr$  of $\mu''$ obtained by
 modifying $\mu'\in \Uscr_{\tau'}'$ to $\mu''$ by changing the value {\em only on}
  the isomorphism types good pairs $C/\{a,b\}$ which are pseudo-cycles.
  As $\Bscr $ and $\Uscr_{\tau'}$ differ on pseudocycles, apparent  contradictions between here and Section~\ref{smba}
  are resolved.

  \begin{definition}\label{defbscr}
 Recall from Definition~\ref{K'mu} that $\boldsymbol{\gamma_n}$ denotes an
  isomorphism type of a pseudo-cycle over a two element set. Let $\Bscr$
  denote the set of $\mu''$ obtained by for every $n$, redefining each
   $\mu'\in \Uscr_\tau$ to $\mu''$ by setting
 $\mu''(\boldsymbol{\gamma_n}) = 0$ for each $n$.

\end{definition}


We define a class $\bK'_{\mu'',V}$ whose generic has only infinite
cycles. Thus there are no finite cycles in any model of $T_{\mu'',V}$.

\begin{lemma}\label{apmu2}
If $\mu'' \in \Bscr$, for each $q,V$,the class of $\tau' =
\{*,R\}$-structures $\bK^{q}_{\mu'',V}$ from Definition~\ref{K'mu} has the
$\leq$-amalgamation property.   If $\mu''\in \Bscr$, every model of
$T_{\mu'',V}$ has only infinite cycles.
\end{lemma}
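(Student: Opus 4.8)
The plan is to prove the two assertions of Lemma~\ref{apmu2} in turn, reducing both to results already established in Section~\ref{consquasi} plus a bookkeeping check that the modification from $\mu'$ to $\mu''$ does not interfere with amalgamation. First I would observe that $\mu''\in\Bscr$ is obtained from some $\mu'\in\Uscr_{\tau'}$ by lowering the value on each pseudo-cycle type $\boldsymbol{\gamma_n}$ to $0$; in particular $\mu''$ still satisfies the two defining inequalities of $\Uscr_{\tau'}$ (Definition~\ref{defdelmu}.4), namely $\mu''(A'/B')\geq\delta_{\tau'}(B)$ for good pairs and $\mu''(\boldsymbol{\alpha}_q)\geq 1$, since a pseudo-cycle $C/\{a,b\}$ has base of size $2$ with $\delta_{\tau'}(\{a,b\})=0$, so dropping its value to $0$ respects $\mu''(\boldsymbol{\gamma_n})\geq\delta_{\tau'}(\{a,b\})=0$, and $\boldsymbol{\gamma_n}\neq\boldsymbol{\alpha}_q$. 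Hence the hypotheses needed to run the amalgamation argument of Theorem~\ref{getsmquasigrp} are still in force.

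Next I would carry out the amalgamation proof itself, following the proof of Theorem~\ref{getsmquasigrp} verbatim: take $D,E,F\in\bK^{q}_{\mu'',V}$ with $D\subseteq F$ and $E$ $0$-primitive over $D$, form the disjoint (canonical) amalgam $G$, and check that $G\in\bK^{q}_{\mu'',V}$. The only thing that is not automatic from the earlier proof is verifying the $\mu''$-bound on copies of good pairs, and the single case that matters is when the (relational) type of the pair is a pseudo-cycle over a $2$-element set, for that is the only place where $\mu''$ differs from $\mu'$. Here the point is exactly the one used in the proof of Theorem~\ref{getsmquasigrp} (the ``further'' remark from \cite[Lemma 5.10]{BaldwinPao}): the canonical amalgam introduces no new line with two points in $E-D$ and two in $F-D$, so any copy of a pseudo-cycle over a base $B$ that appears in $G$ and is not already present in $F$ would have to have $B=D$ and be a fresh copy of $E$ over $D$; but pseudo-cycle extensions are good, hence when such a copy is forced by the $\mu''$-bound one performs the same identification as in \cite{BaldwinPao}, and since $\mu''(\boldsymbol{\gamma_n})=0$ no copy of a pseudo-cycle over any strong $2$-element base is ever realized at all, so there is nothing to amalgamate and no violation can occur. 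All other good pairs have $\mu''=\mu'$, so those cases are literally the earlier proof. This gives $\leq$-amalgamation for $\bK^{q}_{\mu'',V}$, and I would also record that smoothness is unchanged (the bound imposed by $\mu''$ is still universally axiomatized), so Fact~\ref{KL} yields the generic $\Gscr_{\mu''}$ and the strong-minimality argument (Lemmas 5.21, 5.23 of \cite{BaldwinPao}) goes through as in Theorem~\ref{getsmquasigrp}, producing the strongly minimal theory $T_{\mu'',V}$.

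For the second assertion, suppose toward a contradiction that some $M\models T_{\mu'',V}$ contains a finite $(a,b)$-cycle, i.e.\ a pseudo-cycle $C$ over $\icl(ab)$ with base $\{a,b\}$, of some type $\boldsymbol{\gamma_n}$. By Definition~\ref{pathgraph3}.(1b) a minimal such $C$ is $0$-primitive over $ab$, so $\delta_{\tau'}(C/ab)=0$ and $\{a,b\}$ (after passing to $\icl(ab)$, which is strong) provides a strongly embedded base; thus $C$ is a copy of a good pair of type $\boldsymbol{\gamma_n}$ realized in $M$. But every $M\models T_{\mu'',V}$ lies in $\hat\bK^{q}_{\mu'',V}$, so for the good pair $(\{a,b\},\boldsymbol{\gamma_n})$ we must have $\chi_M(\{a,b\},\boldsymbol{\gamma_n})\leq\mu''(\boldsymbol{\gamma_n})=0$, meaning there are no disjoint copies at all of such a $C$ over $\{a,b\}$ in $M$ — contradiction. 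Hence every cycle in every model of $T_{\mu'',V}$ is infinite.

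I expect the main obstacle to be the amalgamation check in the paragraph above, specifically making sure that after driving the pseudo-cycle bounds to $0$ one has not accidentally destroyed the canonical-amalgamation property (Axiom~\ref{yax}.5 / Definition~\ref{canamdef}) that the Hrushovski machinery needs — concretely, one must argue that forbidding all pseudo-cycles over strong pairs never forces an identification that would in turn create a $4$-point line or otherwise leave $\bK^{q}_{0,V}$, and that the required identifications are confined, exactly as in \cite[Lemma 5.10]{BaldwinPao}, to the $BC=DE$ situation. Once that localization is in hand the rest is routine transfer from Section~\ref{consquasi}.
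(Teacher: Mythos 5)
Your reduction of the amalgamation claim to Theorem~\ref{getsmquasigrp} rests on a computational error that undermines the strategy. For a two-element set $\{a,b\}$ carrying no non-trivial line, Definition~\ref{defdelrank} gives $\delta_{\tau'}(\{a,b\}) = 2 - 0 = 2$, not $0$ (this is precisely why Definition~\ref{Kmu'} has to make a special exception for $\boldsymbol{\alpha}$, whose base is also a pair). Consequently setting $\mu''(\boldsymbol{\gamma_n})=0$ \emph{violates} the defining inequality $\mu(C/B)\geq\delta_{\tau'}(B)$ of $\Uscr_{\tau'}$; the paper says so explicitly (``The difficulty is that the good pair $(C_k/B)$ does not satisfy the requirement $\mu(C_k/B)\geq\delta_{\tau'}(B)$''). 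So the hypotheses of the amalgamation argument of Theorem~\ref{getsmquasigrp} are \emph{not} still in force for the pseudo-cycle types, and a separate argument is mandatory for exactly those types.

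That separate argument is where your proposal has a genuine gap. When $\mu''(\boldsymbol{\gamma_n})=0$ the identification mechanism of \cite[Lemma 5.10]{BaldwinPao} is unavailable: there is no copy of $C_k$ in $F$ to identify an excess copy with, so one must prove that \emph{no} copy of any pseudo-cycle over any pair appears in $G=E\oplus_D F$ given that none appears in $D$, $E$, or $F$. Your assertion that a new copy ``would have to have $B=D$ and be a fresh copy of $E$ over $D$'' is not justified: a priori the base pair $\{a,b\}$ and the points of a new pseudo-cycle could be distributed across $E-D$ and $F-D$. This is where the paper does the real work, exploiting the structural fact that each point $c_i$ of a pseudo-cycle lies on a line through $a$ and a line through $b$, and running a case analysis ($\{a,b\}\subseteq F$; one of $a,b$ in $E-D$; the $C_i$ landing in $D$), each branch contradicting the primitivity of $E$ over $D$. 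That this step cannot be waved away is shown by Mermelstein's example immediately following the lemma: for a general good pair a copy \emph{can} materialize in the amalgam without appearing in any component, so the restriction to pseudo-cycles is essential to the argument. Your treatment of the second assertion (models omit finite cycles because $\chi_M(\{a,b\},C_k)\leq\mu''(\boldsymbol{\gamma_n})=0$) is fine.
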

\begin{proof}
We must check that we can complete the amalgamation while insisting that
for each $n$, $\boldsymbol{ \gamma_n}$ is omitted.  For this we must
slightly vary the proof of Lemma 5.10
in \cite{BaldwinPao}, whose notation we follow. Let $F, E \in
\bK^{q}_{\mu'',V}$. Now, let $G = E \oplus_{{D}} F$, where $(D,E)$ is a good
 pair (with $|E-D|
>1$) and $((a,b),C_k)$ is a good pair witnessing $\boldsymbol{\gamma}_k$ (So $C_k$ is a
pseudo-cycle.). The difficulty is that the good pair $(C_k/B)$  does not
satisfy the requirement $\mu(C_k/B) \geq \delta_{\tau'}(B)$.  We gave a
separate argument to show no $\boldsymbol{\gamma}_k$   blocks amalgamation;
the result then follows without change. There are no realizations of the
good pair $\boldsymbol{\gamma}$ in any of $D,E,F$; we must show it is not
 realized in $G$. The crux is that, by definition of $((a,b), C_k)$, for
 any $k,i$,
  each $c_i\in C_k$ is on a separate triple in $R$ with each of $a$ and
 $b$.  Now if $(a,b) \subseteq F$ (compare Case B.1 of \cite{BaldwinPao}),
 each $C_i$ must be contained in $F$ or else there is a clique
 ($ac_ic_{i+1})$, modulo renaming, with two elements in $F$ and one in
 $E-F$ contradicting the primitivity of $E$ over $D$. If  one of $a,b$, say
  $a$ is in $E-F$  then for each $i$, $C_i \subseteq E$ or the line between
  $a$ and $c_i$ is based in $D$ (Definition 3.11 of \cite{BaldwinPao}) and
  that is clearly impossible, since it contradicts that $E$ is primitive
  over $D$; so each $C_i \subseteq E$. But now, since $E$ doesn't realize
  $\gamma_n$, $b$ must be in $F-D$ and $C_i \cap (E-D) \neq \emptyset$; we
  get the same contradiction. So $C_i \subseteq D$.   But now $a \in E-D$
  is on a line based on $C_i \subseteq D$, contradicting the primitivity of
  $E$ over $D$. Thus for any $M \models T_{\mu'',V}$, $a,b \in M$ and $d_1 \not \in
  \icl(ab)$, $P_{abd_1}$ is infinite. So we finish.
  \end{proof}

A simple variant on the argument for Corollary 5.3 of \cite{BaldwinPao}
(Replace `for every $n$' in Definition~\ref{defbscr} by `for $n\in X^c$'.)
shows we can  omit arbitrary sets of $\boldsymbol{\gamma_n}$:

  \begin{theorem}\label{BPuniform}
For any $X \subseteq \omega$ of numbers divisible by $4$   and $\mu \in
\Uscr$, we can construct still another variant $\mu^X$ of $\mu$ such that
models of $T^q_{\mu^X,V}$ realize an $n$-pseudo-cycle if and only if $n\in
X$.
\end{theorem}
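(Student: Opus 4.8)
\textbf{Proof proposal for Theorem~\ref{BPuniform}.}
The plan is to reduce the claim to Lemma~\ref{apmu2} by the simple observation that the argument in that lemma is entirely \emph{local} in the index $n$: it shows, good pair by good pair, that omitting the isomorphism type $\boldsymbol{\gamma}_n$ of an $n$-pseudo-cycle never obstructs an amalgamation step, because any copy of a $\boldsymbol{\gamma}_n$ in a free amalgam $G = E\oplus_D F$ is forced (via the primitivity of $E$ over $D$, $R$-closure of $F$, and the fact that each $c_i\in C_n$ lies on a separate triple with each of $a,b$) to live entirely inside one of $D,E,F$, where by hypothesis it does not occur. None of that reasoning uses anything about $n$ beyond `$4\mid n$'; so it applies verbatim with the single quantifier `for every $n$' replaced by `for every $n\in X^c$' in Definition~\ref{defbscr}.

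Concretely, first I would fix $X\subseteq\omega$ consisting of multiples of $4$ and $\mu\in\Uscr$, and define $\mu^X$ by modifying $\mu$ (as extended to $\Uscr_{\tau'}$ via Definition~\ref{defdelmu}) only on the pseudo-cycle types: set $\mu^X(\boldsymbol{\gamma}_n)=0$ for $n\in X^c$ and leave $\mu^X(\boldsymbol{\gamma}_n)=\mu(\boldsymbol{\gamma}_n)$ (which is $\geq 1$) for $n\in X$; all other good pairs keep their $\mu$-value. Then I would invoke the amalgamation argument of Lemma~\ref{apmu2}: the class $\bK^q_{\mu^X,V}$ is smooth (the $\delta$-bound and the $\mu^X$-bound are still universally, resp.\ $AE$-, axiomatized, exactly as before), and for amalgamation one runs the case analysis of \cite[Lemma 5.10]{BaldwinPao} as in Lemma~\ref{apmu2}, treating each type $\boldsymbol{\gamma}_n$ with $n\in X^c$ by the separate localization argument and every other good pair — including $\boldsymbol{\gamma}_n$ for $n\in X$ — by the unchanged argument from \cite{BaldwinPao}, since for those the inequality $\mu^X(\boldsymbol{\gamma}_n)\geq\delta_{\tau'}(B)$ holds. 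This yields $\leq$-amalgamation and $\leq$-joint embedding, hence by Fact~\ref{KL} a countable generic $\Gscr_{\mu^X}$, and strong minimality of $T^q_{\mu^X,V}$ follows as in Lemmas 5.21 and 5.23 of \cite{BaldwinPao} together with Theorem~\ref{getsmquasigrp}, the $\mu^X$-modification being irrelevant to those steps.

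Finally I would verify the realization dichotomy. If $n\in X^c$, then $\mu^X(\boldsymbol{\gamma}_n)=0$ forces $\chi_M(ab,C_n)=0$ for every $M\models T^q_{\mu^X,V}$ and every base $ab$, so no $n$-pseudo-cycle embeds. Conversely, if $n\in X$, then as in Lemma~\ref{primeinfcyc} the structure $\icl_M(ab)\cup C_n$ lies in $\bK^q_{\mu^X,V}$ (since $\mu^X(\boldsymbol{\gamma}_n)\geq 1\geq\delta(ab)$ — note here we use $\delta(ab)=2$ versus the $\geq 1$ allowance, so one should check the base actually has predimension matching, exactly the point handled in \cite{BaldwinPao}) and embeds strongly into the saturated generic, producing an $n$-pseudo-cycle; by model completeness every model realizes it. The main obstacle I anticipate is bookkeeping rather than conceptual: one must be careful that modifying $\mu$ on the $\boldsymbol{\gamma}_n$ with $n\in X^c$ does not interact with the $0$-primitive extensions used to guarantee $\acl_M(\emptyset)\neq\emptyset$ or with the line-length invariant $\boldsymbol{\alpha}_q$ — but these are disjoint families of good-pair types, so the modifications are genuinely independent, which is precisely why the `replace the quantifier' recipe works. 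Hence Theorem~\ref{BPuniform} follows. $\qed_{\ref{BPuniform}}$
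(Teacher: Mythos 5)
Your proposal is correct and follows essentially the same route as the paper, which proves this theorem precisely by replacing ``for every $n$'' in Definition~\ref{defbscr} with ``for $n\in X^c$'' and rerunning the localized amalgamation argument of Lemma~\ref{apmu2}. Your additional verification of the realization dichotomy (non-realization from $\mu^X(\boldsymbol{\gamma}_n)=0$, realization for $n\in X$ via embedding into the generic as in Lemma~\ref{primeinfcyc}) is exactly the intended, if unstated, remainder of the argument.
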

One cannot simply modify $\Uscr$ to say  all points have  trivial algebraic closure
and carry out the amalgamation argument.
Omer Mermelstein provided the following counterexample, showing some
 restriction, such as to the $\gamma_n$, is necessary for
 Lemma~\ref{apmu2}.
Here is an amalgamation diagram where the good pair $C/B$ does not appear in
any of the
  components but is in the amalgam. Nevertheless, we give several examples in later sections
  where $\acl_{M_0}(\emptyset) = \emptyset$.


  \begin{example}{\rm Let $B$ consist of five points $a,b_1, \ldots b_4$ and $C$ consist of four points
  $c_1, \ldots c_4$, where $R(c_i,b_i,c_{i+1})$  for $i = 1, \ldots , 3$,
  $R(a,b_2,b_3)$, and $R(c_4,c_1,b_4).$ Then $C$ is $0$-primitive over $B$.
  But now if we let $D_1 = \{a,c_2,c_4\}$,  $D_1 = \{b_1,c_1, b_4\}$ and
  $D_2 = \{b_2,c_3, b_3\}$ we have $D_1 \leq D_1$ and $D_1 \leq D_2$, but
  $BC$ appears in the amalgam.
  }
  \end{example}
%

\subsection{Uniform $G(a,b)$}\label{unif}
\numberwithin{theorem}{subsection} \setcounter{theorem}{0}

\cite{CameronWebb} call a Steiner system uniform if all the cycle graphs
$G_M(a,b)$ are isomorphic.  \cite{Chicoetal} construct  $2^{\aleph_0}$
countable uniform sparse infinite Steiner triple systems.  We obtain
$2^{\aleph_0}$ families of countable uniform infinite Steiner systems for
each prime power $q$.

%

We adapt the Cameron-Webb notions of uniform
 \cite{CameronWebb} to accommodate $q$-Steiner systems.  Recall
 (Definition~\ref{pathgraph}) that the domain of $G_M(a,b)$ is $M-\icl(a,b)$.
We will consider cases where $\acl(a,b)$ is both finite and infinite.

\begin{definition}[Uniform]\label{defuni} We say a model $(M,*,R)$ of $T^q_{\mu',V}$ is {\em uniform}, if for
any $(a,b)$, $(a',b')$, $G_M(a,b) \simeq G_M(a',b')$.
\end{definition}

Here is a sufficient condition for uniformity.

\begin{lemma}\label{gettrans}
\begin{enumerate}\item If $(M,*,R)$ is  a model of a theory $T$ generated by a Hrushovski
class (Definition~\ref{hruclass}) of linear spaces
%
such that every two element set $A$ satisfies $A \leq M$, the
automorphism group of $(M,*,R)$ acts $2$-transitively on $(M,R)$. \item
Clearly, if the automorphism group of $(M,*,R)$ acts $2$-transitively on
$(M,*,R)$, $(M,*,R)$ is uniform.
\end{enumerate}
\end{lemma}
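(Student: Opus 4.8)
The plan is to prove part (1) — the $2$-transitivity of $\operatorname{Aut}(M,*,R)$ under the hypothesis that every two-element set is strong in $M$ — and then observe that part (2) is immediate, since an isomorphism $G_M(a,b) \simeq G_M(a',b')$ is witnessed by any automorphism carrying the pair $\{a,b\}$ to $\{a',b'\}$ (the graph $G_M(a,b)$ is defined purely in terms of $\icl(ab)$ and the relation $R$, both of which are preserved by automorphisms).

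For part (1), I would first recall two standard facts about generics of Hrushovski sm-classes, both available from the machinery cited earlier. First, $M$ is $\leq$-homogeneous: any isomorphism between strong finite substructures of $M$ extends to an automorphism of $M$ (this is the content of Fact~\ref{KL} together with the fact that $M$ is the generic). Second, strong substructures are exactly the self-sufficiently closed ones, and by hypothesis any two-element subset $A = \{a,b\}$ satisfies $A \leq M$. Now given two pairs of distinct points $(a,b)$ and $(a',b')$, consider the two finite substructures of $M$ induced on $\{a,b\}$ and on $\{a',b'\}$. Each is a two-element structure in the vocabulary $\tau' = \langle *, R\rangle$; since $R$ holds only of distinct triples, $R$ is empty on a two-element set, and the quasigroup operation restricted to $\{a,b\}$ need not be closed — but what matters is the induced $\tau'$-substructure in the model-theoretic sense, where $H = $ graph of $*$ contributes only those triples entirely inside the set. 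There are only finitely many isomorphism types of such two-element structures, and I would argue they are all isomorphic here (e.g. $a*a$, $a*b$, etc. either all lie outside the pair or the constraints from the variety $V$ pin down the type uniformly); more robustly, one passes to the pair together with the relevant closure. The key step: $\{a,b\} \leq M$ and $\{a',b'\} \leq M$ and the induced structures are isomorphic via the map $a \mapsto a'$, $b \mapsto b'$, so $\leq$-homogeneity extends this to an automorphism of $M$.

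The main obstacle I anticipate is handling the vocabulary $\tau'$ rather than just $\tau = \langle R \rangle$: in $\tau'$ the function $*$ is present, so the "finitely generated substructure" generated by $\{a,b\}$ is not $\{a,b\}$ itself but the sub-quasigroup it generates, which is the whole line $\overline{ab}$ of length $q$ (and possibly larger once one closes under $*$ using points off the line — though in a Mikado $(2,q)$-variety the $2$-generated subalgebra is exactly $F_2(V)$, of size $q$). So the right statement is that the $q$-element line $\overline{ab}$, viewed as a $\tau'$-structure, is isomorphic to $\overline{a'b'}$ (both are copies of $F_2(V)$ with $R$ the full clique), and one needs $\overline{ab} \leq M$. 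This should follow from $\{a,b\} \leq M$: since $\delta_{\tau'}$ ignores $H$ and $\delta_\tau(\overline{ab}) = \delta_\tau(\{a,b\})$ (adding the $q-2$ extra collinear points contributes $+（q-2)$ to $|\cdot|$ and $+(q-2)$ to the nullity sum, net zero), we get $\overline{ab} \leq M$ whenever $\{a,b\} \leq M$. Then $\leq$-homogeneity applied to the isomorphism $\overline{ab} \to \overline{a'b'}$ matching $a,b$ to $a',b'$ gives the desired automorphism, and restricting attention to its action shows it sends $(a,b)$ to $(a',b')$, proving $2$-transitivity of $\operatorname{Aut}(M,*,R)$ on $(M,R)$. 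Part (2) then follows as noted.
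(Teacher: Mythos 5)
Your argument for part (1) establishes $2$-transitivity only for the \emph{generic} model $\Gscr_\mu$: the $\leq$-homogeneity you invoke ("any isomorphism between strong finite substructures extends to an automorphism") is a property of the generic, obtained from Fact~\ref{KL}, and you explicitly lean on "the fact that $M$ is the generic." But the lemma asserts $2$-transitivity for \emph{every} model $(M,*,R)$ of $T$ satisfying the hypothesis, and an arbitrary model of $T_\mu$ (e.g.\ the prime model $M_0$ in the tower) is not the generic and is not $\leq$-homogeneous in that sense. This is the gap. The paper closes it in two further steps that your proposal omits: first, $2$-transitivity of the generic means the complete theory $T$ has a single $2$-type over $\emptyset$, hence so does every model; second, every model of a strongly minimal theory is finitely homogeneous in the model-theoretic sense --- finite tuples realizing the same complete type are automorphic (Baldwin--Lachlan, \cite[Theorem 5]{BaldwinLachlansm}) --- so in any model any two pairs are conjugate under $\mathrm{Aut}(M)$. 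Without some such transfer argument your proof does not reach arbitrary models.

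On the positive side, your treatment of the generic is sound and in one respect more careful than the paper's one-line version: in the vocabulary $\tau'$ the $\leq$-homogeneity of the generic is naturally applied to the $*$-substructure generated by $\{a,b\}$, i.e.\ the line $\overline{ab}\cong F_2(V)$, and your observation that $\delta_\tau(\overline{ab})=\delta_\tau(\{a,b\})$ (each added point contributes $+1$ to the cardinality and $+1$ to the nullity of the line) so that $\{a,b\}\leq M$ implies $\overline{ab}\leq M$ is exactly the right bookkeeping; since any two such lines are isomorphic copies of $F_2(V)$ by an isomorphism sending $a,b$ to $a',b'$, the generic case goes through. Your part (2) matches the paper. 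To complete the proof, append the single-$2$-type plus finite-homogeneity argument above.
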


\begin{proof} Since all pairs $(a,b)$  are isomorphic and each is embedded strongly
in the generic $\Gscr$, the result is immediate for $\Gscr$.  But this
transitivity  extends to all models since if one model of a complete theory
has a single $2$-type, all models do. And, each model of a strongly minimal
theory is finitely first order homogeneous (finite sequences realizing the
same first order type are automorphic) (e.g.\! \cite[Theorem
5]{BaldwinLachlansm}).
\end{proof}

\subsection{$2$-transitive $M$, $3$-Steiner systems, Changing $\bK_0$
}\label{uniformity}

 \numberwithin{theorem}{subsection} \setcounter{theorem}{0} In
 Section~\ref{infcyc} we showed  that, by modifying the set of possible
 $\mu$, we could ensure that there were no finite pseudo-cycles. The
Steiner system in Section~\ref{infcyc} was far from uniform as there were
many 2-types, e.g. pairs with  non-isomorphic algebraic closures. (We only
restricted those primitive extensions that were pseudo-cycles.)


We have dealt with two variants of the Hrushovski construction. Recall that
in the linear space case we used $\bK_0$ to play the role of $\bL_0$ in
Notation~\ref{hruclass}.  We constructed generics in both $\tau$ and $\tau'$,
with the same basic construction. But in the more general context of
Definition~\ref{hruclass} we can restrict $\bK_0$ before beginning the
construction
and realize the hypothesis of the general statement of
Lemma~\ref{gettrans}.1.




 In Section 5.2 of \cite{Hrustrongmin}, Hrushovski proves there are
$2^{\aleph_0}$ strongly minimal $\tau$-structures with pairwise
non-isomorphic associated combinatorial geometries. He achieves this by
ensuring that algebraic dependence of a triple $a,b,c$ is equivalent to
$R(a,b,c)$. Mermelstein pointed out to me that these structures are in fact
Steiner triple systems. We will see that they are $2$-transitive and every
cycle is infinite.  Example~\ref{exhru} is considerably more restrictive than
the linear space examples; it not only forces that two points
 determine a line  but also that every full line
  has 3 points.  In Theorem~\ref{restrictk0} we show less drastic surgery
  on the \cite{BaldwinPao} construction still allows us to find uniform
  $G(A,B)$-graphs when $q >3$.

\begin{example}\label{exhru} \cite[Example 5.2]{Hrustrongmin} 
{\rm We denote the theories described in  this example by $T_{H, \mu}$.
 The dimension function $\delta_H$ is the usual: $\delta_H(A) = |A| - |R|$,
where $|R|$ is the number of $3$-element subsets of $A$ satisfying $R$ and
strong submodel is defined in usual way. The novelty was in use of the
$\delta$-condition to define $\bK^H_0$. Namely, the collection  of finite
structures $C$ such that every subset $B$ of $C \in \bK^H_0$  with power at
most $3$ is strong in $C$:

$$(*) \hspace{3pt} \bK^H_0 = \{A\colon B\subseteq A
 \wedge |B| \leq 3 \rightarrow B\leq  A\}.$$

Since the  amalgamation of Hrushovski's basic example added no edges, this
subclass also has amalgamation by the same amalgam.
For each $\mu$, $\bK_{H,\mu}$ is to $\bK^H_0$ as $\bK_\mu$ is to $\bK_0$
(Definition~\ref{defdelrank}).

We obtain a linear space by interpreting $R$ as collinearity. Two points
determine a line as $R(a,b,c)\wedge R(a,b,c) \wedge\neg R(a,b,d)$ makes
$\delta(\{a,b,c,d\}) =2 < \delta(\{b,c,d\})$. Since any non-trivial
$0$-primitive over a two element set contains $3$ non-collinear points, (*)
implies the algebraic closure of two points is the third point on the line
they determine. Thus there are two quantifier-free configuration of three
points: dependent, independent. Since, by $(*)$, both configurations are
strong in the generic, they determine by homogeneity, as in
Lemma~\ref{gettrans}, the two possible $3$-types. Similarly property $(*)$ of
this Hrushovski example makes it a Steiner triple system\footnote{This
example will not permit lines with longer length by modifying $\mu$. As,
there can be no $4$-clique, $\ell$, since with the Hrushovki definition
$\delta(\ell) =0$ while $\delta$ of two points is $2$.}.}
\end{example}




%

Here we write cycle since we are dealing with a Steiner-triple cycle and no
path can be a proper pseudo-cycle as opposed to a cycle.

\begin{fact}\label{Huniform} For any $\mu$ and any $(M,R) \models T_{H,\mu}$,
$(M,R)$ is a strongly minimal uniform Steiner triple system. In fact, the
algebraic closure of any pair is the third point on the line through $a,b$
and so each cycle is infinite.
\end{fact}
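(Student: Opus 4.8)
The statement is essentially a corollary of Example~\ref{exhru} together with Lemma~\ref{gettrans}, so the plan is to package the observations already made in the example into a clean argument. First I would recall that for any $\mu$ (in the relevant class for the $\delta_H$-construction), $T_{H,\mu}$ is strongly minimal: this is exactly Hrushovski's point in \cite[\S 5.2]{Hrustrongmin}, and the amalgamation needed is the one already noted in Example~\ref{exhru} (the basic Hrushovski amalgam adds no edges, so it stays inside $\bK^H_0$). Then I would invoke the defining property $(*)$ of $\bK^H_0$: every subset of size $\le 3$ of a structure in $\bK^H_0$ is strong. In particular, for any $a,b$ in a model $(M,R)\models T_{H,\mu}$, the pair $\{a,b\}$ is strong in $M$.

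Next I would pin down $\acl_M(a,b)$. A $0$-primitive extension of a two-element set must contain two intersecting lines, hence three non-collinear points (this is the same remark used in the proof of Lemma~\ref{infacl}); but $(*)$ says every three-element subset is strong, so no nontrivial $0$-primitive extension of $\{a,b\}$ exists inside a member of $\bK^H_0$. Consequently $\icl_M(a,b)$ is just the line $\overline{ab}$ — which in this Steiner triple system is $\{a,b,c\}$ where $c$ is the third point — and $\acl_M(a,b)=\icl_M(a,b)=\overline{ab}$, a finite set. Since $\acl_M(a,b)-\icl_M(a,b)=\emptyset$, the domain of the cycle graph $G_M(a,b)$ equals $M-\icl(a,b)=M-\acl(a,b)$, and by Lemma~\ref{disjointness}(1) every path generated by a point off $\acl(a,b)$ is infinite; equivalently, there are no finite pseudo-cycles, so every cycle is infinite.

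Finally, uniformity: since every two-element set is strong in $M$, Lemma~\ref{gettrans}(1) gives that $\mathrm{Aut}(M,R)$ acts $2$-transitively on $(M,R)$, and then Lemma~\ref{gettrans}(2) yields that $G_M(a,b)\simeq G_M(a',b')$ for all pairs, i.e. $(M,R)$ is uniform. The main (and really the only) obstacle is making sure the hypotheses of Lemma~\ref{gettrans} genuinely apply here — that is, that $T_{H,\mu}$ is "generated by a Hrushovski class of linear spaces" in the sense of that lemma, and that $(*)$ indeed forces $A\le M$ for every two-element $A$. Both are immediate from Example~\ref{exhru}, so the proof is short; the only care needed is to note that $q=3$ here, so "pseudo-cycle" collapses to "cycle" and the path analysis of Section~\ref{indepcase} applies verbatim with domain $M-\acl(a,b)=M-\icl(a,b)$.
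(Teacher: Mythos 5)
Your proposal is correct and follows essentially the same route as the paper: $(*)$ forces every pair and triple to be strong, whence $\acl(a,b)=\overline{ab}=\{a,b,c\}$, two-transitivity via Lemma~\ref{gettrans} gives uniformity, and any finite pseudo-cycle would have to lie in $\acl(ab)$, which is too small to contain one. One minor imprecision worth fixing: since $\{a,b\}$ is itself strong by $(*)$, $\icl(a,b)=\{a,b\}$ rather than the full line, and your claim that no nontrivial $0$-primitive extension of $\{a,b\}$ exists overlooks the collinear extension of type $\boldsymbol{\alpha}$ (the third point $c$ with $R(a,b,c)$) --- the accurate statement is that any $0$-primitive extension other than by collinear points would contain three non-collinear points and so contradict $(*)$; neither slip affects your conclusion.
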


\begin{proof} As noted in the description of Example~\ref{exhru}, in $(M,R)$
the algebraic closure of a pair is the line through them.  Since there are
only two $3$-types of tuples extending $(a,b)$, any two $d_i$ that are not on
the line $ab$ are isomorphic over $a,b$ and thus the cycles they generate are
isomorphic. The last claim is immediate since all points not on the line are
automorphic over $ab$. Since any potential finite pseudo-cycle over $a,b$ is
in $\acl(ab) = \{a,b,c\}$, where $R(a,b,c)$, there are no finite
pseudo-cycles.
\end{proof}
%




\subsection{$2$-transitive $q$-Steiner systems; Changing $\bK_0$ and $\bU$}\label{finfincyc}
\numberwithin{theorem}{subsection} \setcounter{theorem}{0}

 We turn to a different method\footnote{This approach of restricting
primitives over very small sets
to establish various amounts of transitivity of the non-Desguaresian plane
appears in \cite{Hrustrongmin,Baldwinautpp}.}
 to obtain  uniformity results for Steiner $q$-systems for any prime power
$q\geq 3$ and to restrict the number of  finite cycles. We combine  a variant
of the Hrushovki's Example~\ref{exhru} with modifying $\mu$ to control
  a second fundamental
 invariant: number of cycles.

  \begin{definition}\label{defKB}
   We write $\bK^J_0$ for the class of linear spaces such that
$$ (**) \ |B|\leq 2 \  \text{implies}\  B\leq A$$ for every finite linear space $A\in \bK^2_0$  containing $B$.
We write $\bK^J_{\mu'',V}$ for the class determined by $**$, $\mu'' \in
\Bscr$  (Definition~\ref{defbscr}) and , a Mikado variety of quasigroups $V$.
%
\end{definition}

%
%

%
%

As in Example~\ref{exhru}, $(**)$ and Lemma~\ref{gettrans} imply every two
element set is strong, so each model is $2$-transitive. There are two
differences from Example~\ref{exhru}: i) the strong substructure notion is
with respect to the $\delta$ in \cite{BaldwinPao} and so we can vary the line
length; ii) we don't kill  the entire (non-trivial) algebraic closure of each
2-element set but explicitly forbid only the finite cycles. We note below
that we can allow finitely many cycles over each pair $(a,b)$.

\begin{theorem}\label{restrictk0}
If $\mu\in \Bscr$ (Definition~\ref{defbscr}), $$\bK^J_{\mu'',V}$$ has
amalgamation,
 the generic (and hence every  model)  has no finite
 paths and is $2$-transitive so the path graph is uniform.
\end{theorem}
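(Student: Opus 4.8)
The plan is to combine the two independent modifications already established in the excerpt, checking that they do not interfere with each other. The class $\bK^J_{\mu'',V}$ is obtained from the basic linear-space class of \cite{BaldwinPao} by (a) imposing the restriction $(**)$ that every set of size $\leq 2$ is strong, exactly as in Example~\ref{exhru} but with the geometric $\delta$ of \cite{BaldwinPao} rather than Hrushovski's $\delta_H$, and (b) passing to $\mu'' \in \Bscr$, i.e. setting $\mu''(\boldsymbol{\gamma_n}) = 0$ for every pseudo-cycle type $\boldsymbol{\gamma_n}$, together with the $\tau'$-apparatus ($H$ as the graph of a Mikado quasigroup $V$) as in Section~\ref{consquasi}. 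So the proof has three parts: amalgamation, absence of finite paths, and $2$-transitivity (hence uniformity via Lemma~\ref{gettrans}.2).

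First I would handle amalgamation. The key observation, as in Example~\ref{exhru}, is that the defining condition $(**)$ is preserved under the \emph{canonical} (disjoint, no-new-line) amalgam $G = E \oplus_D F$: if $B \subseteq G$ has $|B| \leq 2$, then $\delta(B'/B) \geq 0$ for all $B \subseteq B' \subseteq G$ follows because any $B'$ either lies inside $E$ or $F$ (where $(**)$ already holds) or straddles them, in which case submodularity of $\delta$ together with $D \leq E$, $D \leq F$ forces $\delta(B'/B) \geq 0$; the point is that the canonical amalgam adds no edges between $E - D$ and $F - D$ except through lines based in $D$, and those do not decrease $\delta$. This is the same amalgam used in \cite[Lemma 5.10--5.11]{BaldwinPao}. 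The $\mu''$-bound and the $H$-relation are handled exactly as in Lemma~\ref{apmu2}: the separate argument there shows that no pseudo-cycle type $\boldsymbol{\gamma_n}$ is created in $G$ (each $c_i$ in a pseudo-cycle sits on separate $R$-triples with $a$ and with $b$, which forces the whole pseudo-cycle into one of $D$, $E$, $F$, contradicting primitivity of $E$ over $D$), and the argument that $H$ extends to $G$ and that the remaining good pairs ($BC \cong DE$ with a copy of $C$ over $B$ already in $F$) can be amalgamated by identification is verbatim from the proof of Theorem~\ref{getsmquasigrp}. Thus $(\bK^J_{\mu'',V}, \leq')$ has $\leq'$-amalgamation and $\leq'$-joint embedding, so Fact~\ref{KL} yields a countable generic with a strongly minimal theory (the strong-minimality step running exactly as in \cite[Lemmas 5.21, 5.23]{BaldwinPao}, unaffected by restricting $\bK_0$).

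Next, no finite paths: since $\mu'' \in \Bscr$ kills every pseudo-cycle type, the argument of Lemma~\ref{apmu2} applies unchanged to show that for every $M \models T^q_{\mu'',V}$, every $a,b \in M$, and every $d_1 \notin \icl(ab)$, the path $P_{abd_1}$ is infinite; here $(**)$ makes this even cleaner since $\icl(ab)$ is controlled, but the essential content is the omission of the $\boldsymbol{\gamma_n}$. Finally, $2$-transitivity: by $(**)$ every two-element subset $A$ satisfies $A \leq M$, so Lemma~\ref{gettrans}.1 applies (its hypothesis is precisely that every two-element set is strong) and $\mathrm{Aut}(M,*,R)$ acts $2$-transitively on $(M,R)$; then Lemma~\ref{gettrans}.2 gives that $(M,*,R)$ is uniform. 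I expect the main obstacle to be bookkeeping in the amalgamation step --- specifically, verifying simultaneously that $(**)$ is preserved, that no $\boldsymbol{\gamma_n}$ is created, and that the $\mu''$-identifications required for good pairs other than the $\boldsymbol{\gamma_n}$ are still available; but each of these has already been carried out in isolation (in Example~\ref{exhru}/Fact~\ref{Huniform} and in Lemma~\ref{apmu2} respectively), and since the canonical amalgam is the common underlying construction in all of them, combining them introduces no genuinely new difficulty. As with the earlier results, varying $\mu''$ over $\Bscr$ produces $2^{\aleph_0}$ distinct such families, and the remark that finitely many cycles over each pair may be allowed follows from the $\mu^X$ variant of Theorem~\ref{BPuniform}.
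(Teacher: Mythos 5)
Your proposal is correct and follows essentially the same route as the paper: the paper's proof simply says the amalgamation follows \emph{mutatis mutandis} from Lemma~\ref{apmu2}, that $(**)$ gives $2$-transitivity via Lemma~\ref{gettrans} (hence uniformity), and that finite paths are blocked because $\mu''\in\Bscr$. Your write-up supplies more detail on why $(**)$ survives the canonical amalgam and why the two modifications do not interfere, but the underlying argument is the same.
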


\begin{proof}
The amalgamation follows  {\em mutatis mutandis} from Lemma~\ref{apmu2}.
Note that $(**)$ implies every two element set is strong, so each model is
$2$-transitive. This holds in every model by Lemma~\ref{gettrans}; hence
$G_M(a,b)$ is uniform. Finite paths are blocked, since $\mu \in \Bscr$.
\end{proof}

As we modified Lemma~\ref{BPuniform}, we modify the proof of
Theorem~\ref{restrictk0} to get:


\begin{theorem}\label{BPuniform1} If $\mu'' \in \Bscr$  then for any variety $V$
and for any model $(M,*,R)$ of $T^q_{J,\mu'',V}$ and any $(a,b)$, both
$\acl_M(\emptyset) = \emptyset$ and $(M,*,R)$ is uniform.

Further, for any finite set  $X$ of pairs, $(n_i,m_i)$  with $n_i$ divisible
by $4$, we can construct a theory $T^J_X$ such that if $(M,*,R) \models T_X$
and $(a,b) \in M$, $G_m(a,b)$ has $m_i$ cycles of length $n_i$.

\end{theorem}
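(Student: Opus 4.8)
The proof has two parts, and both are designed to be direct descendants of arguments already assembled in the excerpt. For the first part, the claim that $\acl_M(\emptyset) = \emptyset$ and that $(M,*,R)$ is uniform: uniformity (in fact $2$-transitivity of the automorphism group on $(M,R)$) follows verbatim from Theorem~\ref{restrictk0} via Lemma~\ref{gettrans}, since $(**)$ guarantees every two-element set is strong in every model, and strong minimality upgrades this from the generic to all models. For $\acl_M(\emptyset)=\emptyset$, I would argue that a nonempty $0$-primitive extension $C/\emptyset$ must contain two intersecting lines, hence three non-collinear points (this is exactly the observation used in the proof of Lemma~\ref{infacl}); but any such $C$ contains a two-element subset $B$ with $\delta(C/B) < 0$ — equivalently, one of the $0$-primitive-over-$\emptyset$ configurations would force a two-element set inside $C$ to fail $B \leq C$, contradicting $(**)$ exactly as in Fact~\ref{Huniform} where the analogous conclusion was drawn for $\bK^H_0$. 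So the restriction defining $\bK^J_0$ already kills $\acl(\emptyset)$, and no extra work on $\mu''$ is needed for this clause beyond $\mu'' \in \Bscr$.

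For the second part — prescribing, for a finite set $X = \{(n_i,m_i)\}$ with each $n_i$ divisible by $4$, a theory $T^J_X$ whose models have exactly $m_i$ cycles of length $n_i$ over each pair $(a,b)$ — the plan is to combine the $\bK^J_0$ restriction with a targeted choice of $\mu$ rather than the blanket $\mu'' \in \Bscr$. Recall from Remark~\ref{towers} and the discussion following Definition~\ref{pathgraph3} that the $n$-pseudo-cycles $\boldsymbol{\gamma}_n$ (for $n$ divisible by $4$) are $0$-primitive good pairs over a two-element base, and that the number of disjoint copies of $\boldsymbol{\gamma}_n$ over $(a,b)$ realized in a model of $T_\mu$ is controlled precisely by $\mu(\boldsymbol{\gamma}_n)$. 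So I would define $\mu^X$ by setting $\mu^X(\boldsymbol{\gamma}_{n_i}) = m_i$ for the finitely many designated lengths, $\mu^X(\boldsymbol{\gamma}_n) = 0$ for all other $n$ divisible by $4$, and leaving $\mu^X$ at its generic values on all good pairs that are not pseudo-cycles. This is the exact analogue of the move from Lemma~\ref{apmu2} to Theorem~\ref{BPuniform} (and from Theorem~\ref{restrictk0} to this theorem), where "for every $n$, set to $0$" is replaced by "for $n \in X^c$, set to $0$; for $n = n_i$, set to $m_i$."

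The amalgamation argument then goes through \emph{mutatis mutandis} from the proof of Lemma~\ref{apmu2}: the only good pairs whose $\mu$-value is below $\delta_{\tau'}$ of the base are the pseudo-cycles $\boldsymbol{\gamma}_n$, and the separate combinatorial argument given there — that a $\boldsymbol{\gamma}_n$ appearing in a free amalgam $G = E\oplus_D F$ must already appear in $E$, $F$, or $D$, using that each $c_i$ in a pseudo-cycle lies on a separate $R$-triple with each of $a$ and $b$, together with $R$-closure of $F$ and primitivity of $E$ over $D$ — applies unchanged regardless of whether the target value is $0$ or $m_i$, because the obstruction is about \emph{where} a copy can be created by amalgamation, not about how many are permitted. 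The $2$-transitivity clause $(**)$ is orthogonal to all of this and is preserved exactly as in Theorem~\ref{restrictk0}. The main obstacle, and the one point deserving genuine care, is verifying that prescribing $\mu^X(\boldsymbol{\gamma}_{n_i}) = m_i$ actually \emph{forces} exactly $m_i$ cycles of length $n_i$ over \emph{every} pair $(a,b)$ rather than merely bounding their number above: here I would invoke that in the generic model every good pair with $\mu$-value $\geq$ its lower bound is realized the maximal permitted number of times over every strong base (the "everything that can happen does" genericity), that $(**)$ makes every $(a,b)$ a strong base, and that a length-$n_i$ pseudo-cycle over $(a,b)$ is precisely a copy of $\boldsymbol{\gamma}_{n_i}$ — and since strong minimality makes this count a property of the complete theory, it transfers from the generic to all models; I would cite Lemma~\ref{disjointness} and the remark after Definition~\ref{pathgraph3} to pin down that distinct minimal pseudo-cycles of a given length over $(a,b)$ are genuinely disjoint over $\icl(ab)$, so the count $m_i$ is unambiguous.
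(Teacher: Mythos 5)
Your proposal is correct and follows essentially the same route the paper intends: the paper gives no explicit proof of Theorem~\ref{BPuniform1}, saying only that one modifies the proof of Theorem~\ref{restrictk0} in the same way that Theorem~\ref{BPuniform} modified Lemma~\ref{apmu2}, and your plan (using $(**)$ to kill $0$-primitives over sets of size $\leq 2$ and hence $\acl(\emptyset)$, obtaining uniformity from $2$-transitivity via Lemma~\ref{gettrans}, and setting $\mu^X(\boldsymbol{\gamma}_{n_i})=m_i$ and $0$ on the other pseudo-cycles so that the separate amalgamation argument for pseudo-cycles plus genericity forces exactly $m_i$ cycles of length $n_i$ over every pair) is precisely that modification, worked out in more detail than the paper provides.
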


\section{Questions}\label{questions}
We close by suggesting some more traditional combinatorial questions
suggested by the examples here.

\begin{question}\label{qg1}{\em We have studied path graphs in strongly minimal $q$-Steiner systems
 induced by quasigroups.
But our definition has no reliance on strong minimality, although our
arguments do.} What can be learned by more traditional combinatorial methods
about the structure of path graphs in arbitrary {\em finite or infinite}
$q$-Steiner systems
 induced by quasigroups?
 \end{question}

 \begin{question}\label{qg2} {\rm We built strongly minimal quasigroup that induce
 $q$-Steiner systems using the $\mu$-function only with respect to collinearity.
 Suppose one moves closer to
 the setting of \cite{HorsleyWebb}. }  Can one construct   an infinite quasigroup
 by considering finite quasigroups from a Mikado variety
 (Definition~\ref{rkdef}) while omitting specified finite configurations as
 in  \cite{HorsleyWebb}? It seems each case would require its own variant on
 amalgamation.
 Is there a way to recover the local
finiteness of the generic as in \cite{BarbinaCasa}? If so, what is the model
theoretic
 complexity of the resulting theory?
 \end{question}

%
%



The last question depends on understanding the Lenz-Barloti classification.

\begin{question}\label{qg3} {\em In \cite{Baldwinasmpp} (using the methods of Section~\ref{finfincyc})
  a Morley rank $2$
$\aleph_1$-categorical non-desarguesian projective planes  is coordinatized
by a ternary ring that is not linear. The non-linearity means that while the
quasi-groups for both addition and multiplication are definable, they cannot
be composed to give the ternary $t(x,y,z) = xy + z$ that arises in a division
ring.  That is, the plane is at the lowest level in the Lenz-Barlotti
hierarchy.} Could similar but less radical surgery  yield
$\aleph_1$-categorical non-desarguesian projective planes that are higher in
that hierarchy? \end{question}


\end{document}